\title{Diffeomorphisms of 4-manifolds from graspers}
\author{Danica Kosanovi\'c}
\address{ETH Z\"urich, Department of Mathematics, R\"amistrasse 101, 8092 Z\"urich, Switzerland}
\curraddr{Universität Bern, Mathematisches Institut (MAI), Alpeneggstrasse 22, 3012 Bern, Switzerland}
\email{danica.kosanovic@unibe.ch}
\date{\today}
\begin{document}

\begin{abstract}
    We relate degree one grasper families of embedded circles to various constructions of diffeomorphisms found in the literature -- theta clasper classes of Watanabe, barbell diffeomorphisms of Budney and Gabai, and twin twists of Gay and Hartman.
    We use a ``parameterised surgery map'' that for a smooth 4-manifold $M$ takes loops of framed embeddings of $S^1$ in the manifold obtained by surgery on some 2-sphere in $M$, to the mapping class group of $M$.
\end{abstract}

\maketitle

\section{Introduction}\label{sec:intro}

Recently there has been remarkable progress in the study of homotopy groups of $\Diffp(\M)$, the topological group of diffeomorphisms of a smooth 4-manifold $\M$ that are the identity near (possibly empty) boundary. In particular, $\pi_0\Diffp(\M)$ is the smooth mapping class group of~$\M$.

Firstly, Watanabe~\cite{Watanabe-dim-4} constructed nontrivial classes in homotopy groups $\pi_n\Diffp(\D^4)$ for many $n\geq1$, and a potentially nontrivial \emph{theta class} $\Theta\in\pi_0\Diffp(\D^4)$. For these constructions he was inspired by \emph{clasper surgery} from Gusarov--Habiro approach to Vassiliev theory of classical knot invariants. Watanabe thus disproved the 
generalised Smale conjecture: $\Diffp(\D^d)$ is not contractible for $d=4$, even though it is for $d=1,2,3$ (by Smale~\cite{Smale} and Hatcher~\cite{Hatcher-diff}). However, the question of nontriviality of the mapping class group $\pi_0\Diffp(\D^4)$ remains open.

Secondly, Budney and Gabai~\cite{Budney-Gabai} found an infinite set of linearly independent classes in the abelian groups $\pi_0\Diffp(\D^3\times\S^1)$ and  $\pi_0\Diffp(\S^3\times\S^1)$. Moreover, they gave a general recipe for constructing diffeomorphisms of 4-manifolds, called \emph{barbell diffeomorphisms}. Another work of Watanabe~\cite{Watanabe-theta} followed, where also infinitely many elements in $\pi_0\Diffp(\D^3\times\S^1)$ are given, as well as in $\pi_0\Diffp(\Sigma\times\S^1)$, where $\Sigma$ is the Poincar\'e homology 3-sphere.

Finally, Gay~\cite{Gay} constructed an infinite list of candidate classes in $\pi_0\Diffp(\D^4)$ called \emph{Montesinos twin twists}, but together with Hartman~\cite{Gay-Hartman} they showed that this list reduces up to isotopy to at most one nontrivial element, which is 2-torsion.

Moreover, Gay~\cite{Gay} (using Cerf theory) and later Krannich--Kupers~\cite{Krannich-Kupers} (using results of Quinn and Kreck) give a general procedure for constructing classes in $\pi_0\Diffp(\M)$, which they show exhausts the whole group in the case $\M=\D^4$. Similar constructions -- which we propose to call \emph{parameterised surgery} -- have been used elsewhere, for example in~\cite{Wall-diffeos,Cappell-Shaneson,Budney-Gabai}. 

In this paper we study the following version: for any smooth 4-manifold $\M$ and a framed embedded 2-sphere $\nu S\colon\nu\S^2=\S^2\times\D^2\hra \M$, \emph{parameterised surgery of index one} is the map
\begin{equation}\label{eq-intro:ps-S}
\begin{tikzcd}
    \ps_{\nu S}\colon \pi_1(\Emb(\nu\S^1,\M_{\nu S});\nu c) \rar{\delta_{\nu c}} & \pi_0\Diffp(\M\sm\nu S) \rar{\cup\Id_{\nu S}} & \pi_0\Diffp(\M).
\end{tikzcd}
\end{equation}
Here $\M_{\nu S}\coloneqq (\M\sm\nu S)\cup_{\partial\nu S} \nu c$ is the surgery on $\nu S$, for $\nu c\cong\S^1\times\D^3$.
The map $\delta_{\nu c}$ is given by ambient isotopy extension: lift a loop of framed $\S^1\hra \M_{\nu S}$ based at $\nu c$ to a path of diffeomorphisms of $\M_{\nu S}$ (so this is an ambient isotopy extending the given isotopy of framed circles), and restrict the endpoint diffeomorphism to the complement of $\nu c$ (which it fixes, by construction). One could say $\delta_{\nu c}$ is ``circle pushing''. The map $\cup\Id_{\nu S}$ in \eqref{eq-intro:ps-S} is the extension by the identity over $\nu S$.

If $S$ is unknotted (bounds an embedded 3-ball), then $\M_{\nu S}\cong \M\#\,\S^3\times\S^1$, and we use the notation
\begin{equation}\label{eq-intro:ps}
\begin{tikzcd}
    \ps\colon \pi_1(\Emb(\nu\S^1,\M\#\,\S^3\times\S^1);\nu c) \rar & \pi_0\Diffp(\M).
\end{tikzcd}
\end{equation}
This paper explores connections between all mentioned constructions of (isotopy classes of) diffeomorphisms, using the maps $\ps_{\nu S}$ and knotted families of circles constructed using \emph{graspers} in our previous work \cite{KT-highd,K-Dax}. In Theorem~\ref{thm-intro:W-BG} we explicitly relate graspers to Watanabe's theta classes and to Budney--Gabai's barbell diffeomorphisms, in arbitrary 4-manifolds. All of them are then related to Gay's twists in Theorem~\ref{thm-intro:Gay-twist}. For $\M=\S^4$ we show in Corollary~\ref{cor-intro:final-S4} that many existing constructions of diffeomorphisms reduce to a single 2-torsion class depicted in Figure~\ref{fig-intro:Tf}. It remains open whether this class is nontrivial. The candidate diffeomorphism of $\S^4$ from the more recent work \cite[Conj.7.3]{GGHKP} is not considered here.
\begin{figure}[!htbp]
    \centering
    \includegraphics[width=0.92\linewidth]{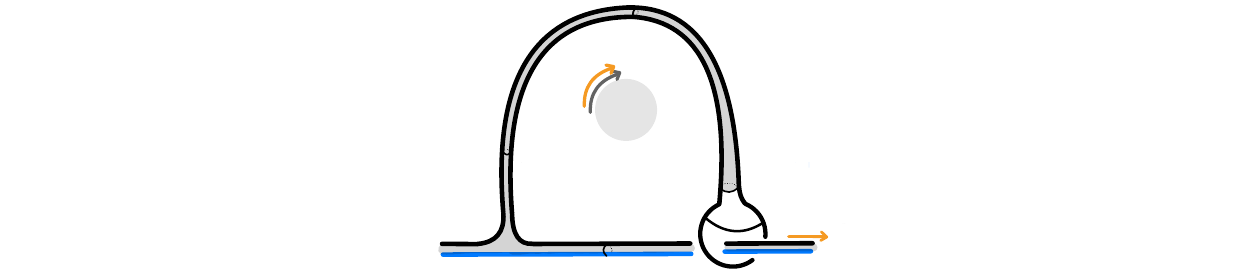}
    \caption{
        The embedded torus $T_{\prealmap(h)}\colon\S^1\times\S^1\hra \M\#\S^1\times\S^3$ is the connect-sum of a thin torus containing the blue $c=\S^1\times\{pt\}$, with a meridian sphere for $c$, along a guiding arc going around $h\in\pi_1(\M\#\S^1\times\S^3)$. The class $\ps({\prealmap(h)})\in\pi_0\Diff(\M)$ is the Gay twist $G(\Sigma_{\prealmap(h)})$ on the circle bundle $\Sigma_{\prealmap(h)}=\partial(\nu T_{\prealmap(h)})$, and then surger $c$ out. Roughly speaking, $G(\Sigma_{\prealmap(h)})$ does Dehn twists on the curves that guide the movement of $c$ around $T_{\prealmap(h)}$. The 2-torsion class $\ps({\prealmap(h)})\in\pi_0\Diff(\S^4)$ is the case $\M=\S^4$ and $h=\bc$.
    }
    \label{fig-intro:Tf}
\end{figure}

Diffeomorphisms that are in the image of $\ps$ are all pseudo-isotopic to the identity; see Proposition~\ref{prop:phc}. We briefly mention an important result of Singh~\cite{Singh}, who showed that there are infinitely many elements in $\pi_0\Diffp(\S^1\times\S^2\times[0,1])$, all pseudo-isotopic to the identity, and detected by the Hatcher--Wagoner obstruction. Earlier, such infinite collections were detected using gauge theory in $\pi_0\Diffp(\#^{2n}\mathbb{C} P^2\#^{10n+1}\ol{\mathbb{C} P^2})$ for $n\geq2$ by Ruberman~\cite{Ruberman99}.

\subsection{General 4-manifolds}
Let $\X$ be any oriented smooth 4-manifold 
and $c\colon\S^1\hra \X$ a smooth embedding, whose homotopy class we denote by $\bc\in\pi_1\X\coloneqq\pi_1(X;c(e))$. Let $\Z[\pi_1\X]$ be the free abelian group generated by the set $\pi_1\X$.
There is a group homomorphism 
\begin{equation}\label{eq-intro:r}
    \prealmap\colon{\Z[\pi_1\X]}
    \ra\pi_1(\Emb(\S^1,\X); c)
\end{equation}
that sends $h\in\pi_1\X$ to the \emph{simple grasper family with group element $h$}, shown in Figure~\ref{fig-intro:families}(i). We explain this as follows. Firstly, Figure~\ref{fig-intro:families}(ii) depicts the corresponding \emph{simple grasper}: the horizontal line union the point at infinity represents $\prealmap(h)_0=\prealmap(h)_1=c\colon\S^1\hra \X$, the two spheres are meridians $m_0,m_1$ to $c$ at two points $p_0,p_1$, and the concatenation of the bar between them followed by $c|_{[p_0,p_1]}^{-1}$ represents $h$. The grasper family $\prealmap(h)_s\colon\S^1\hra \X$ for $s\in[0,1]$ takes a piece of $c$ near $p_0$, drags it along the bar, and then twirls it around $m_1$, before going back; see Figure~\ref{fig-intro:families}(i). The orange arrow represents possible appearance of the generator $\bc$ in $h\in\pi_1\X$.
\begin{figure}[!htbp]
    \centering
    \includegraphics[width=0.88\linewidth]{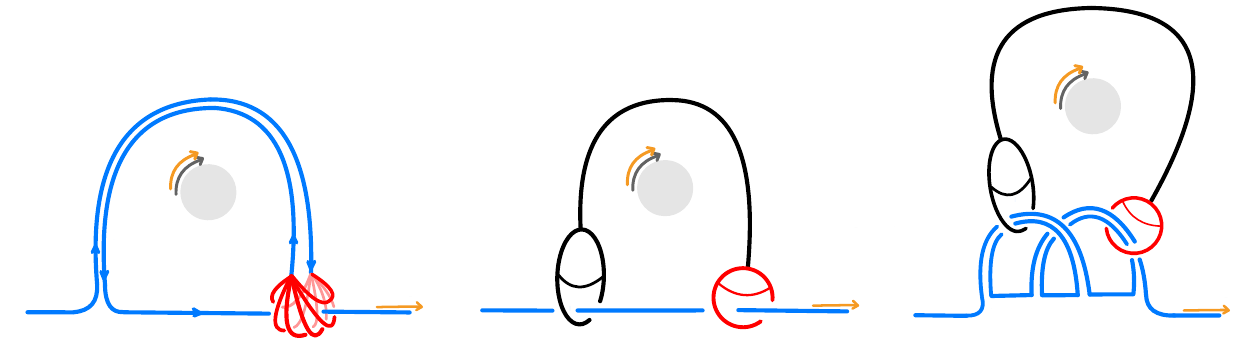}
    \caption{
        (i)~To get $\prealmap(h)_t\colon\S^1\hra X$ connect-sum the tip of the blue finger into each of the red arcs, that foliate a meridian sphere to $c$. 
        (ii)~The simple grasper for $\prealmap(h)$.
        (iii)~The semisimple grasper for $\sref^\circlearrowright_\circlearrowright(h)$.
    }
    \label{fig-intro:families}
\end{figure}

The map $\prealmap$ was defined in \cite{Gabai-disks} and in our work with Peter Teichner \cite{KT-highd}.
Following the work of Dax \cite{Dax}, we studied the kernel $\ker(\prealmap)$ in \cite{K-Dax} (see also Section~\ref{subsec:dax}). 
Continuing on this, in~\cite{K-Dax2} we study framed embeddings $\pi_1(\Emb(\nu\S^1,\X); \nu c)$, and show how the forgetful map $\Emb(\nu\S^1,\X)\to\Emb(\S^1,\X)$ behaves on $\pi_1$, depending on the second Stiefel--Whitney number $w_2(X)\in H^2(X;\Z/2)$. In particular, if $X$ is spin then this is a surjection with split kernel $\Z/2$. 


Let now $\M$ be a 4-manifold and $\nu S\colon\nu\S^2\hra \M$ a framed embedding. The first main result of this paper concerns cases when \emph{$S$ is unknotted}, so that the surgery on it gives $\M_{\nu S}\cong \M\#\S^1\tm\S^3$. This has fundamental group the free product of $\pi_1\M$ and $\Z$, whose generator we denote by $t$. We compose the maps~\eqref{eq-intro:ps} and~\eqref{eq-intro:r} into
\begin{equation}\label{eq-intro:r-ps}
\begin{tikzcd} 
    \Z[\pi_1\M\ast\Z]\rar{\prealmap} & \pi_1(\Emb(\nu\S^1,\M\#\S^1\tm\S^3);\nu c) \rar{\ps} & 
    \pi_0\Diffp(\M).
\end{tikzcd}
\end{equation} 
We recast, in terms of the map $\ps\circ\prealmap$, Watanabe's clasper classes~\cite{Watanabe-dim-4} in Section~\ref{sec:W} and Budney--Gabai barbell classes~\cite{Budney-Gabai} in Section~\ref{sec:BG}. Let us briefly recall that Watanabe's class
\[
    \wat(\Theta_{g_1,g_2})\in\pi_0\Diffp(\M)
\]
depends on an embedding $\Theta_{g_1,g_2}$ of the theta graph into $\M$. Since $\Theta_{g_1,g_2}$ is isotopic to a wedge of circles, it is determined by a pair $g_1,g_2\in\pi_1M$. 
On the other hand, Budney--Gabai's class 
\[
    \bg(\barbemb_{S_1,S_2,\W})\in\pi_0\Diffp(\M)
\]
depends on an embedding $\barbemb_{S_1,S_2,\W}$ into $\M$ of the model barbell $\barbell\coloneqq\S^2\times\D^2\natural\,\S^2\times\D^2$. This is determined by two embedded spheres $\barbemb S_1$ and $\barbemb S_2$, called the \emph{cuffs}, and an arc connecting them in their complement, called the \emph{bar}; see Figure~\ref{fig-intro:barbell}. We define $\bx,\by\in\pi_1(\M\sm\nu\barbemb(S_1\sqcup S_2))$ as the homotopy classes of the meridians of $\barbemb S_1$ and $\barbemb S_2$ respectively, and the \emph{bar word} $\W\in\pi_1(\M\sm\nu\barbemb(S_1\sqcup S_2))$ as the element determined by the bar (see Definition~\ref{def:bar-word-and-element}). When $\barbemb S_1$ is unknotted, we also define the \emph{bar group element} in $\pi_1(\M\sm\nu \barbemb S_1)\cong\pi_1\M\ast\Z$ by setting $\by=1$ and $\bx=t$ in the bar word $W$. 

\begin{mainthm}[{Theorem~\ref{thm:Theta}, Corollary~\ref{cor:Wat-implant}}]
\label{thm-intro:W-BG}
    For a smooth 4-manifold $\M$ and $g_1,g_2\in\pi_1\M$ we have
    \[
        \wat(\Theta_{g_1,g_2}) \;=\;
        \ps\circ\prealmap\big(g_1g_2^{-1}tg_2+(g_1g_2^{-1}tg_2)^{-1}\big)\;=\;
        \bg(\barbemb_{\by g_1 g_2^{-1} \bx g_2})
        \quad \in\pi_0\Diffp(\M),
    \]
    where the barbell $\barbemb_{\by g_1 g_2^{-1} \bx g_2}\colon\barbell\hra \M$ has both cuffs unknotted and the bar word 
    $\W=\by g_1 g_2^{-1} \bx g_2$.
\end{mainthm}

In fact, the proof of Theorem~\ref{thm-intro:W-BG} consists of several intermediate results.
\begin{itemize}
    \item(Section~\ref{subsec:selfref}) 
    For any $h\in\pi_1\M\ast\Z$ we define \emph{semisimple grasper} families $\sref^\circlearrowright_\circlearrowright(h)$ and $\sref^\circlearrowright(h)$ in $\pi_1(\Emb(\nu\S^1,\M\#\,\S^3\times\S^1);\nu c)$; see Figure~\ref{fig-intro:families}(iii). 
    \item(Corollary~\ref{cor:Theta}) 
    For any $g_1,g_2\in\pi_1\M$, if we denote $h=g_1g_2^{-1}tg_2$ then
        \begin{equation}\label{eq-intro:wat}
            \wat(\Theta_{g_1,g_2})=\ps\circ\sref^\circlearrowright_\circlearrowright(h).
        \end{equation} 
    \item(Proposition~\ref{prop:half-unknotted})
    For a barbell $\barbemb_{\W}$ with unknotted cuffs and bar word $\W=\prod_{i=1}^rf_i\by h_i$, for some $f_i,h_i\in\pi_1\M\ast\Z$, if we denote         $\bw_i=\prod_{j=1}^if_jh_j$ and $h= \bw_{n} \sum_{i=1}^rf_i^{-1} \bw_{i-1}^{-1}$, then
        \begin{equation}\label{eq-intro:bg}
                \bg(\barbemb_{\W})=\ps\circ \sref^\circlearrowright(h).
        \end{equation} 
    \item(Theorem~\ref{thm:selfref}) 
    For any $h\in\pi_1\M\ast\Z$ we have
        \begin{equation}\label{eq-intro:self-ref}
            \sref^\circlearrowright_\circlearrowright(h)
            =\sref^\circlearrowright(h)
            =\prealmap(h+h^{-1}).
        \end{equation} 
\end{itemize}
Knotted families similar to $\prealmap$ and $\sref^\circlearrowright$ have been defined by Gabai in~\cite{Gabai-disks}, and their relation to barbell diffeomorphisms have been also considered by Budney and Gabai, see for example~\cite[Constr.~5.25]{Budney-Gabai}. They also outline connections to Watanabe's classes~\cite[Rem.\ 5.26]{Budney-Gabai}. 

A remarkable result of Budney and Gabai is that diffeomorphisms obtained from a particular class of barbells $\delta_m\colon\barbell\hra \D^3\times\S^1$, $m\geq4$, form an \emph{infinitely generated free abelian subgroup of $\pi_0\Diffp(\D^3\times\S^1)$}, which also includes into $\pi_0\Diffp(\S^3\times\S^1)$. They use this to show that $\bg(\delta_m)(\{e\}\times\D^3)\in\pi_0\Embp(\D^3,\D^3\times\S^1)$ are linearly independent classes, in contrast to $\pi_0\Embp(\D^2,\D^2\times\S^1)\cong\pi_1\Embp(\D^1,\D^3)\cong1$ that is due to Hatcher~\cite{Hatcher-diff}.

Barbells $\delta_m$ are defined in \cite[Constr.\ 6.11]{Budney-Gabai} and depicted in Figure~\ref{fig-intro:barbell}. Here we let $g$ denote the generator of $\pi_1(\D^3\times\S^1)\cong\Z$, and define $\delta_m$ to have both cuffs unknotted and the bar word
        \[
        \W=g\by g^{m-3}\bx g^2\in\pi_1(\D^3\times\S^1\sm\nu\barbemb(S_1\sqcup S_2)).
        \]
Using~\eqref{eq-intro:bg} we can express these diffeomorphisms as follows.
\begin{cor}[{Corollary~\ref{cor:implant-D3xS1}}]
\label{cor-intro:BG}
        For the barbell $\delta_m\colon\barbell\hra \D^3\times\S^1$, $m\geq4$,  we have
    \[
        \bg(\delta_m)=
        \ps\circ\sref^\circlearrowright
        (g^{m-2}tg)\;\in \pi_0\Diffp(\D^3\times\S^1).
    \]
\end{cor}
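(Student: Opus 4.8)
The plan is to read this off directly from Proposition~\ref{prop:half-unknotted}, i.e.\ from~\eqref{eq-intro:bg}, after putting the defining data of $\delta_m$ into the normal form that proposition requires. Recall that by definition $\delta_m\colon\barbell\hra\D^3\times\S^1$ has both cuffs unknotted — in particular $S_1$ is unknotted, so $\pi_1(\D^3\times\S^1\sm\nu\barbemb S_1)\cong\pi_1(\D^3\times\S^1)\ast\Z=\langle g\rangle\ast\langle t\rangle$ — and its bar group element is obtained from the bar word $\W=g\by g^{m-3}\bx g^2$ by setting $\by=1$ and $\bx=t$.

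To invoke~\eqref{eq-intro:bg} I would first write $\W$ in the shape $\prod_{i=1}^r f_i\by h_i$ with all $f_i,h_i\in\langle g\rangle\ast\langle t\rangle$. Since $\bx=t$ and neither $g$ nor $t$ equals $\by$, the letter $\by$ occurs exactly once in the reduced word $\W$, so this presentation is forced: $r=1$, $f_1=g$, $h_1=g^{m-3}tg^2$. In the notation of Proposition~\ref{prop:half-unknotted} we then have $\bw_0=1$ and $\bw_1=f_1h_1=g^{m-2}tg^2$, and its formula for $h$ collapses to
\[
    h\;=\;\bw_1\,f_1^{-1}\,\bw_0^{-1}\;=\;f_1h_1f_1^{-1}\;=\;g\cdot g^{m-3}tg^2\cdot g^{-1}\;=\;g^{m-2}tg ,
\]
so that~\eqref{eq-intro:bg} gives $\bg(\delta_m)=\ps\circ\sref^\circlearrowright(g^{m-2}tg)$, as claimed.

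So once Proposition~\ref{prop:half-unknotted} is in hand the corollary is a one-line computation, and no real obstacle remains in the corollary itself. The work that is genuinely needed sits elsewhere: (a) in establishing Proposition~\ref{prop:half-unknotted}, and (b) in checking that the barbell specified above by ``unknotted cuffs together with the bar word $g\by g^{m-3}\bx g^2$'' is indeed (isotopic to) the $\delta_m$ of~\cite[Constr.\ 6.11]{Budney-Gabai} — which amounts to reading off the two cuff meridians $\bx,\by$ and the homotopy class of the bar arc from their picture. This identification is logically independent of the present statement and will have been carried out where $\delta_m$ is introduced, so I would quote it rather than redo it here. The only other point worth a remark is that the $\Z[\langle g\rangle\ast\langle t\rangle]$-valued sum in Proposition~\ref{prop:half-unknotted} degenerates to a single group element precisely because $r=1$, so $\sref^\circlearrowright$ is being applied to an honest element $g^{m-2}tg$ of $\langle g\rangle\ast\langle t\rangle$.
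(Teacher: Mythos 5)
Your proof is correct and follows essentially the same route as the paper's own: both read the statement off directly from Proposition~\ref{prop:half-unknotted} (equivalently~\eqref{eq-intro:bg}) by putting $\W=g\by g^{m-3}\bx g^2$ in the normal form $\prod f_i\by^{\e_i}h_i$ with $r=1$, $f_1=g$, $h_1=g^{m-3}tg^2$, and computing $\bw f_1^{-1}\bw_0^{-1}=g^{m-2}tg^2\cdot g^{-1}=g^{m-2}tg$. Your extra remark — that the identification of $\delta_m$ from \cite[Constr.~6.11]{Budney-Gabai} with the barbell having that bar word is a separate input — is accurate but is already built into how the paper introduces $\delta_m$, so it does not change the substance.
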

\begin{figure}[!htbp]
    \centering
    \includegraphics[width=0.9\linewidth]{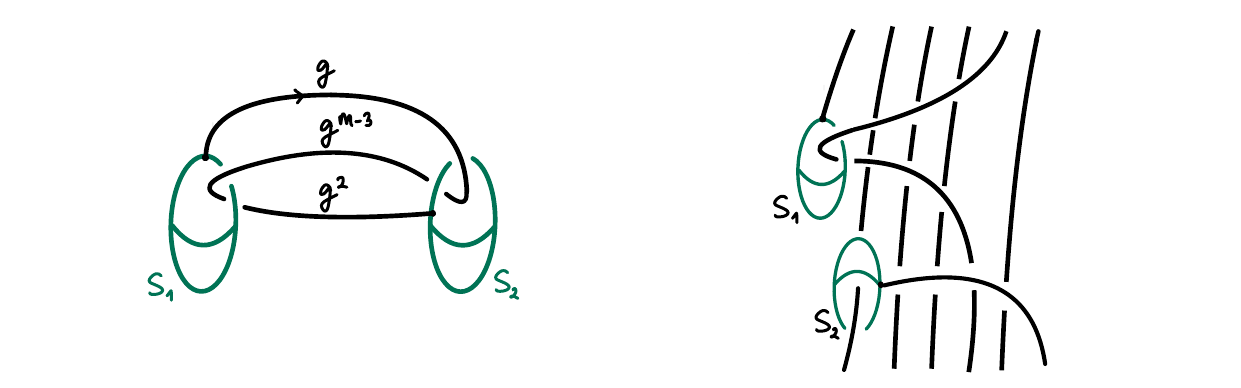}
    \caption{
        (i)~The barbell $\delta_m\coloneqq\barbemb_{g\by g^{m-3}\bx g^2}$ in $\D^3\times\S^1$ is the thickening of the two cuffs and the bar connecting them. By putting $g=1$ we view this as the barbell $\barbemb_{\by\bx}$ in $\D^4\subset\S^4$.
        (ii)~Another picture of $\delta_m\subset\D^3\times\S^1$, with $g\in\Z\cong\pi_1(\D^3\times\S^1)$ as the $y$-axis.
    }
    \label{fig-intro:barbell}
\end{figure}
By a different technique, Watanabe~\cite{Watanabe-theta} shows that his theta classes $\wat(\Theta_{g^{p-1},g})$, $p-1\geq3$, form an infinitely generated free abelian subgroup of $\pi_0\Diffp(\Sigma_{2,3,5}\times\S^1)$, for the Poincar\'e homology 3-sphere $\Sigma_{2,3,5}$. He points out that these classes do not come from $\pi_1\Emb(\nu\S^1,\Sigma_{2,3,5}\times\S^1)$. However,~\eqref{eq-intro:wat} implies that they do lie in the image of the map $\ps$ for $M\coloneqq\Sigma_{2,3,5}\times\S^1$:
\begin{cor}
\label{cor-intro:wat}
    For Watanabe's theta diffeomorphisms of $\Sigma_{2,3,5}\times\S^1$ we have
    \[
        \wat(\Theta_{g^{p-1},g})=\ps\circ\sref^\circlearrowright_\circlearrowright(g^{p-2}tg),
    \]
    where $g$ is represented by $\{pt\}\times\S^1\subset\Sigma_{2,3,5}\times\S^1$ and $t$ by $\{pt\}\times\S^1\subset\S^3\times\S^1$.
\end{cor}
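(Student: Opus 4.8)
The plan is to derive Corollary~\ref{cor-intro:wat} as an immediate combination of the earlier structural results, so no genuinely new geometric input is needed. Concretely, I would first invoke the identity~\eqref{eq-intro:wat} from Corollary~\ref{cor:Theta}, which for arbitrary $g_1,g_2\in\pi_1\M$ gives $\wat(\Theta_{g_1,g_2})=\ps\circ\sref^\circlearrowright_\circlearrowright(h)$ with $h=g_1g_2^{-1}tg_2$. Here the manifold is $\M\coloneqq\Sigma_{2,3,5}\times\S^1$, where $S$ is taken to be the standard unknotted $2$-sphere in a ball $\D^4\subset\M$, so that $\M\#\S^1\times\S^3$ has fundamental group $\pi_1\M\ast\Z=\pi_1(\Sigma_{2,3,5}\times\S^1)\ast\langle t\rangle$, and the composite $\ps\circ\prealmap$ of~\eqref{eq-intro:r-ps} is defined on $\Z[\pi_1\M\ast\Z]$.

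The one genuine step is the bookkeeping of group elements: Watanabe's infinite family consists of the theta classes with $g_1=g^{p-1}$ and $g_2=g$, where $g\in\pi_1(\Sigma_{2,3,5}\times\S^1)$ is represented by $\{pt\}\times\S^1$. Substituting $g_1=g^{p-1}$, $g_2=g$ into $h=g_1g_2^{-1}tg_2$ gives $h=g^{p-1}g^{-1}tg=g^{p-2}tg$, since $g$ commutes with powers of itself in $\pi_1\M$ and $t$ is the free generator. This is exactly the element appearing in the statement of Corollary~\ref{cor-intro:wat}, so~\eqref{eq-intro:wat} yields $\wat(\Theta_{g^{p-1},g})=\ps\circ\sref^\circlearrowright_\circlearrowright(g^{p-2}tg)$ directly. (One could additionally feed this through Theorem~\ref{thm:selfref} to rewrite it as $\ps\circ\prealmap(g^{p-2}tg+(g^{p-2}tg)^{-1})$, but that is not required for the stated form.)

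Finally I would record the hypothesis $p-1\ge 3$, i.e.\ $p\ge 4$, under which Watanabe proves in~\cite{Watanabe-theta} that the classes $\wat(\Theta_{g^{p-1},g})$ span an infinitely generated free abelian subgroup of $\pi_0\Diffp(\Sigma_{2,3,5}\times\S^1)$; the corollary then transports this independence statement into the image of $\ps$, contradicting no part of Watanabe's observation that these classes are not realised by $\pi_1\Emb(\nu\S^1,\Sigma_{2,3,5}\times\S^1)$ -- the point being precisely that $\ps$ has source $\pi_1(\Emb(\nu\S^1,\M\#\S^1\times\S^3);\nu c)$, a strictly larger manifold, whereas Watanabe only rules out the smaller one.

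The main (and only real) obstacle is purely notational: making sure the identification of generators matches Watanabe's conventions -- which $\S^1$ factor carries $g$ versus $t$, and the orientation conventions in the definition of $\Theta_{g_1,g_2}$ that fix the ordered pair $(g_1,g_2)=(g^{p-1},g)$ rather than some inverse or swapped variant. Once the dictionary between \cite{Watanabe-theta} and the conventions of Section~\ref{sec:W} is pinned down (as is already done in the proof of Theorem~\ref{thm:Theta}), the corollary is a one-line specialisation of Corollary~\ref{cor:Theta}.
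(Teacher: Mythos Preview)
Your proposal is correct and matches the paper's approach exactly: the corollary is stated in the introduction without a separate proof, as it follows immediately from~\eqref{eq-intro:wat} (equivalently Corollary~\ref{cor:Theta}) by the substitution $g_1=g^{p-1}$, $g_2=g$, yielding $h=g^{p-1}g^{-1}tg=g^{p-2}tg$. Your additional remarks about the hypothesis $p\ge4$ and the distinction between $\pi_1\Emb(\nu\S^1,\Sigma_{2,3,5}\times\S^1)$ and $\pi_1\Emb(\nu\S^1,\M\#\S^1\times\S^3)$ are correct context but go beyond what the paper records for this corollary.
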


Since the classes $\wat(\Theta_{g^{p-1},g})$ are supported in $\D^3\times\S^1\subset\Sigma_{2,3,5}\times\S^1$, they must be nontrivial and distinct in $\pi_0\Diffp(\D^3\times\S^1)$ as well. This raises the question whether this infinite list differs from the list $\{\bg(\delta_m)\}_{m\geq4}$ of Budney and Gabai. By putting $m=p$ in the last two corollaries and using~\eqref{eq-intro:self-ref} we see that these lists in fact agree.
\begin{cor}
    Watanabe's and Budney--Gabai's countable lists of linearly independent classes in $\pi_0\Diffp(\D^3\times\S^1)$ are exactly the same.
\end{cor}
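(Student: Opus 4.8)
The plan is to read the statement off from Corollaries~\ref{cor-intro:BG} and~\ref{cor-intro:wat} combined with the identity~\eqref{eq-intro:self-ref}, after reinterpreting the Watanabe side as a statement in $\pi_0\Diffp(\D^3\times\S^1)$.

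First I would localise Corollary~\ref{cor-intro:wat}. The theta graph $\Theta_{g^{p-1},g}$ is isotopic to a wedge of two circles that run around the $\S^1$-direction, hence it sits inside a ball-times-circle neighbourhood $\D^3\times\S^1\subset\Sigma_{2,3,5}\times\S^1$ realising the free $\Z$-summand of $\pi_1$. Using that $\wat$, $\sref^\circlearrowright_\circlearrowright$ and $\ps$ are all defined by constructions supported near the relevant configuration, and hence are natural for the codimension-zero inclusion $\D^3\times\S^1\hra\Sigma_{2,3,5}\times\S^1$ (which carries $g\mapsto g$ and $t\mapsto t$ on $\pi_1$), I would conclude that Corollary~\ref{cor-intro:wat} already holds before pushing forward:
\[
    \wat(\Theta_{g^{p-1},g})\;=\;\ps\circ\sref^\circlearrowright_\circlearrowright(g^{p-2}tg)\quad\in\;\pi_0\Diffp(\D^3\times\S^1),
\]
where now $g$ generates $\pi_1(\D^3\times\S^1)\cong\Z$ and $t$ generates the $\S^3\times\S^1$-summand. (One can also bypass this by applying the general-$\M$ version of~\eqref{eq-intro:wat} directly with $\M=\D^3\times\S^1$.)

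Then I would fix $m\geq4$, put $p=m$, and chain three equalities: $\bg(\delta_m)=\ps\circ\sref^\circlearrowright(g^{m-2}tg)$ by Corollary~\ref{cor-intro:BG}; next $\sref^\circlearrowright(h)=\sref^\circlearrowright_\circlearrowright(h)$ for $h=g^{m-2}tg\in\pi_1(\D^3\times\S^1)\ast\Z$ by~\eqref{eq-intro:self-ref}, so their $\ps$-images coincide; and finally $\ps\circ\sref^\circlearrowright_\circlearrowright(g^{m-2}tg)=\wat(\Theta_{g^{m-1},g})$ by the previous paragraph. This gives $\bg(\delta_m)=\wat(\Theta_{g^{m-1},g})$ in $\pi_0\Diffp(\D^3\times\S^1)$ for every $m\geq4$. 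Since Budney--Gabai's list is $\{\bg(\delta_m)\}_{m\geq4}$ and Watanabe's is $\{\wat(\Theta_{g^{p-1},g})\}_{p\geq4}$ (the constraint $p-1\geq3$ being exactly $m\geq4$ after renaming), the assignment $m\leftrightarrow p$ identifies the lists term by term; in particular they span the same infinitely generated free abelian subgroup, whose nontriviality and linear independence are already known from either source.

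The statement is a formal consequence of the preceding results, so I do not expect a genuine obstacle; the one point that needs a little care is the localisation in the first step --- verifying that Watanabe's construction, the self-referential grasper family and the parameterised surgery map are natural under $\D^3\times\S^1\hra\Sigma_{2,3,5}\times\S^1$ with the $\pi_1$-labels transported correctly. This follows immediately once one notes that $\Theta_{g^{p-1},g}$ is contained in the $\D^3\times\S^1$-summand.
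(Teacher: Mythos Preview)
Your proposal is correct and follows the same route as the paper: the sentence immediately preceding the corollary already observes that the theta classes are supported in $\D^3\times\S^1\subset\Sigma_{2,3,5}\times\S^1$, and the proof is then exactly ``put $m=p$ in the last two corollaries and use~\eqref{eq-intro:self-ref}'', which is precisely the chain of equalities you wrote out. Your extra care with the localisation step (and the alternative of invoking~\eqref{eq-intro:wat} directly for $\M=\D^3\times\S^1$) is well placed and matches the paper's implicit reasoning.
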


In Section~\ref{sec:BG} we study more general barbells, and for a 4-manifold $\M$ and any framed embedded 2-sphere $\nu S$ the maps
\begin{equation}\label{eq-def:ps-prealmap}
\begin{tikzcd}
    \Z[\pi_1\M_{\nu S}]\rar{\prealmap} & 
    \pi_1(\Emb(\nu\S^1,\M_{\nu S});\nu c) \rar{\ps_{\nu S}} & 
    \pi_0\Diffp(\M).
\end{tikzcd}
\end{equation}
We show the following.
\begin{cor}[{Corollary~\ref{cor:implant-formula}}]\label{cor-intro:nullhtpic-barbell}
    Let $\M$ be any smooth 4-manifold and a barbell  $\barbemb\colon\barbell\hra \M$ with one of the cuffs, say $\barbemb S_2$, nullhomotopic in $\M$. Then the resulting barbell diffeomorphism of $\M$ is of the form $\ps_{\nu \barbemb S_1}\circ\prealmap(h)$ for some $h\in\Z[\pi_1\M_{\nu\barbemb S_1}]$.
\end{cor}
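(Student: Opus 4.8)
The plan is to run the recasting behind Proposition~\ref{prop:half-unknotted}, but performed only at the cuff $\barbemb S_1$, and then to use that $\barbemb S_2$ is nullhomotopic in order to turn the resulting loop of circles into a grasper family.

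The cuff $\barbemb S_1$ carries the product framing inherited from the first $\S^2\tm\D^2$-factor of $\barbell$, so $\nu\barbemb S_1\colon\nu\S^2\hra\M$ is defined and we may form $\M'\coloneqq\M_{\nu\barbemb S_1}$; the bar and a neighbourhood of $\barbemb S_2$ survive inside $\M\sm\nu\barbemb S_1\subset\M'$. Running the construction of Section~\ref{sec:BG} --- which is supported near $\barbemb(\barbell)$ and uses only the product structure of $\nu\barbemb S_1$, not unknottedness of $\barbemb S_1$ --- one rewrites $\bg(\barbemb)=\ps_{\nu\barbemb S_1}(\gamma)$, where $\gamma\in\pi_1(\Emb(\nu\S^1,\M');\nu c)$ is the \emph{barbell loop}: drag a finger of the surgery core $c$ along the bar, sweep it once around the cuff sphere $\barbemb S_2$ (carrying its normal framing along), and return. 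When $\barbemb S_1$ is unknotted this specialises to Proposition~\ref{prop:half-unknotted}, with $\M'=\M\#\,\S^1\tm\S^3$ and $\ps_{\nu\barbemb S_1}=\ps$.

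It then remains to show $\gamma=\prealmap(h)$ for some $h\in\Z[\pi_1\M']$. Since $\barbemb S_2$ is nullhomotopic in $\M$ and surgery on the $2$-sphere $\barbemb S_1$ can only kill the class $[\barbemb S_1]\in\pi_2\M$, the sphere $\barbemb S_2$ is still nullhomotopic in $\M'$, hence bounds a generic singular $3$-ball $F\colon\D^3\to\M'$ which we may take disjoint from $c$ and from a small meridian sphere $m$ of $c$ parked at the far end of the bar. Combing the finger's swept-out $2$-sphere through $F(\D^3)$ rewrites $\gamma$ as the loop $\gamma_0$ that sweeps the finger around $m$, post-composed with one correction loop for each singularity of $F$ crossed during the combing; each correction loop is, by definition, a simple grasper family $\prealmap(g_j)$ with $g_j\in\pi_1\M'$ the class of the evident loop (out along the bar, across one sheet near the singularity, back along the other sheet and the bar). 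This is the family-level analogue of the Norman-trick bookkeeping behind the Dax invariant, cf.~\cite{K-Dax}. The loop $\gamma_0$ itself is a simple grasper family $\prealmap(g_0)$ whose group element $g_0\in\pi_1\M'$ is read off the bar (the bar element of Definition~\ref{def:bar-word-and-element} after the surgery substitution). Since $\prealmap$ is a homomorphism out of $\Z[\pi_1\M']$, the identity $\gamma=\prealmap(g_0)\cdot\prod_j\prealmap(g_j)^{\pm1}$ gives $\gamma=\prealmap(h)$ for $h\coloneqq g_0+\sum_j\pm g_j\in\Z[\pi_1\M_{\nu\barbemb S_1}]$, whence $\bg(\barbemb)=\ps_{\nu\barbemb S_1}\circ\prealmap(h)$.

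The main obstacle is the combing step: making precise, at the level of $\pi_1$ of the framed embedding space, that sweeping a finger of $c$ around the nullhomotopic sphere $\barbemb S_2$ becomes --- after the ambient isotopy re-parking it as a meridian of $c$, and after everything is transported into $\M_{\nu\barbemb S_1}$ --- a product of simple grasper families indexed by the singularities of a bounding $3$-ball, with the correct group elements and signs. Since the statement asks only for the \emph{existence} of some $h$, one may be cavalier about signs and basepoint conventions; pinning $h$ down exactly (as Proposition~\ref{prop:half-unknotted} together with Theorem~\ref{thm:selfref} do when $\barbemb S_2$ is also unknotted, where $h$ takes the symmetric form produced by Theorem~\ref{thm:selfref}) would require the full bookkeeping.
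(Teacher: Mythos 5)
Your strategy is the paper's strategy: first peel off the cuff $\barbemb S_1$ via the ambient-isotopy/embedding argument (this is Theorem~\ref{thm:BG}, via Lemma~\ref{lem:grasper-barbell}), producing a general grasper family $\prealmap^{\TG_\barbemb}$ with simple root and leaf $\barbemb S_2$; then, using that the leaf is nullhomotopic, push through a bounding (possibly singular) $3$-ball and read off the resulting simple graspers from the double points, which is precisely how Theorem~\ref{thm:simple-null-graspers} is proved (via the Dax invariant Algorithm~\ref{alg} and the Dax homomorphism formula). So the architecture and the key geometric idea are both present, and your final displayed conclusion $\gamma=\prealmap(h)$ is the right one.

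Two points need repair, though. First, the ``residual loop $\gamma_0$ that sweeps around $m$'' is spurious as written. You chose $F$ to bound $\barbemb S_2$ and to be disjoint from $c$ and from $m$; combing the swept sphere all the way through such an $F$ kills the loop, it does not convert it into the meridian sweep $\gamma_0$. All of the $\prealmap$ contributions come from double points encountered during the combing (self-intersections of $F$, and intersections of $F$ with the bar/tube), and the decomposition has no $\gamma_0$ summand. If instead you had chosen $F$ to be a homotopy from $\barbemb S_2$ to $m$, then $\gamma_0$ would appear -- but then you are still owing an argument that $\barbemb S_2$ is homotopic to $m$ in $\M'$, which it generally is not (both are nullhomotopic there, so it would work, but it conflicts with ``$F$ bounds $\barbemb S_2$''). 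Since the paper's Theorem~\ref{thm:simple-null-graspers} produces no such residual term, and since $\prealmap$ is a homomorphism so a spurious summand would not break the existence statement, the slip is harmless for this corollary, but it is an error.

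Second, the justification ``surgery on the $2$-sphere $\barbemb S_1$ can only kill the class $[\barbemb S_1]\in\pi_2\M$'' is not correct as a description of how $\pi_2$ changes under index-one surgery, and more to the point the condition you actually need is that $\barbemb S_2$ be nullhomotopic in $\M_{\nu\barbemb S_1}$, which is what Theorem~\ref{thm:simple-null-graspers} requires and what Corollary~\ref{cor:implant-formula} assumes (there phrased as nullhomotopic in $\M\sm\nu\barbemb S_1$, which is equivalent since $\pi_2(\M\sm\nu\barbemb S_1)\twoheadrightarrow\pi_2(\M_{\nu\barbemb S_1})$ with kernel the $\pi_1$-module generated by $[\barbemb S_1]$). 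Nullhomotopy in $\M$ alone does not obviously give nullhomotopy in $\M_{\nu\barbemb S_1}$: a nullhomotopy in $\M$ may pass through $\nu\barbemb S_1$, and when one tries to push it into the surgered manifold the meridian circles of $\barbemb S_1$ become essential in the glued-in $\S^1\times\D^3$, so the local extension problem can fail. So you should either strengthen the hypothesis to ``nullhomotopic in $\M\sm\barbemb S_1$'' (as the cited Corollary~\ref{cor:implant-formula} does) or supply an argument for why the weaker hypothesis suffices; the one-line reason you give does not.
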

In fact, in Theorem~\ref{thm:BG} we determine the element $h$ in terms of $\W$ and $\barbemb S_2$.
For a more explicit formula in the case when one cuff is unknotted see Proposition~\ref{prop:half-unknotted}, and for examples when both cuffs are unknotted see Section~\ref{subsec:barbells-unknotted}.

\subsection{Parameterised surgery on a simple grasper is a Gay twist}\label{subsec-intro:ps-Gay}

Using the work of David Gay~\cite{Gay} we can describe diffeomorphisms in the image of $\ps_{\nu S}\circ\prealmap$ more explicitly.

\begin{defn}\label{def:Gay-twist-3T}
    For a 4-manifold $\X$ and a framed embedded 3-torus $\nu\Sigma\colon\S^1_a\times\S^1_b\times\S^1_\theta\times[0,1]\hra \X$ we define the \emph{Gay twist $G(\Sigma)\in\pi_0\Diffp(\X)$ along $\Sigma$} to be the identity on $\X\sm\nu\Sigma$, and the product of the identity on $\S^1_b\times\S^1_\theta$ with the positive Dehn twist on $\S^1_a\times[0,1]$.
\end{defn}

The parametrisation of the 3-torus $\Sigma$ is of crucial importance in the definition of $G(\Sigma)$ (the factor $[0,1]$ can be recovered from the orientations). The key class of examples arises as follows.

\begin{defn}\label{def:Gay-twist-2T}
    If $\nu T\colon\S^1_a\times\S^1_b\times\D^2\hra \X$ is a framed embedding of the 2-torus, consider the 3-torus $\Sigma_T\coloneqq\partial(\nu T)\cong T\times\S^1_\theta$ obtained as the normal circle bundle of $T$. Let $\nu\Sigma_T\colon\S^1_a\times\S^1_b\times\S^1_\theta\times[0,1]\hra \X$ be its tubular neighbourhood. We define the \emph{Gay twist $G(\nu T)\in\pi_0\Diffp(\X)$ associated to $T$} as the Gay twist along $\nu\Sigma_T$.

    Since $G(\nu T)$ is the identity on a small neighbourhood $\nu c$ of $c=T(\{pt\}\times\S^1_b)\subset \X\sm\nu\Sigma_T$, we can define the diffeomorphism of the surgered manifold $\X_{\nu c}=(\X\sm\nu c)\cup_{\partial \nu c}(\D^2\times\S^2)$, by removing $\nu c$ and extending by the identity over $\D^2\times\S^2$. We call this the \emph{surgered Gay twist} and write
    \[
    \ol{G}(\nu T)\in\pi_0\Diffp(\X_{\nu c}).\qedhere
    \]
\end{defn}

Observe that given an embedded framed 2-torus $\nu T_f\colon \S^1_a\times\S^1_b\times\D^2\hra \X$ we can foliate it by $f_a=T_f|_{\{a\}\times\S^1_b}$ for $a\in\S^1$ to obtain a family $f\in\pi_1(\Emb(\S^1,\X); c)$, where $c=T_f|_{\{pt\}\tm\S^1_b}$. The framing on $f$ is recovered by adding to the normal framing of $f_a$ in $T_f$ the normal framing $\nu T_f$ of $T_f\subset \X$. Note that if $\X= \M_{\nu S}$ then $\X_{\nu c}$ is diffeomorphic to $\M$. The following is essentially a result due to Gay~\cite[Thm.\ 4]{Gay} and Gay and Hartman~\cite[Lem.\ 6]{Gay-Hartman}, rewritten using our notions of grasper classes and parameterised surgery. 

\begin{mainthm}\label{thm-intro:Gay-twist}
    For any $\nu f\in\pi_1(\Emb(\nu\S^1,\M_{\nu S}); \nu c)$ that is obtained by foliating a framed embedded torus containing $\nu c$ by framed circles, the class $\ps_{\nu S}(\nu f)\in\pi_0\Diff(\M)$ is represented by a surgered Gay twist. More precisely, there exists $\nu T_f\colon\nu(\S^1\times\S^1)\hra \M_{\nu S}$
    such that 
    \[
        \ps_{\nu S}\big(\prealmap(\nu f)\big)=\ol{G}(\Sigma_{T_f}).
    \]
\end{mainthm}
\begin{proof}
    Using the coordinates on $\nu T_f$ as in \cite[Thm.\ 4]{Gay}, we can write down an explicit ambient isotopy extension $F_t\in\Diffp(\M_{\nu S})$ of $\nu f_t$, which is clearly supported on $\nu T_f$. Let $(r,\theta)\in\D^2$ be radial coordinates, and $\vartheta\colon[0,1]\to[0,1]$ a smooth non-increasing function such that $\vartheta=1$ near $0$ and $\vartheta=0$ near $1$. Then define
    \[
        F_t(a,b,r,\theta) = (a+ t\vartheta(r), b, r, \theta).
    \]
    In words, for a fixed $(b,r,\theta)$ the points on the circle $\S^1_a\times\{(b,r,\theta)\}$ shift by a function of $r$. Points near $T_f$ and $\partial(\nu T_f)$ do not move, and the remaining space can be viewed as $\nu\Sigma_f$ for $\Sigma_f=\partial(\nu T_f)=\S^1_a\times\S^1_b\times\{1/2\}\times\S^1_\theta$, on which the formula performs the Gay twist.
    Note that $c$ corresponds to $a=0$ and $(r,\theta)=(0,0)$ so we have $F_t\circ c(b)=(t,b,0,0)$, which is precisely the curve $f_t$. Since $F_0=\Id$, we have that $F_t$ is desired ambient isotopy extension.
    
    Finally, $\delta_{\nu c}(f)=F_1|_{\M\sm\nu S}$ by definition, and since $\Sigma_f\subset \M\sm\nu S$, this is still given as $G(\nu \Sigma_f)$. To obtain $\ps\circ\prealmap(f)$ we extend it by the identity on $\nu S\cong\D^2\times\S^2$, which is $\ol{G}(\nu \Sigma_f)$ by definition.
\end{proof}
\begin{rem}
    In Remark~\ref{rem:emb-torus} we will see that each class $f\in\pi_1(\Emb(\nu\S^1,\M_{\nu S}); \nu c)$ that is in the image of $\prealmap$ is represented by a foliation of a framed embedded torus, as in Figure~\ref{fig-intro:Tf}. 
    The group $\pi_1(\Emb(\nu\S^1,\M_{\nu S});\nu c)$ is in general a nontrivial extension of $\im(\prealmap)\cong\Z[\pi_1\M_{\nu S}]/\ker(\prealmap)$ as in~\eqref{eq:Dax-extension}, and not every class is represented by an embedded 2-torus.
\end{rem}

\begin{rem}\label{rem:3-steps}
    From the definition of the surgered Gay twist, we see that one can approach the question of its nontriviality in two steps: we first detect nontriviality of a Gay twist, and then whether it survives surgery (no pun intended).
    This is analogous to the factorisation of $\ps_{\nu S}$ in \eqref{eq-intro:ps-S}. In fact, we can proceed in three steps: compute the group $\pi_1(\Emb(\nu\S^1,\M_{\nu S});\nu c)$, then the kernel $\ker(\delta_{\nu c})\subseteq\pi_1(\Emb(\nu\S^1,\M_{\nu S});\nu c)$, and then study the map $\cup\Id_{\nu S}$ on the image of $\delta_{\nu c}$.
\end{rem}

\subsection{The 4-sphere}

For $\M=\S^4$ by \cite{Budney-Gabai,K-Dax} we have $\Z[\pi_1(\M\#\,\S^3\times\S^1)]=\Z[t,t^{-1}]$ and $\ker(\prealmap)=\langle t^{-k}+t^{k-1},1\rangle$, so the quotient of $\Z[t,t^{-1}]$ by $\ker(\prealmap)$ is isomorphic to $\Z^\infty\coloneqq\Z\langle t,t^2,\dots\rangle$. The cokernel of $\prealmap$ is $\Z$, which splits back as the loop of circles $\rot_c$ given by rotating the source $\S^1$ once. Moreover, since $\S^4$ is spin by the mentioned result of~\cite{K-Dax2} the framed space has an additional $\Z/2$, generated by the loop of framed circles $\rot_c$ given by rotating the normal disks. Therefore, we have
\begin{equation}\label{eq:case-S3xS1}
\begin{tikzcd}
    \Z\times\Z/2\times \Z^\infty\ar{rrr}{\rot_c\times \rot_{\nu c}\times \prealmap}[swap]{\cong} &&&
    \pi_1(\Emb(\nu\S^1,\S^3\times\S^1);\nu c)\rar{\ps} &
    \pi_0\Diff(\S^4).
\end{tikzcd}
\end{equation}
It is immediate that $\ps(\rot_c(1))=0$ -- undo it by an ambient isotopy extending the rotation of the bounding disk for $c$ in $\S^4$, as well as $\ps(\rot_{\nu c}(1))=0$ -- rotate the 3-balls in $\S^1\tm\D^3=\S^3\sm\nu c$ back. Thus, it remains to study $\ps(\prealmap(t^i))$ for $i\geq1$.

The relations $\ker(\prealmap)=\langle t^{-k}+t^{k-1},1\rangle$ imply $\prealmap(t^{-1})=\prealmap(1)=0$, so we have $\wat(\Theta)=\bg(\barbemb_{\by\bx})=\ps\circ\prealmap(t)$ by Theorem~\ref{thm-intro:W-BG}. Moreover, in Corollary~\ref{cor:implant-D4} we will express any barbell diffeomorphism in $\S^4$ with unknotted cuffs in terms of $\ps\circ\prealmap$, so that also $\bg(\barbemb_{\by^{-1}\bx})=\ps\circ\prealmap(t)$ for example.

\begin{cor}
    In $\pi_0\Diff(\S^4)$ there are equalities
    \[
\wat(\Theta)=\bg(\barbemb_{\by\bx})=\bg(\barbemb_{\by^{-1}\bx})=W(1)=\ps\circ\prealmap(t).
    \]
\end{cor}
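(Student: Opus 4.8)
The plan is to assemble the corollary from the three equalities already established earlier in the introduction, specialised to $\M=\S^4$. First I would record the specialisation of Theorem~\ref{thm-intro:W-BG} to the trivial group $\pi_1\S^4=1$: taking $g_1=g_2=1$ gives the word $g_1g_2^{-1}tg_2+(g_1g_2^{-1}tg_2)^{-1}=t+t^{-1}$, and since $\prealmap(t^{-1})=\prealmap(1)=0$ by the relation $\ker(\prealmap)=\langle t^{-k}+t^{k-1},1\rangle$ (the case $k=0$ giving $t^{-1}+t^{-1}$, hence $2\prealmap(t^{-1})=0$, and more directly $t^{0}+t^{-1}=1+t^{-1}\in\ker$ together with $1\in\ker$ forces $\prealmap(t^{-1})=0$), we get $\ps\circ\prealmap(t+t^{-1})=\ps\circ\prealmap(t)$. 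At the same time the right-hand equality of Theorem~\ref{thm-intro:W-BG} with $g_1=g_2=1$ identifies $\bg(\barbemb_{\by\bx})$ with $\ps\circ\prealmap(t)$, and the left-hand side is $\wat(\Theta_{1,1})=\wat(\Theta)$ in the notation of this subsection. This already yields $\wat(\Theta)=\bg(\barbemb_{\by\bx})=\ps\circ\prealmap(t)$.

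Next I would treat the term $W(1)$: by Corollary~\ref{cor-intro:BG}, $\bg(\delta_m)=\ps\circ\sref^\circlearrowright(g^{m-2}tg)$ in $\pi_0\Diffp(\D^3\times\S^1)$, and putting $g=1$ (as indicated in Figure~\ref{fig-intro:barbell}, where $\delta_m$ is viewed as the barbell $\barbemb_{\by\bx}$ in $\D^4\subset\S^4$) collapses the bar word to $\by\bx$ and the group element to $t$; then \eqref{eq-intro:self-ref}, i.e. $\sref^\circlearrowright(t)=\prealmap(t+t^{-1})$, together with the vanishing above gives $W(1)=\bg(\delta_m)|_{g=1}=\ps\circ\prealmap(t)$. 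I should make sure the notation $W(1)$ in the statement really refers to the Watanabe/Budney--Gabai list evaluated at the base case $m=4$ (or equivalently the $g=1$ specialisation of $\delta_m$); this is a matter of matching conventions with Section~\ref{sec:W} and Section~\ref{sec:BG} rather than a genuine difficulty.

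Finally, for $\bg(\barbemb_{\by^{-1}\bx})$ I would invoke Corollary~\ref{cor:implant-D4}, which (as announced in the paragraph preceding the statement) expresses any barbell diffeomorphism of $\S^4$ with unknotted cuffs in terms of $\ps\circ\prealmap$; applying it to the bar word $\W=\by^{-1}\bx$ produces some element of $\Z[t,t^{-1}]$, and one checks that modulo $\ker(\prealmap)=\langle t^{-k}+t^{k-1},1\rangle$ this element is congruent to $t$, so $\bg(\barbemb_{\by^{-1}\bx})=\ps\circ\prealmap(t)$ as well. Chaining the four identities gives the asserted string of equalities in $\pi_0\Diff(\S^4)$.

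The only real obstacle is the last step: pinning down exactly which element of $\Z[t,t^{-1}]$ the barbell $\barbemb_{\by^{-1}\bx}$ produces under $\ps\circ\prealmap$ and verifying it reduces to $t$ in the quotient by $\langle t^{-k}+t^{k-1},1\rangle$. This requires the explicit formula from Theorem~\ref{thm:BG} (or Proposition~\ref{prop:half-unknotted}) applied with the inverted meridian, and a short computation in the group ring; I expect the sign change $\by\mapsto\by^{-1}$ to modify the word by a term lying in the kernel, so that the class is unchanged, but this is the place where one cannot avoid a genuine (if brief) calculation. Everything else is bookkeeping with already-proven statements.
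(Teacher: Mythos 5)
Your argument for the first link, $\wat(\Theta)=\bg(\barbemb_{\by\bx})=\ps\circ\prealmap(t)$, is exactly the paper's: specialise Theorem~\ref{thm-intro:W-BG} to $g_1=g_2=1$ and use $\prealmap(t^{-1})=\prealmap(1)=0$. That part is fine.

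The $W(1)$ step has a genuine gap. You hedge about what $W(1)$ means and guess it is the $g=1$ specialisation of the Budney--Gabai barbell $\delta_m$; that is not what the paper means. In the paper, $W(i)$ is Gay's Montesinos twin twist $\tau_{(R(i),S)}$, an object defined independently of graspers and barbells. The paper does \emph{not} deduce $W(1)=\ps\circ\prealmap(t)$ from Corollary~\ref{cor-intro:BG}. Instead it first uses Theorem~\ref{thm-intro:Gay-twist} to recast a twin twist as a surgered Gay twist $\ol G(\nu T_f)=\ps\circ\prealmap(f)$, and then matches Gay's construction of the twins $(R(i),S)$ (his Figure 8) to the paper's semisimple grasper family in Figure~\ref{fig:sref}(ii), concluding $W(i)=\ps\circ\sref^\circlearrowright(t^i)$. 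From there Theorem~\ref{thm:selfref} and Corollary~\ref{cor:selfref} give $W(1)=\ps\circ\prealmap(t)$. This figure-comparison is a separate geometric input, not a specialisation of $\delta_m$, and cannot be replaced by the bookkeeping you propose.

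The $\bg(\barbemb_{\by^{-1}\bx})$ step also does not close the way you expect. Applying Corollary~\ref{cor:implant-D4} to $\W=\by^{-1}\bx$ (so $\e_1=-1$, $n_1=1$) gives $\bg(\barbemb_{\by^{-1}\bx})=\ps\circ\sref^\circlearrowright(-t)=\ps\circ\prealmap(-t)=\big(\ps\circ\prealmap(t)\big)^{-1}$. The element $2t$ is \emph{not} in $\ker(\prealmap)=\langle t^{-k}+t^{k-1},1\rangle$ (the quotient is free abelian on $t,t^2,\dots$), so the sign flip is not absorbed by the group ring. What rescues the equality is the symmetry of the barbell (Lemma~\ref{lem:symmetric-barbell}), which gives $\bg(\barbemb_{\by\bx^i})^{-1}=\bg(\barbemb_{\by^{-1}\bx^i})$, and Lemma~\ref{lem:2-torsion}, which shows $\ps\circ\prealmap(t)$ is $2$-torsion. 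You flag the need for a calculation here, but the expectation that the discrepancy lies in $\ker(\prealmap)$ is incorrect; the resolution is $2$-torsion in the target group $\pi_0\Diff(\S^4)$, not a relation in $\Z[t,t^{-1}]$.
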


Here $W(1)$ stands for a Montesinos twin twist of Gay~\cite{Gay}. Namely, the Montesinos twin twist $\tau_{(R,S)}$ from \cite[Thm.\ 4]{Gay} and \cite[Lem.\ 6]{Gay-Hartman} is by definition the Gay twist $\ol{G}(\nu T_f)$, where $R$ is the embedded sphere obtained by surgery on $T_f$ along $c$ and $S$ is the meridian sphere of $c$. Moreover, we saw in Theorem~\ref{thm-intro:Gay-twist} that $\ol{G}(\nu T_f)$ is precisely $\ps\circ\prealmap(f)$. 

For $\M=\S^4$ Gay constructs Montesinos twins $(R(i),S)$, $i\geq 0$, using a ``snake whose tail passes through his head after linking $S$ $i$ times'', and defines $W(i)\coloneqq\tau_{(R(i),S)}$. By comparing \cite[Fig.~8]{Gay} to our Figure~\ref{fig:sref}(ii) for all $i\geq1$ we more precisely have $W(i)=\ps\circ \sref^\circlearrowright(t^i)$.

A remarkable result of Gay and Hartman~\cite[Lem.\ 10,Fig.\ 2]{Gay-Hartman} says $W(i)^{-1}=W(1)^i$,
and in particular $W(1)^2=\Id$. A related observation was made by Budney and Gabai~\cite[Prop.5.17]{Budney-Gabai}. In our language (see Lemma~\ref{lem:2-torsion}):
\[
    \ps\circ\prealmap(-t^i)=\ps\circ\prealmap(it)=(\ps\circ\prealmap(t))^i.
\]
Among the experts it was expected that Watanabe's theta class $\wat(\Theta)$ is also 2-torsion (since the theta graph is in its graph complex), as confirmed by the above corollary. Gay mentions a relation of twin twists to barbells~\cite[4]{Gay}. 
\begin{rem}
    David Gay has recently announced a proof of the equality $\wat(\Theta)=W(1)$.
\end{rem}

Finally, let us point out that for an arbitrary barbell $\barbemb\colon\barbell\hra \S^4$ the cuff $\barbemb S_2$ is nullhomotopic in $\S^4$, so satisfies conditions of Corollary~\ref{cor-intro:nullhtpic-barbell}. However, we point out that it is not clear a priori if these diffeomorphisms lie in the image of $\ps$ (that is, the 1-2 subgroup of \cite{Gay}).

\begin{maincor}\label{cor-intro:final-S4}
    For $\M=\S^4$ the image of the parameterised surgery map $\ps$, as well as any barbell diffeomorphism with one unknotted cuff, consists of at most of one class,
    \[
    \ps\circ\prealmap(t)=\wat(\Theta)=\bg(\barbemb_{\by\bx})=W(1),
    \]
    and it is 2-torsion. This class can also be described as the surgered Gay twist $\ol{G}(\nu T)$ on the 3-torus that is the normal circle bundle of $T\colon\S^1\times\S^1\hra\S^1\times\S^3$ from Figure~\ref{fig-intro:Tf}.
    More generally, any barbell diffeomorphism $\bg(\barbemb)$ in $\S^4$ can be expressed as $\ps_{\nu\barbemb S_1}(f)$ for some $f\in\pi_1(\Emb(\nu\S^1,\S^4_{\nu\barbemb S_1});\nu c)$.
\end{maincor}

\subsection*{Conventions}
All manifolds and embeddings are smooth and oriented.
An embedding is denoted by $K\colon Y\hra \X$, and
$\nu K\colon\nu Y\hra \X$ means a parameterised tubular neighbourhood of $K$. If $Y=\S^{k-1}$ for some $k\geq1$ then $\X_{\nu K}\coloneqq (\X\sm\nu K)\cup\D^k\times\S^{d-k}$ is the result of surgery on $\nu K$.

By a \emph{meridian} $m$ of $S\colon\S^2\hra \X$, we mean the boundary of a small normal disk at a point. This disk has positively orientation if that of $S$ followed by that of the disk gives the chosen orientation of $\X$. We orient $m$ using the ``outward normal first''.
In our drawings the fourth dimension is oriented towards the reader (of relevance when describing movement of embedded arcs with time).

For a group $\pi$ the free abelian group generated by $\pi$ is denoted by $\Z[\pi]$ and an element is written as $\sum\e_i h_i$ for $\e_i\in\{-1,+1\}$ and $h_i\in\pi$. There is an involution $\ol{\sum\e_i h_i}\coloneqq \sum \e_i h_i^{-1}$.

For a 4-manifold $\X$ the intersection pairing $\lambda\colon\pi_3\X\times\pi_1\X\to\Z[\pi_1\X]$ is the equivariant intersection number between a 3-sphere $a=[A]\in\pi_3\X$ and a circle $h=[\gamma]\in\pi_1\X$. It is computed by picking respective representatives $A\colon\S^3\to \X$ and $\gamma\colon\S^1\to \X$ which intersect transversely, in finitely many points $x_i\in \X$, for $i=1,\dots,n$, and letting $\lambda(a,h)=\sum_{i=1}^n\e_ih_{x_i}$. 
The double point loop $h_{x_i}$ follows a path from the basepoint to $x_i$ on $A$ and then back on $\gamma$, and the sign is positive if and only if $dA(\R^3)\oplus d\gamma(\R)$ gives a positive basis of $d\X|_{x_i}$.
We also use the reduced intersection pairing $\lambdabar\colon\pi_3\X\times\pi_1\X\to\Z[\pi_1\X\sm1]$ that sends $(a,h)$ to $\lambda(a,h)$ minus the coefficient at $1\in\pi_1\X$.

We remark that in \cite{Budney-Gabai} the \emph{model barbell} $\barbell$ is called the barbell, a \emph{barbell} $\barbemb\colon\barbell\hra \M$ is a barbell implantation, whereas \emph{twirling} is spinning. In \cite{KT-4dLBT,K-Dax} twirling is called swinging. 

\subsection*{Acknowledgements}
I wish to thank Pete Teichner for many discussions about Watanabe's classes, and the dotted sphere notation that inspired Proposition~\ref{prop:phc}. Many thanks to Peter Feller and Oscar Randal-Williams for the help with a lemma that unfortunately did not make it to the paper (but perhaps will to some future one!). I am immensely grateful to Daniel Hartman for pointing out a mistake in a draft version. Thank you to Tadayuki Watanabe for useful comments.

\begin{spacing}{0.1}
    \tableofcontents
\end{spacing}

\section{Knotted families from simple graspers}\label{sec:graspers}

Throughout this section we fix an oriented compact smooth 4-manifold $\X$ with (possibly empty) boundary, and a smooth embedding $c\colon\S^1\hra \X$, our basepoint for $\Emb(\S^1,\X)$, with homotopy class $\bc\in\pi_1\X=\pi_1(X,c(e))$.
In Section~\ref{subsec:graspers} we recall the definition of degree one grasper family:
\begin{equation}\label{eq-def:realmap}              
    \prealmap\coloneqq\prealmap_{\X,c}\colon\Z[\pi_1\X]\ra\pi_1(\Emb(\S^1,\X); c).
\end{equation}
In Section~\ref{subsec:Dax-invt} we study the Dax invariant, which is an inverse of $\prealmap$ on its image. In Section~\ref{subsec:links} we briefly discuss some families of links, that will be needed in Watanabe's construction.

\subsection{Simple grasper families}
\label{subsec:graspers}

We assume that $\S^1$ is oriented and has a fixed basepoint $e\in\S^1$. In our pictures the neighbourhood of $c(e)$ is not drawn, and the rest of $c$ is represented by the $x$-axis. The small arrow next to the $x$-axis indicates the homotopy class $\bc$ of $c$, and the double arrow around the disk in the middle of the pictures indicates a group element $h\in\pi_1\X$.

When $\X= \M\#\,\S^3\times\S^1$ and $c=\{pt\}\times\S^1$, it represents the generator $\bc=t$ of the free factor $\Z<\pi_1(\M\#\,\S^3\times\S^1)$. Thus, the double arrow indicates a word $h\in\pi_1\M\ast\Z\cong\pi_1(\M\#\,\S^3\times\S^1)$.

We now define the group homomorphism \eqref{eq-def:realmap}, following~\cite{Gabai-disks,KT-highd}. 

\begin{defn}\label{def:realmap}
    For a group element $h\in\pi_1(\X)$ we define a family of embeddings
    \[
        \prealmap(h)_s\colon\S^1\hra \X,
    \]
    where $s\in[0,1]$ and $\prealmap(h)_0=\prealmap(h)_1=c$ as follows. We first pick an embedding $\TG\colon\D^4\hra \X$ which intersects $c$ in exactly two subintervals, that are neighbourhoods of two points $p_0,p_1$, so that $c(e)<p_0<p_1$ and the loop that goes from $p_0$ to $p_1$ along $\TG$ and then back to $c(e)$ along oppositely oriented $c$, represents the given element $h$.

    Then $\prealmap(h)_s$ is the isotopy that first takes a neighbourhood of $p_0$ and drags it within $\TG$ until it reaches the meridian sphere of $c$ at $p_1$, then twirls around this sphere, and then goes back the same way. In the twirling motion we use a fixed foliation of $\S^2$ by arcs that have endpoints fixed.

    We represent this family as in Figure~\ref{fig-intro:families}(ii):  we draw only the meridian spheres at $p_0$ (called the \emph{root}) and $p_1$ (called the \emph{leaf}) and an arc in $\TG$ connecting them. \emph{We call this object a simple grasper.}

    Next, we define $\prealmap(-h)$ by the same picture as for $\prealmap(h)$ except that the leaf sphere is the negatively oriented meridian (i.e.\ the linking number with $c$ is $-1$), see Figure~\ref{fig:realmap-ext}(i).

    Finally, given $\sum_{i=1}^m\epsilon_i h_i\in\Z[\pi_1\X]$ we pick disjoint balls $\TG_i$ with associated group elements $h_i$ and $p_0^m<\dots<p_0^1<p_1^1<\dots<p_1^m$, see Figure~\ref{fig:realmap-ext}(ii) for $m=2$. We define the family $\prealmap(\sum_{i=1}^m\epsilon_i h_i)$ by letting the corresponding families $\prealmap(\epsilon_i h_i)$ in $\TG_i$ run simultaneously in time parameter $s\in[0,1]$.
\end{defn}

\begin{figure}[!htbp]
    \centering
    \includegraphics[width=0.9\linewidth]{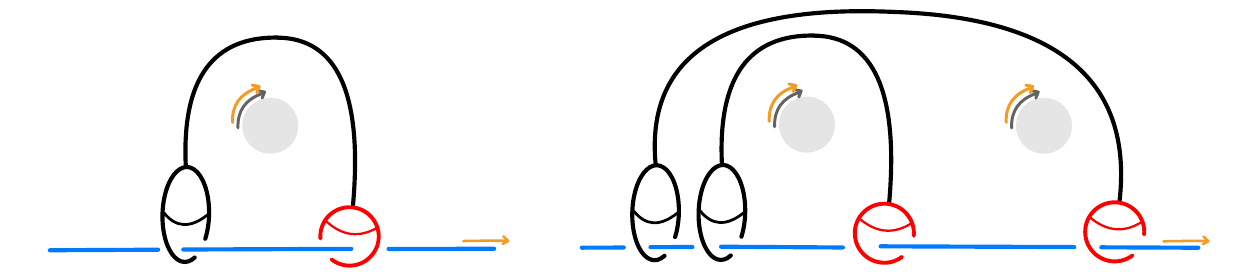}
    \caption{
        (i)~The family $\prealmap(-h)$.
        (ii)~The family $\prealmap(h_1+h_2)$.
    }
    \label{fig:realmap-ext}
\end{figure}

\begin{rem}\label{rem:half-twist}
    One can show that $\prealmap(\sum\epsilon_i h_i)=\prod\prealmap(h_i)^{\epsilon_i}$, so $\prealmap$ is a homomorphism. See the proof of \cite[Thm.\ 4.21]{KT-highd}. Note that the product on the right is in $\pi_1$, so in the $s$-direction, whereas our definition of the left hand side uses a sort of a product in the circle direction (use all graspers in parallel). See also Lemma~\ref{lem:inverse} below for an alternative descriptions of $\prealmap(-h)=\prealmap(h)^{-1}$. 
\end{rem}

\begin{rem}\label{rem:emb-torus}
    It is not hard to see that each $\prealmap(\pm h)$ has a representative given by foliating an embedded 2-torus: start by  a small rotation of $c$ in the normal direction giving a thin embedded torus; then attach to this a tube that goes around $h$ and grabs the thin torus at a big enough meridian of $c$. Moreover, $\prealmap(f)$ for any $f=\sum\epsilon_i h_i$ is a thin torus plus several such tubes. The previous remark says that it does not matter if we foliate this by going over the tubes one after another, or if we foliate them in parallel.
\end{rem}


\subsection{The Dax invariant}
\label{subsec:Dax-invt}
    
    The image of $\prealmap$ consists of (nonbased) nullhomotopic families. That is, there are group extensions
    \begin{equation}\label{eq:Dax-extension}
        \begin{tikzcd}
        &&  \faktor{\pi_2\X}{b=\bc\cdot b}\dar[tail]{} 
        \\
        \faktor{\Z[\pi_1\X]}{\ker(\prealmap)}\rar[tail]{\prealmap_{\X}} 
        & \pi_1(\Emb(\S^1,\X); c)\rar[two heads]{\iota} 
        & \pi_1(\Imm(\S^1,\X); c)\dar[two heads]{\ev_e}\\
        && \Fix_{\bc}(\pi_1\X),
        \end{tikzcd}
    \end{equation}
    where $\Fix_{\bc}(\pi_1\X)\coloneqq\{h\in\pi_1\X: h=\bc h\bc^{-1}\}$ is the centraliser of $\bc\in\pi_1\X$.
    These sequences do not split in general, and $\pi_1(\Emb(\S^1,\X); c)$ is often nonabelian, see \cite{KT-4dLBT,K-Dax,K-Dax2}. For some information on the kernel of $\prealmap$ see Section~\ref{subsec:dax} below.
    
On the image of $\prealmap$ (equivalently, on the kernel of the inclusion map $\iota$ to immersions from~\eqref{eq:Dax-extension}) one can define an explicit inverse, the Dax invariant
    \[\begin{tikzcd}
        \Da\colon\; \im(\prealmap)=\ker(\iota)
        \rar[tail]{} &
        \faktor{\Z[\pi_1\X]}{\ker(\prealmap)}
    \end{tikzcd}
    \]
that counts double points (with signs and group elements) in any homotopy. More precisely, let us give an algorithm that computes $\Da(f)$ for $f\in\pi_1(\Emb(\S^1,\X); c)$ such that $\iota(f)=0$.

\begin{alg}\label{alg}\hfill
\begin{enumerate}
    \item Pick a homotopy $F\colon[0,1]\to\Omega\Imm(\S^1,\X)$, $\tau\mapsto F_{-,\tau}$, from $F_{-,0}=f$ to $F_{-,1}=\const_c$ such that the circle $F_{s,\tau}\colon\S^1\imra \X$ is not embedded only at finitely many times $\tau=\tau_j\in[0,1]$ and $s=s_j\in[0,1]$. Note that $F_{0,\tau}=F_{1,\tau}=c$ for all $\tau\in[0,1]$.
    \item For every $j$ make an ordered list $c(e)<x^j_1<\dots<x^j_{k_j}$ of double points $x^j_i\in \X$ of the nonemebedded circle $F_{s_j,\tau_j}(\S^1)$, using the positive orientation of $\S^1$ and the basepoint $e$ for the ordering. Note that at each $x_i^j$ there are two sheets (local pieces of the circle $F_{s_j,\tau_j}$ passing through $x_i^j$), and we can order them as well.
    \item Modify $F$ so that for each $x_i^j$: (a) only one of the sheets moves with $\tau$ or $s$, whereas the other sheet stands still in both of these directions, (b) there is a chart $\R\times\R^3\subset \X$ around $x_i^j$, so that the $s$-derivative of the moving sheet is parallel to the positive $\R\times\{\Vec{0}\}$ direction (i.e.\ moves from past into future with $s$).
    \item For each $x_i^j$ write down the double point \textbf{loop} $h_i^j\in\pi_1\X$: start at $c(e)=F_{s_j,\tau_j}(e)$ and go on $F_{s_j,\tau_j}$ until you reach $x_i^j$ for the \emph{first time}; then change to the  \emph{other sheet} and run on it in the \emph{reversed orientation} until you reach $c(e)$ again.
    \item For each $x_i^j$ write down the double point \textbf{sign} $\epsilon(x_i^j)=\epsilon_1\cdot\epsilon_2\in\{\pm1\}$ as follows: $\epsilon_1=1$ (resp.\ $\epsilon_1=-1$) if the first (resp.\ second) sheet is moving, and $\epsilon_2$ is positive if and only if the following basis of $\{0\}\times\R^3$ is positively oriented: the first vector is the derivative in the $\tau$-direction of the moving sheet, the second vector is tangent to the moving sheet, the third vector is tangent to the non-moving sheet.
    \item Finally, let $\Da(f)\coloneqq\sum_j\sum_i\epsilon(x_i^j)h_i^j$.
\end{enumerate}
\end{alg}
\begin{proof}[Proof of the Algorithm]
    Let us show that this is a valid way to compute $\Da(f)$, using its definition from \cite{KT-highd} and assuming the main theorem there. In other words, we know that $\ker(\iota)=\im(\prealmap)$ and that there is an invariant $\Da$, defined in \cite[Sec.\ 4.1.3]{KT-highd}, that is well defined, i.e.\ does not depend on the choice of a homotopy $F$. 

    Let us first show that the Algorithm gives
    \[
        \Da(\prealmap(h))=h.
    \]
    First note that in our parametrisation of the family $\prealmap(h)$ (Figure~\ref{fig:realmap-Dax}(i)) there is a single circle $\prealmap(h)_{s_1}$ contained completely in the present slice (Figure~\ref{fig:realmap-Dax}(ii)). A homotopy from the family $\prealmap(h)$ to the constant family is given by pulling apart the linking of the red sphere with the horizontal line, using the meridian ball filling that sphere in. During the homotopy, a single double point $x_1$ occurs, precisely in the homotopy $\prealmap(h)_{s_1,\tau}$ of the circle $\prealmap(h)_{s_1}$ (Figure~\ref{fig:realmap-Dax}(iii)). It is obtained by pushing the undercrossing arc up, that is, the $\tau$-derivative points towards the reader. To compute the associated loop note that the first sheet is on the tip of the finger (the vertical red arc in Figure~\ref{fig:realmap-Dax}(iii)). By the time we reach the double point we have already traversed $h$, and then we go back by traversing the whole finger, which gives $hh^{-1}=1$, so in total we have $h$. For the sign, note that the first sheet (red) is the one to move (double arrow). Thus, the basis ``($\tau$-derivative, moving, nonmoving)'' is positive, so $\epsilon(x_1)=(+1)(+1)=+1$. Therefore, $\Da(\prealmap(h))=h$ as claimed. 
    
\begin{figure}[!htbp]
    \centering
    \includegraphics[width=0.95\linewidth]{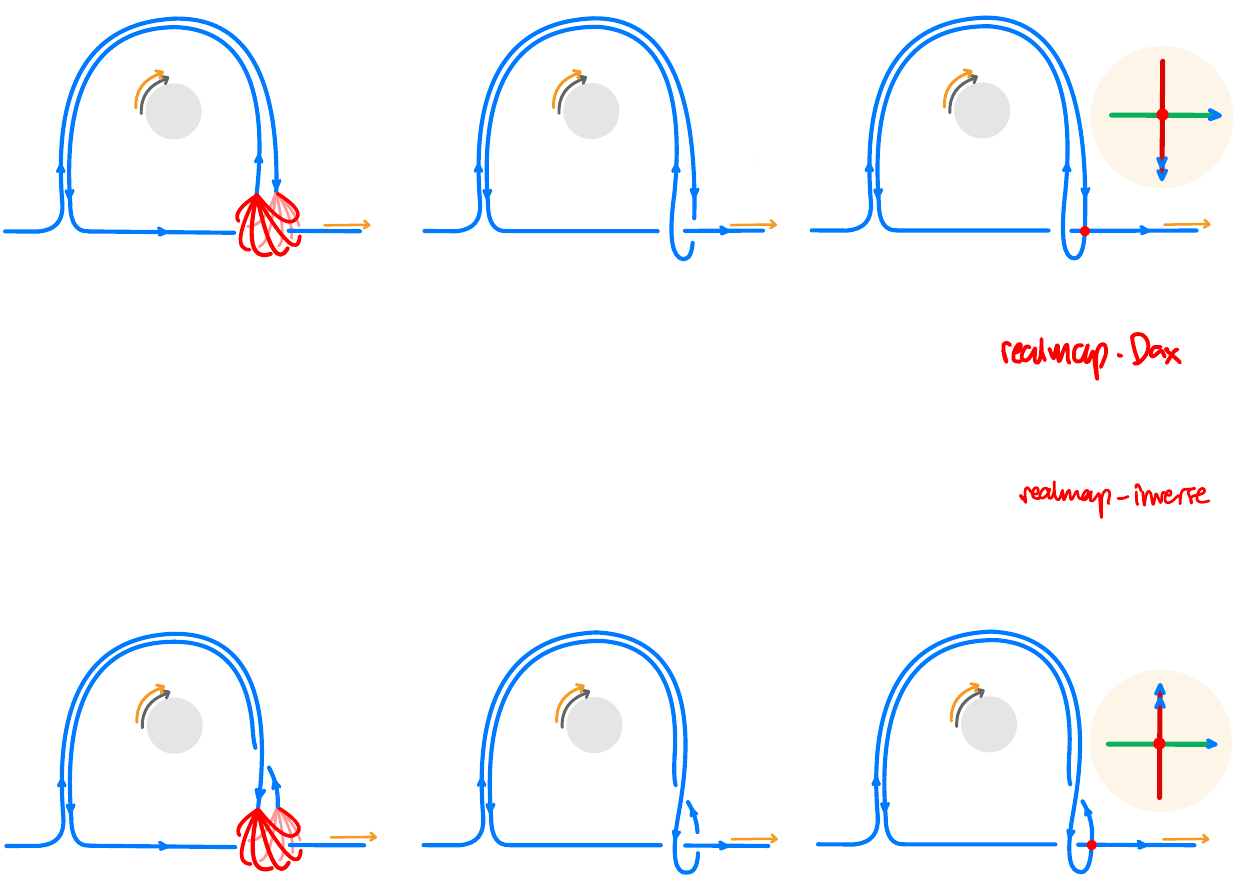}
    \caption{
        (i)~The family $\prealmap(h)$. 
        (ii)~The circle in this family completely contained in the present, $\{0\}\times\R^3$. 
        (ii)~The only immersed circle in the family $\prealmap(h)_{s,\tau}$, and the local picture around the double point.
    }
    \label{fig:realmap-Dax}
\end{figure}
    For an element $f\in\ker(\iota)=\im(\prealmap)$ we know that $f=\prealmap(\sum_i\epsilon_ih_i)=\prod_i\prealmap(h_i)^{\epsilon_i}$, and a nullhomotopy $F$ for $f$ with given properties exists: it can be built by running in parallel the nullhomotopies for each $\prealmap(h_i)$. The previous paragraph implies that the Algorithm gives the correct $\Da$ invariant. 
    
    Finally, we have to show that if $F$ is another nullhomotopy of $f$ with given properties, the Algorithm has the same output. The loops are defined in the same way as for $\Da$, so we need only check the signs. In the definition of $\Da$ they arise by comparing the standard basis $(e_s,e_\tau,w,x,y,z)$ of $\I\times\I\times\R^4$, where $\R^4$ is the chart in $\X$ around the double point $x_i^j$, to the basis obtained by taking the derivatives of $\wt{F}\colon\I\times\I\times\S^1\to \I\times\I\times \X$ at the two sheets:
    \[
        \left(\frac{\partial \wt{F}_1}{\partial s},\frac{\partial \wt{F}_1}{\partial \tau},\frac{\partial \wt{F}_1}{\partial \theta},
        \frac{\partial \wt{F}_2}{\partial s},\frac{\partial \wt{F}_2}{\partial \tau},\frac{\partial \wt{F}_2}{\partial \theta}\right).
    \]
    Let $F_m$ stand for the restriction of $F$ to the moving sheet, and $F_n$ to the non-moving. Note that our choice of $F$ is such that $\partial\wt{F}_n/\partial s=e_s$ and $\partial\wt{F}_n/\partial \tau=e_\tau$ and  $\partial\wt{F}_m/\partial s=e_s+w$, see the step (3) of the Algorithm. Then to find $\epsilon_2$ the step (5) of the Algorithm compares the standard basis $(w,x,y,z)$ of $\R\times\R^3\cong T\X|_{x_i^j}$ to the following:
        \begin{equation}\label{eq:alg-sign}
            \left(w,\frac{\partial F_m}{\partial \tau},\frac{\partial F_m}{\partial \theta},\frac{\partial F_n}{\partial \theta}\right).
        \end{equation}
    For $F_1=F_m$ the above basis $(e_s+w,\frac{\partial F_m}{\partial \tau},\frac{\partial F_m}{\partial \theta},e_s,e_\tau,\frac{\partial F_n}{\partial \theta})$ is positive if and only if the basis $(w,\frac{\partial F_m}{\partial \tau},\frac{\partial F_m}{\partial \theta},e_s,e_\tau,\frac{\partial F_n}{\partial \theta})$ is if and only if  $(e_s,e_\tau,w,\frac{\partial F_m}{\partial \tau},\frac{\partial F_m}{\partial \theta},\frac{\partial F_n}{\partial \theta})$ is if and only if \eqref{eq:alg-sign} is. Since $\epsilon_1=+1$ the signs agree. For $F_2=F_m$ we have $(e_s,e_\tau,\frac{\partial F_n}{\partial \theta},e_s+w,\frac{\partial F_m}{\partial \tau},\frac{\partial F_m}{\partial \theta})$, which is positive if and only if \eqref{eq:alg-sign} is negative (use three transpositions). Since in this case $\epsilon_1=-1$ the signs of $\Da$ and the Algorithm at the double point $x_i^j$ again agree.
\end{proof}

\begin{convention}\label{convention:Dax}
    The first sheet is depicted in \emph{red} and the second sheet in \emph{green}. If one more  double point is depicted, then its sheets are \emph{blue} and \emph{black} respectively. The moving sheet is denoted by a \emph{double} arrow tip, and the nonmoving sheet by a \emph{single} arrow tip. 
\end{convention}

In the next sections we will make extensive use of this algorithm for computing the Dax invariant, allowing us to express certain explicit knotted families in terms of grasper families. As a warm-up, let us apply the algorithm and this convention in the following proof.
\begin{lem}\label{lem:inverse}
    The class $\prealmap(-h)$ can be represented by the family that looks the same as for $\prealmap(h)$ except that either the arcs twirl from future to past, or there is a half-twist in the guiding finger as in Figure~\ref{fig:realmap-half-twist}, or the roles of the two meridians are exchanged as in Figure~\ref{fig:realmap-exchanged}.
\end{lem}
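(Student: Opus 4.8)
The plan is to verify all three descriptions of $\prealmap(-h)$ by computing the Dax invariant via Algorithm~\ref{alg} and checking that each candidate family has Dax invariant equal to $-h$; since all these families lie in $\ker(\iota)=\im(\prealmap)$ (being supported in a ball $\TG$ union a meridian sphere, hence evidently nullhomotopic as families of immersions), and since $\Da$ is an injective inverse to $\prealmap$ on its image, computing $\Da$ identifies the class uniquely. So for each picture I would run the same nullhomotopy as in the proof of the Algorithm -- pull apart the linking of the leaf sphere with the horizontal line, using the normal disk bounding the (now correctly signed) meridian -- track the single double point that appears, and read off its loop and sign.

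First I would treat the case of the \emph{negatively oriented leaf meridian}: this is exactly $\prealmap(-h)$ by Definition~\ref{def:realmap}, so there is nothing to check there, but it is worth noting that the nullhomotopy produces a double point with the same loop $hh^{-1}=1$ weighted by $h$ as before -- the difference is only that the leaf sphere is the $(-1)$-framed meridian, which is what Figure~\ref{fig:realmap-ext}(i) depicts. Second, for the \emph{half-twist in the guiding finger} (Figure~\ref{fig:realmap-half-twist}): here the leaf is the usual $(+1)$-meridian, but inserting a half-twist into the finger before twirling reverses the local orientation of the moving sheet relative to the normal framing used in step (5). Concretely, the half-twist changes $\epsilon_2$ (equivalently, it flips the sign of the $\tau$-derivative of the moving sheet against the moving-sheet tangent), so the double point now contributes $-1\cdot(hh^{-1})=-h$ rather than $+h$; the loop is unchanged because the finger is still traversed and then retraced. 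Third, for the \emph{exchanged meridians} (Figure~\ref{fig:realmap-exchanged}): now the root is at $p_1$ and the leaf at $p_0$, with the arc of $c$ traversed in the opposite direction, so the double point loop becomes $h^{-1}(h^{-1})^{-1}\cdot(\text{inverse of the earlier loop})$ -- more carefully, the ordering of sheets along $\S^1$ using the basepoint $e$ is reversed for the relevant double point, which both inverts the loop (giving $h^{-1}$, hence after the $\ol{\phantom{x}}$-involution the element $h$) and swaps which sheet is "first", flipping $\epsilon_1$; the net effect is a contribution of $-h$.

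The genuine content, and the step I expect to be the main obstacle, is the bookkeeping of signs and loops in the exchanged-meridian picture: one must be careful that exchanging root and leaf does not merely conjugate or invert the answer in an uncontrolled way, but produces exactly $-h$ in $\Z[\pi_1\X]/\ker(\prealmap)$. I would handle this by choosing the chart and the one-moving-sheet normalisation of step (3) of the Algorithm explicitly adapted to the exchanged picture, writing the double point loop starting from $c(e)$ exactly as prescribed (first time reaching $x_1$, then the other sheet in reversed orientation), and comparing the resulting ordered basis of $\{0\}\times\R^3$ to the standard one exactly as in \eqref{eq:alg-sign}. An efficient alternative, which I would use as a consistency check, is the relation $\prealmap(-h)=\prealmap(h)^{-1}$ from Remark~\ref{rem:half-twist}: reversing the loop parameter $s$ of the standard $\prealmap(h)$ family manifestly realises the half-twisted finger (a loop run backwards in $s$ is the same finger motion with reversed twirl, which is isotopic to inserting a half-twist), and a further isotopy sliding the finger over the top exchanges which meridian plays the role of leaf. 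So the three descriptions are mutually isotopic, and all represent $\prealmap(h)^{-1}=\prealmap(-h)$.
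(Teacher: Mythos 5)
Your general strategy is the right one and matches the paper's: run Algorithm~\ref{alg} on each picture, using that $\Da$ is a well-defined injective inverse of $\prealmap$ on $\ker(\iota)$, and check that each family has Dax invariant $-h$. But there are two real problems.

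First, you misread the first clause of the lemma. The first description is not ``negatively oriented leaf meridian'' — that is literally Definition~\ref{def:realmap} of $\prealmap(-h)$, so there is indeed ``nothing to check'' but also nothing proved. The first description is ``the arcs twirl from \emph{future to past}'', i.e.\ the same positively oriented leaf meridian, same finger, opposite sense of rotation in time. Your main argument never addresses this. The paper's treatment is one line: reversing the twirl direction violates step (3) of the Algorithm (the moving sheet's $s$-derivative now points into the past), so restoring the normalisation flips the sign and gives $-h$. Your closing ``consistency check'' gestures at this (``reversed twirl'') but only asserts, without argument, that reversing $s$ is isotopic to inserting a half-twist and that a further isotopy exchanges the meridians; those isotopies are exactly what the lemma is claiming, so this cannot substitute for a computation or an explicit isotopy.

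Second, your treatment of the exchanged-meridians case is incorrect in its reasoning even though it lands on the right answer. You assert that the $\S^1$-ordering of sheets is reversed and that the loop inverts to $h^{-1}$, then recover $h$ by ``the $\ol{\phantom{x}}$-involution''. None of this happens: the ordering on $\S^1$ is still governed by $c(e)<p_0<p_1$, so the first sheet is the one parameterised near $p_0$; in the exchanged picture that is the static horizontal strand, so $\epsilon_1=-1$ comes purely from the fact that the first sheet is no longer the moving one, while $\epsilon_2=+1$ is unchanged. Tracing the loop from $c(e)$ to the double point along the static strand and back along the reversed finger gives exactly $c(e)\to p_0$ on $c$, $p_0\to p_1$ along the bar, $p_1\to c(e)$ on $c^{-1}$, which is $h$, not $h^{-1}$. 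The Dax invariant takes values in $\Z[\pi_1\X]$ directly; no involution is applied. So the correct statement is: loop $=h$, sign $=(-1)(+1)=-1$, contribution $-h$, exactly as in the paper's proof. Your half-twist case is essentially right (the half-twist reverses the orientation of the moving sheet, flipping $\epsilon_2$).
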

\begin{proof}
    The claim for the exchanged past and future is immediate from the Algorithm~\ref{alg}: the condition (3) is violated, resulting in the opposite sign.

\begin{figure}[!htbp]
    \centering
    \includegraphics[width=0.95\linewidth]{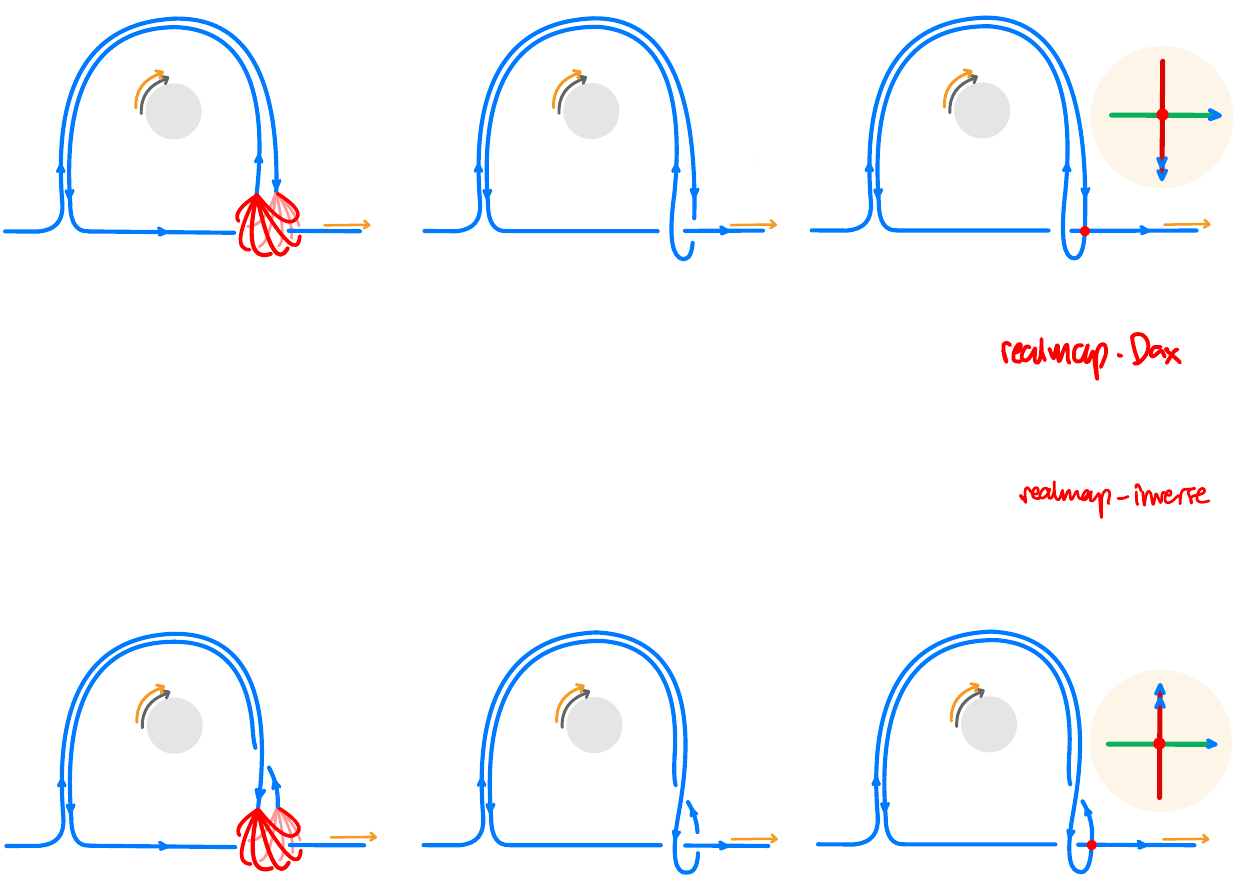}
    \caption{
        (i)~The family with a half-twist, representing $\prealmap(-h)$. 
        (ii)~The circle in this family completely contained in the present time, $\{0\}\times\R^3$. 
        (iii)~The only immersed circle in the trivialising family.
    }
    \label{fig:realmap-half-twist}
\end{figure}
    The half-twist in the finger corresponds to changing the orientations on all arcs in the foliation of the meridian sphere. Therefore, when computing $\Da$ of this family we get the same as for $\prealmap(h)$ (in Figure~\ref{fig:realmap-Dax}) except that the half-twist gives rise to the opposite orientation of the first (and moving) sheet, as in Figure~\ref{fig:realmap-half-twist}(iii). Therefore, $\epsilon_1=1$ and $\epsilon_2=-1$, so we obtain $-h$.

\begin{figure}[!htbp]
    \centering
    \includegraphics[width=0.9\linewidth]{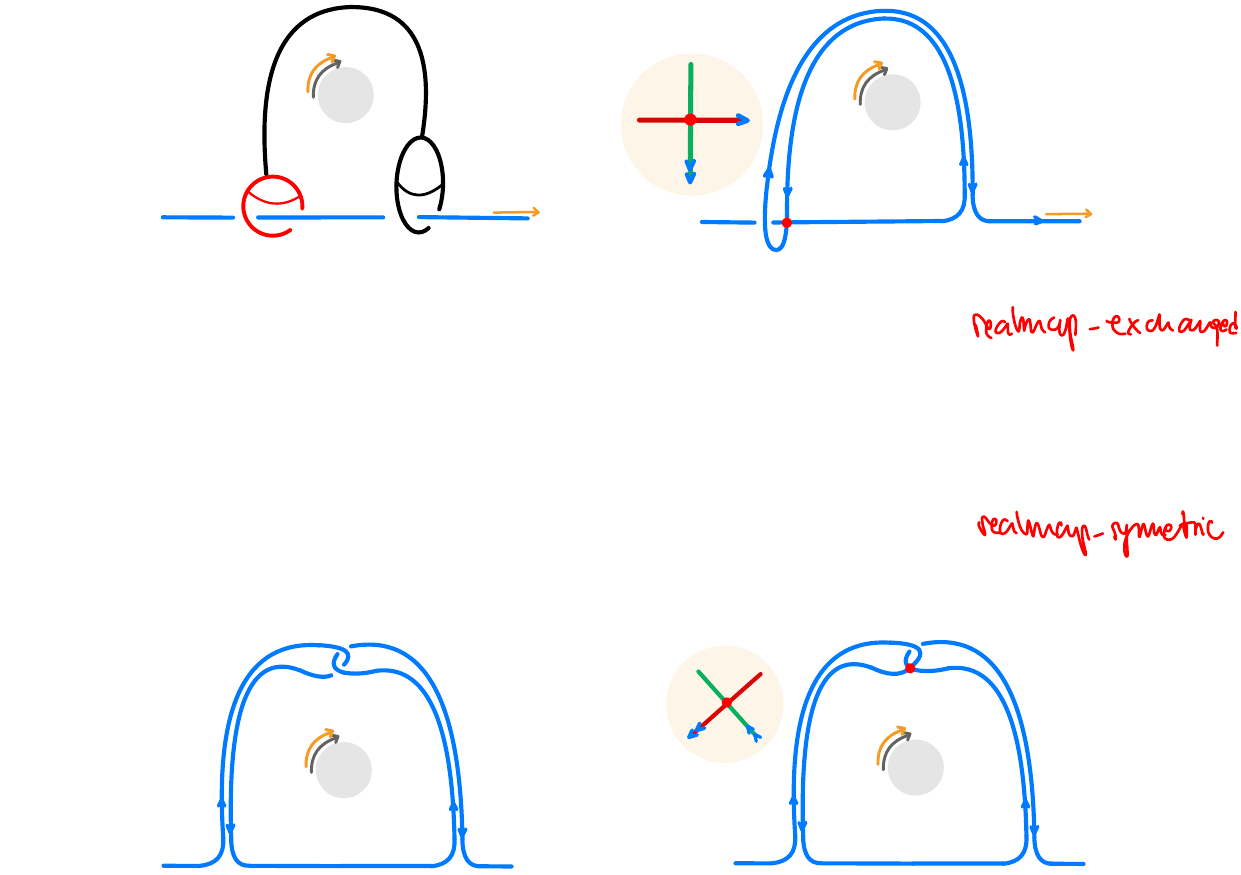}
    \caption{
        (i)~The family with exchanged meridians, representing $\prealmap(-h)$. 
        (ii)~The only immersed circle in the trivialising family.
    }
    \label{fig:realmap-exchanged}
\end{figure}
\begin{figure}[!htbp]
    \centering
    \includegraphics[width=0.9\linewidth]{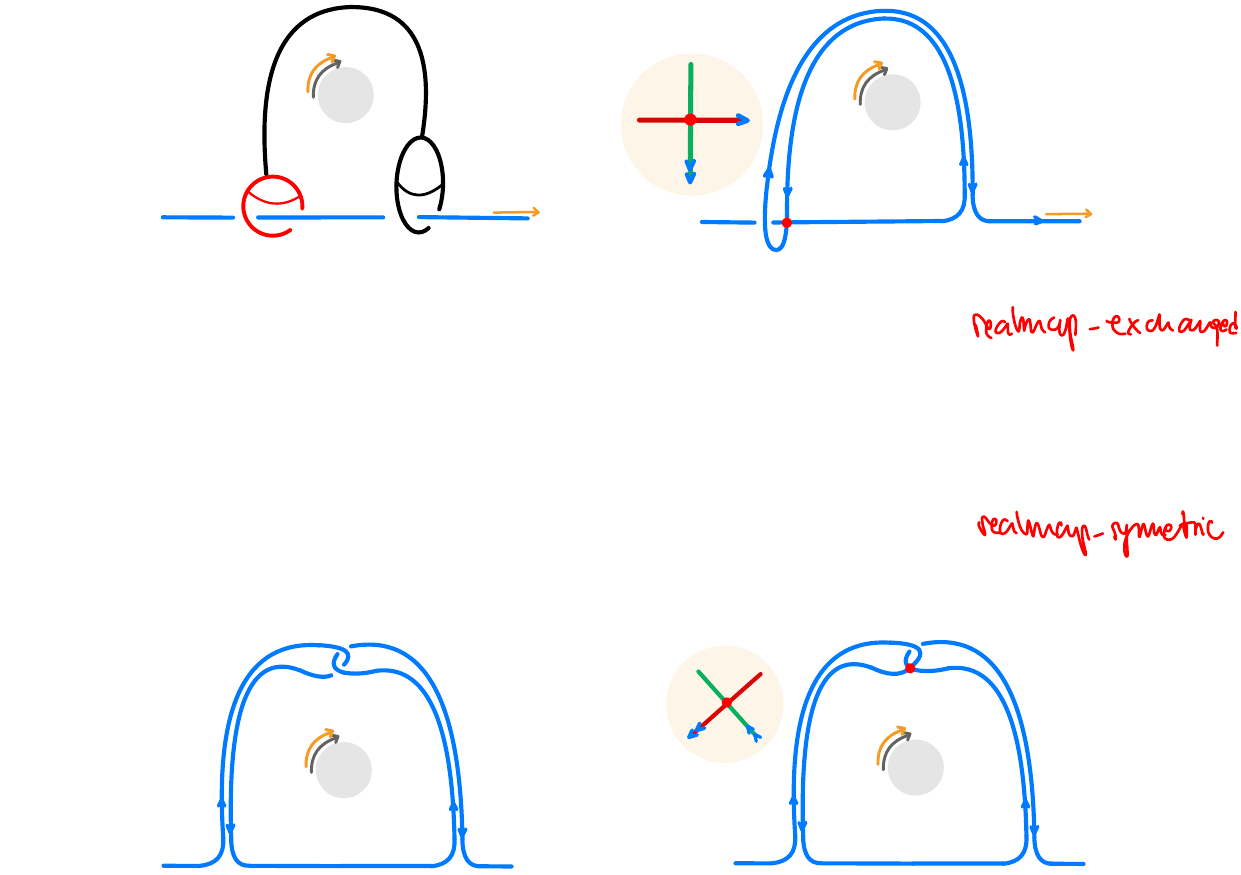}
    \caption{
        (i)~A circle in the symmetric family representing $\prealmap(h)$. The top of the finger on the left twirls from past to future, and the one on the right from future to past. 
        (ii)~The only immersed circle in the trivialising family. Note that both sheets are moving so the Algorithm has to be slightly adopted.
    }
    \label{fig:realmap-symmetric}
\end{figure}
    In Figure~\ref{fig:realmap-exchanged}(i) we have a picture as for $\prealmap(h)$ except that the roles of the two meridians are exchanged: now the piece of $c$ on the right is twirled around the sphere on the left. The Dax invariant is computed in Figure~\ref{fig:realmap-exchanged}(ii): the first sheet is now horizontal and is not moving, but the basis is still ``($\tau$-derivative, moving, nonmoving)'' as for $\prealmap(h)$. Thus  $\epsilon_1=-1$ and $\epsilon_2=1$, so we obtain $-h$.
\end{proof}

In particular, the first claim of the lemma implies that adding a full twist into the finger does not change the homotopy class of the family. The second claim gives rise to the following observation.
\begin{lem}\label{lem:symmetric}
   The class $\prealmap(h)$ can be represented by switching the roles of the meridians, and also changing the orientation of one meridian. Moreover, it can also be represented by the following more symmetric family: neighbourhoods of both $p_0$ and $p_1$ are moving, each on a sphere that has one hemisphere in the past, one in the future, and in the present we see the picture as in Figure~\ref{fig:realmap-symmetric}.
\end{lem}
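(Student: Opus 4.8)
The plan is to derive both assertions from Lemma~\ref{lem:inverse} and the double-point algorithm, without drawing new pictures. For the first assertion — that exchanging the two meridians and then reversing the orientation of one of them represents $\prealmap(h)$ — I would argue at the level of signs in Algorithm~\ref{alg}. By the last part of Lemma~\ref{lem:inverse}, exchanging the roles of the root and leaf meridians in the standard picture for $\prealmap(h)$ already represents $\prealmap(-h)$: in the trivialising homotopy the unique double point now has the horizontal strand of $c$, rather than the vertical finger, as its moving sheet, so $\epsilon_1$ flips while the double point loop and $\epsilon_2$ stay fixed. I would then observe that additionally reversing the orientation of the twirling meridian sphere reverses the orientation of that moving sheet — exactly as for the half-twist in the proof of Lemma~\ref{lem:inverse} — hence flips $\epsilon_2$ too. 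The two modifications together multiply the computed Dax invariant by $(-1)^2=1$, so the family represents $\prealmap(h)$, since $\Da$ is the inverse of $\prealmap$ on $\im(\prealmap)=\ker(\iota)$ and both families plainly lie in $\ker(\iota)$. (Equivalently: reversing the leaf meridian is, by Definition~\ref{def:realmap}, precisely the move relating $\prealmap(\mp h)$ to $\prealmap(\pm h)$, so it cancels the sign introduced by the exchange.)

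For the symmetric family I would again compute $\Da$ and invoke that it inverts $\prealmap$. First I would check the preliminaries: the family in Figure~\ref{fig:realmap-symmetric}(i) is a genuine loop of embeddings based at $c$ (the two moving fingers are kept disjoint), and like any grasper picture it becomes constant through immersions once the linkings of the moving fingers with the meridian spheres are undone through their filling balls, so $\iota$ of it vanishes and $\Da$ is defined on it. Then I would run Algorithm~\ref{alg} on that trivialising homotopy: there is a single immersed circle carrying one double point $x_1$, and reading off its loop exactly as in the computation $\Da(\prealmap(h))=h$ from the Proof of the Algorithm gives $h$ once more — one still traverses $h$ going out along one finger and returns trivially along the other.

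The hard part will be the sign at $x_1$. Since both sheets now move, step~(3) of the Algorithm cannot be arranged, so I would fall back on the determinant formulation from the Proof of the Algorithm, comparing the standard basis of $\I\times\I\times\R^4$ around $x_1$ with
\[
    \left(\frac{\partial \wt{F}_1}{\partial s},\frac{\partial \wt{F}_1}{\partial \tau},\frac{\partial \wt{F}_1}{\partial \theta},
    \frac{\partial \wt{F}_2}{\partial s},\frac{\partial \wt{F}_2}{\partial \tau},\frac{\partial \wt{F}_2}{\partial \theta}\right).
\]
The geometric input — visible in Figure~\ref{fig:realmap-symmetric}(i) — is that the left finger twirls from past to future while the right one twirls from future to past, so in a suitable chart around $x_1$ one can arrange $\partial \wt{F}_1/\partial s=e_s+w$ and $\partial \wt{F}_2/\partial s=e_s-w$ for the direction $w$ along which the two sheets separate. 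I would expand this $6\times 6$ determinant, carefully tracking the transpositions needed to move the $w$-components past the sheet-tangent vectors, and conclude that the sign equals $+1$, exactly as for $\prealmap(h)$. This yields $\Da$ of the symmetric family equal to $h$, hence the symmetric family represents $\prealmap(h)$. The only place where mistakes are likely is precisely this orientation bookkeeping, so I would cross-check it against the two sign computations already carried out for $\prealmap(-h)$ in the proof of Lemma~\ref{lem:inverse}.
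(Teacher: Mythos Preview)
Your proposal is correct and matches the paper's approach: the paper in fact gives no separate proof of Lemma~\ref{lem:symmetric}, stating it as an immediate observation from Lemma~\ref{lem:inverse} (with the caption of Figure~\ref{fig:realmap-symmetric} noting only that the Algorithm must be ``slightly adapted'' since both sheets move). Your argument spells out exactly those details --- the double sign flip for the first assertion via Definition~\ref{def:realmap} and the exchanged-meridian case of Lemma~\ref{lem:inverse}, and the fallback to the $6\times6$ determinant from the Proof of the Algorithm for the symmetric family --- so you are simply making explicit what the paper leaves to the reader.
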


\subsection{The dax homomorphism}\label{subsec:dax}
The kernel of $\prealmap$ can be expressed using the Dax homomorphism 
\[
\dax_u\colon\pi_3(X\sm\D^4)\to\Z[\pi_1X\sm1]
\]
which is an invariant of self-isotopies of arcs. Namely, we first remove a small ball around $c(e)\in X$ to obtain $u\colon\D^1\hra X\sm\D^4$. Then $\dax_u(a)$ counts double points of arcs that form a 2-parameter family that foliates the given $a\colon\S^3\to \X\sm\D^4$, based at $u$. We refer to \cite{K-Dax} for details.

    In particular \cite[Cor.1.4,Lem.4.9]{K-Dax} shows that for any $h\in\pi_1\X$ we have
    \begin{equation}\label{eq:the-formula}
        h^{-1}+h\bc^{-1}\in \ker(\prealmap).
    \end{equation}
    Since also $1\in\ker(\prealmap)$ by \cite{KT-highd}, we also have $\bc^k\in\ker(\prealmap)$ for all $k\in\Z$.

\begin{lem}\label{lem:g-in-kerr}
    If $\X=\M\#\S^1\tm\S^3$ and $c=\S^1\tm\{pt\}$, then we have
    \[
        \Z[\pi_1\M]<\ker(\prealmap).
    \]
\end{lem}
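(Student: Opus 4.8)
The plan is to pin down enough of $\ker(\prealmap)$ using the Dax homomorphism. It is worth noting first that the formula \eqref{eq:the-formula} on its own is \emph{not} enough: the subgroup $K\subseteq\Z[\pi_1\X]$ generated by $1$, the $\bc^k$, and the elements $h^{-1}+h\bc^{-1}$ does not contain a nontrivial $g\in\pi_1\M$ (a witnessing homomorphism $\Z[\pi_1\X]\to\Z$ is $w\mapsto(-1)^{\epsilon(w)}\#_{\pi_1\M}(w)$, where $\epsilon$ is the total $\bc$-exponent and $\#_{\pi_1\M}$ the number of $\pi_1\M$-syllables of a reduced word in $\pi_1\M\ast\Z$: this kills $1$, every $\bc^k$, and every $h^{-1}+h\bc^{-1}$, but sends $g\mapsto 1$). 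So a genuinely three-dimensional input is needed, namely the description of $\ker(\prealmap)$ via $\dax_u$ from \cite{K-Dax}, of which \eqref{eq:the-formula} is a special case: for every $a\in\pi_3(\X\smallsetminus\D^4)$ one has $\dax_u(a)\in\ker(\prealmap)$, where we view $\Z[\pi_1\X\smallsetminus 1]\subseteq\Z[\pi_1\X]$.

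Granting this, it suffices to produce, for each $h\in\pi_1\M\smallsetminus\{1\}$, a class $a_h\in\pi_3(\X\smallsetminus\D^4)$ with $\dax_u(a_h)=h$ (or, more flexibly, $\dax_u(a_h)=h+r$ with $r$ already known to lie in $\ker(\prealmap)$, e.g.\ $r\in\langle 1,\bc^k\rangle$). Indeed, then $\ker(\prealmap)$ contains every basis element of $\Z[\pi_1\M]\subseteq\Z[\pi_1\X]$ — the element $1$ by \cite{KT-highd}, and each $h\neq 1$ by the above — and, being a subgroup, it contains the whole $\Z$-span $\Z[\pi_1\M]$. (If the statement from \cite{K-Dax} is phrased using the involution $\overline{\phantom{x}}$ this is harmless, since $\pi_1\M$ is closed under inversion.)

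The geometric input is the embedded $3$-sphere $B=\{pt'\}\times\S^3\subseteq\S^1\times\S^3\subseteq\X$ dual to $c$, which meets $c$ transversely in one point. The class $a_h$ I would use is a ``decoration of $B$ by $h$'': take the standard foliation of $B$ by arcs based at $u$ and let it interact, via a finger/tube move guided by a loop in the $\M$-summand representing $h$, with the strand of $c$ dual to $B$. This is exactly the local model underlying the $\S^1\times\S^3$-computation of \cite{K-Dax} (which recovers $\ker(\prealmap_{\S^1\times\S^3,c})=\langle t^{-k}+t^{k-1},1\rangle$), now carried around $h\in\pi_1\M$; running the arc-foliation should produce a short explicit list of singular arcs whose double-point loops are $h$ together with powers of $\bc$, the latter lying in $\ker(\prealmap)$ and hence discardable.

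The main obstacle is precisely this last step: realising $a_h$ as an honest embedded $3$-sphere foliated by arcs based at $u$, and evaluating $\dax_u(a_h)$ with Algorithm~\ref{alg}-style bookkeeping — verifying that the double-point loops are exactly $h$ (not $h\bc^{\pm1}$ or a conjugate of $h$) and that the signs are as claimed, so that nothing collapses $h$ to $0$ and there is no residual sign ambiguity in $\Z[\pi_1\X]$. Everything else — additivity of $\dax_u$, the containment $\dax_u(-)\in\ker(\prealmap)$, and the subgroup property of $\ker(\prealmap)$ — is formal.
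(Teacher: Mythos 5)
Your preliminary observation is correct and well made: the relations~\eqref{eq:the-formula} together with $1,\bc^k\in\ker(\prealmap)$ do not account for a nontrivial $g\in\pi_1\M$, as your witnessing homomorphism $w\mapsto(-1)^{\epsilon(w)}\#_{\pi_1\M}(w)$ on $\Z[\pi_1\M\ast\Z]$ shows, so additional geometric input is genuinely required. However, the route you take to supply it --- realising each $g$ as $\dax_u(a_g)$ for some $a_g\in\pi_3(\X\sm\D^4)$ --- is not the paper's, and it is left with a real gap at the decisive step. You would need to exhibit $a_g$ with $\dax_u(a_g)=g$ modulo already-known kernel elements, and it is far from clear your sketch of a ``decorated dual sphere'' delivers this: the formula $\dax(ha)=h\dax(a)h^{-1}-\lambdabar(ha,h)-\ol{\lambdabar(ha,h)}$, applied to an embedded sphere, only ever produces elements fixed by the involution $\ol{\phantom{x}}$, so the obvious construction outputs symmetric combinations like $g+g^{-1}$ rather than $g$ itself, and getting from there to $g$ would need yet further relations whose membership in $\ker(\prealmap)$ is exactly what is at issue.

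The paper avoids $\dax_u$ entirely and is much shorter. It uses the dual $3$-sphere $G=\{pt\}\times\S^3$ geometrically rather than via its $\pi_3$-class: since $G$ meets $c$ transversely in one point, $G$ decomposes as a small meridian ball of $c$ union an embedded $3$-ball $B\subset\X\sm c$, and one may take the leaf sphere of the grasper representing $\prealmap(g)$ to be $\partial B$. Because $g\in\pi_1\M$, the guiding arc can be chosen inside the $\M$-summand, disjoint from $G$. Pushing the twirling arcs of the family $\prealmap(g)$ through the ball $B$ then gives a homotopy to the constant family that stays embedded throughout --- $B$ is disjoint from $c$ and from the guiding arc, so no new self-intersections can appear. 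Hence $\prealmap(g)=0$ directly, with no invariant computation. I would recommend replacing your $\pi_3$-realisation plan with this direct nullhomotopy.
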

\begin{proof}
    In other words, if $g\in\pi_1\M$ we need to show that $\prealmap(g)=0$. Note that $G=\{pt\}\tm\S^3$ is an embedded 3-sphere that intersects $c$ in exactly one point. Thus, $G$ is the union of a small meridian ball of $c$ and a ball $B\subset\X\sm c$. Moreover, the leaf of $\prealmap(g)$ is precisely a meridian sphere $\partial B$, and the guiding arc of $\prealmap(g)$ follows $g\in\pi_1M$, which can be assumed to miss $G$. This means that we can undo the family $\prealmap(g)$ using $B$, since pushing our arcs through $B$ cannot create self-intersections.
\end{proof}

\section{Knotted families from general graspers}
\label{sec:general-graspers}

Note that for our simple grasper $\TG$ in a 4-manifold $X$ on $c\colon\S^1\hra \X$, when we defined the family $\prealmap(\pm h)$ we used two small meridian spheres to $c$, a root and a leaf. The self-referential families considered in~\cite{Gabai-disks,Budney-Gabai} generalise this by allowing the leaf to link $c$ more than once. More generally, we can allow the leaf sphere to be any embedded 2-sphere $L$ in $\X\sm\nu c$, and the root to be an embedded 3-ball $\wh{R}$ in $\X$ disjoint from $L$ and intersecting $c$ in finitely many points. To makes sense of the grasper family we require that the leaf is framed and parameterised, so that we know how to foliate it; that is, we fix a foliation of $\S^2$ and use the embedding $L$ to foliate $L(\S^2)$.

\begin{defn}\label{def:general-grasper}
   A  \emph{general grasper} $\TG$ in a 4-manifold $X$ on $c\colon\S^1\hra \X$ is the object that consists of an embedding $\wh{R}\colon\D^3\hra\X$, called the \emph{root} ball, that intersects $c$ transversely in finitely many points, an embedding $L\colon \S^2\hra \X\sm\nu c$, called the \emph{leaf} sphere, together with an embedded arc that connects $\wh{R}$ and $L$ and has interior disjoint from them, called the \emph{bar}. 
   
   Moreover, we assume that $\TG$ is in fact a thickening of all these objects, i.e.\ their trivialised tubular neighbourhood; in particular $\nu\wh{R}$ intersects $c$ in finitely many arcs, and $\nu L$ is a framed sphere.

   The associated grasper family $\prealmap^{\TG}\in\pi_1(\Emb(\S^1,\X);c)$ is defined similarly as in Definition~\ref{def:realmap}, by twirling simultaneously each arc $\nu\wh{R}\cap c$ around a parallel copy of $L$ in $\nu L$.
\end{defn}

Inspired by the clasper theory (compare with simple leaves in \cite{Habiro}), we introduce the following.

\begin{defn}\label{def:root-leaf}
    Let $\TG$ be a general grasper in $\X$ on $c$ with the root ball $\wh{R}$ and the leaf sphere $L$.
    \begin{itemize}
        \item  $\wh{R}$ is \emph{simple} if it intersects $c$ in exactly one point. 
        \item  $L$ is \emph{null} if it is the boundary of a map $\wh{L}\colon\D^3\to \X$.
        \item  $L$ is \emph{unknotted} if it bounds an embedded ball $\wh{L}\colon\D^3\hra \X$.
        \item  $L$ is \emph{semisimple} if it bounds an embedded ball $\wh{L}$ that does not intersect the bar of $\TG$.
        \item  $L$ is \emph{simple} if it bounds an embedded ball $\wh{L}$ that does not intersect the bar of $\TG$ and intersects $c$ in exactly one point. 
        \qedhere
    \end{itemize}
\end{defn}

Note that if a general grasper has both root and leaf simple, then it is isotopic to a simple grasper that gives an equivalent family of circles.
Moreover, since $\pi_1(\Emb(\S^1,\X);c)$ is an extension~\eqref{eq:Dax-extension} of $\im(\prealmap)$ by $\pi_1(\Imm(\S^1,\X);c)$, each general grasper family has a description in terms of simple graspers and also classes in $\pi_2\X$ and $\pi_1\X$. This extension is usually nontrivial~\cite{KT-4dLBT}, and we leave it to future work the study of these families and their relations.

In this section we consider two particular cases of general graspers: in Section~\ref{subsec:selfref} those with semisimple leaves, called \emph{semisimple graspers}, and in Section~\ref{subsec:simple-null-graspers} those with simple roots and null leaves, called \emph{simple-null graspers}. We express them as linear combinations of simple graspers, that is, in terms of $\prealmap$ (see Theorems~\ref{thm:selfref} and~\ref{thm:simple-null-graspers}). In the next sections these classes will be related to constructions of Watanabe, Budney--Gabai, and Gay--Hartman.
 

\subsection{Semisimple graspers}
\label{subsec:selfref}

The self-referential grasper of Gabai~\cite[Fig.9]{Gabai-disks} is the general grasper depicted in Figure~\ref{fig:sref}(ii): the root is at the tip of the blue finger and is simple, whereas the leaf sphere is a meridian of the finger and is semisimple. If the group element determined by the bar is $h\in\pi_1M$, we denote this family by $\sref^\circlearrowright(h)$ -- as only the ``top'', the leaf, is not simple.

In Figure~\ref{fig:sref}(i) the picture is mirrored, but we will see that this family $\sref^\circlearrowleft(h)$ is isotopic to $\sref^\circlearrowright(h)$.

Next, in Figure~\ref{fig:sref2}(i) we have a picture of $\sref_\circlearrowright(h)$. This is similar to Figure~\ref{fig:sref}(i), but here the root is nonsimple instead: the two parallel strands (a part of finger's body) twirl around the leaf sphere which is now simple; note that the tip of the finger stands still.

Finally, the family $\sref^\circlearrowright_\circlearrowright(h)$ from  Figure~\ref{fig:sref2}(ii) has both root and leaf nonsimple.

\begin{thm}\label{thm:selfref}
    The five families $\sref^\circlearrowleft(h)$, $\sref^\circlearrowright(h)$, $\sref^\circlearrowright_\circlearrowright(h)$, $\sref_\circlearrowright(h)$, $\prealmap(h+h^{-1})$ are all isotopic, i.e.\ define the same class in $\pi_1(\Emb(\S^1,\X);c)$.
\end{thm}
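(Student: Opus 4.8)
The plan is to reduce the entire statement to a single Dax-invariant computation, using the extension~\eqref{eq:Dax-extension} and Algorithm~\ref{alg}. First I would check that each of the five families lies in $\ker(\iota)=\im(\prealmap)$, i.e.\ is null-homotopic through \emph{immersions}. For $\prealmap(h+h^{-1})$ this is immediate. For each semisimple grasper it follows from the defining property of a semisimple leaf: the leaf sphere bounds an embedded $3$-ball $\wh L$ whose interior is disjoint from the bar, so the arcs that twirl around a parallel copy of the leaf can be pushed straight across $\wh L$; this homotopy stays in $\Imm(\S^1,\X)$ and never moves the bar, and afterwards the family has become ``drag a finger out along the bar and back'', which is null-homotopic. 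Since $\wh L$ may meet $c$ in several points, the intermediate circles are genuinely immersed, as needed.

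Second, recall from \cite{KT-highd} (as recorded around~\eqref{eq:Dax-extension}) that $\prealmap$ restricts to an injection $\Z[\pi_1\X]/\ker(\prealmap)\hookrightarrow\pi_1(\Emb(\S^1,\X);c)$ with image $\ker(\iota)$, and that $\Da$ is a two-sided inverse on that image; hence $\Da$ is a bijection $\ker(\iota)\to\Z[\pi_1\X]/\ker(\prealmap)$, so two classes in $\ker(\iota)$ coincide as soon as their Dax invariants agree. By additivity of $\prealmap$ (Remark~\ref{rem:half-twist}) and the identity $\Da(\prealmap(k))=k$ proved alongside Algorithm~\ref{alg}, we have $\Da(\prealmap(h+h^{-1}))=h+h^{-1}$. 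It therefore remains only to run Algorithm~\ref{alg} on the null-homotopies from the first step and check that each semisimple grasper has Dax invariant $h+h^{-1}$ as well.

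Third I would do the double-point bookkeeping, starting with $\sref^\circlearrowright(h)$ of Figure~\ref{fig:sref}(ii). In the chosen null-homotopy the only immersed circles appear as the twirled tip of the blue finger is pushed across the ball $\wh L$ bounding the leaf (a meridian of the finger body); the moving sheet is that tip, which reaches a double point only after running once around $h$, while the stationary sheet runs along the finger body. Tracking $F_{s_j,\tau_j}$ as in steps (4)--(5) of the Algorithm, and using Convention~\ref{convention:Dax} to identify the moving sheet, one reads off exactly two double points, with double-point loops $h$ and $h^{-1}$ and both signs $+1$ (the orientation check being of the same type as in Lemma~\ref{lem:inverse}), so $\Da(\sref^\circlearrowright(h))=h+h^{-1}$. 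The mirrored family $\sref^\circlearrowleft(h)$ and the families $\sref_\circlearrowright(h)$, $\sref^\circlearrowright_\circlearrowright(h)$ of Figure~\ref{fig:sref2}, where the roles of root and leaf are swapped, are treated identically; in fact the mirror and the root--leaf interchange can alternatively be realised by honest ambient isotopies as in Lemma~\ref{lem:symmetric}, but either way the same two double points appear and give $h+h^{-1}$.

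The main obstacle is the case $\sref^\circlearrowright_\circlearrowright(h)$, where both root and leaf are nonsimple: there the null-homotopy is the most complicated, and before invoking the Algorithm one must put it in generic position (finitely many immersed times, one sheet stationary at each double point, $s$-derivative of the moving sheet as in step~(3)) and verify that the additional double points arising from the several points of $\wh L\cap c$ either carry trivial double-point loop or cancel in pairs. The truly delicate point throughout is the sign: one must be sure the answer is $+h+h^{-1}$ rather than $h-h^{-1}$, which --- as the proofs of Lemmas~\ref{lem:inverse} and~\ref{lem:symmetric} already illustrate --- comes down entirely to the orientation conventions for the chosen foliation of the twirling sphere.
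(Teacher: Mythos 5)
Your approach is exactly the paper's: observe that each family lies in $\ker(\iota)=\im(\prealmap)$, that $\Da$ is a two-sided inverse of $\prealmap$ there, and then compute $\Da$ of each family via Algorithm~\ref{alg} and the null-homotopy given by pushing the twirled arc(s) across the ball $\wh L$ filling the leaf. Your bookkeeping for $\sref^\circlearrowright(h)$, $\sref^\circlearrowleft(h)$, $\sref_\circlearrowright(h)$ (two double points each, loops $h$ and $h^{-1}$, both signs $+$) matches Figures~\ref{fig:sref-Dax} and \ref{fig:sref-Dax2}(i). The one place you stop short is $\sref^\circlearrowright_\circlearrowright(h)$, which you rightly flag as the delicate case; the paper carries this out in Figure~\ref{fig:sref-Dax2}(ii), finding four double points contributing $h-h+h^{-1}+h=h+h^{-1}$. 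Note that your guess that the extra double points ``carry trivial double-point loop'' is not what happens: the two extra double points each have loop $h$ (coming from traversing the first finger, which contributes $hh^{-1}=1$ after the initial $h$), but opposite signs, so it is the ``cancel in pairs'' alternative that applies. Your remark that the mirror and root--leaf interchange can alternatively be realised by explicit ambient isotopies is also the content of the paper's Remark~\ref{rem:isotopies}.
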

The proof is below. By \eqref{eq:the-formula} we have the relations $\prealmap(h^{-1})=\prealmap(-ht^{-1})$ and $\prealmap(1)=0$. In particular, $\prealmap(t^{-1})=\prealmap(1)=0$, and we derive the following result, that will be used later.
\begin{cor}\label{cor:selfref}
    We have $\sref^\circlearrowright_\circlearrowright(1)=\sref^\circlearrowright(1)=\prealmap(2)=0$  and $\sref^\circlearrowright_\circlearrowright(t)=\sref^\circlearrowright(t)=\sref^\circlearrowright(t^{-1})=\prealmap(t)$.
\end{cor}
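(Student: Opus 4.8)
The plan is to reduce the corollary entirely to Theorem~\ref{thm:selfref}, which for every $h\in\pi_1\X$ identifies the semisimple grasper classes $\sref^\circlearrowright_\circlearrowright(h)$ and $\sref^\circlearrowright(h)$ with the simple grasper class $\prealmap(h+h^{-1})$; after that it is just a matter of evaluating $\prealmap$ on $1+1$ and on $t+t^{-1}$ using the kernel relations recorded just above the corollary.

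First I would take $h=1$ in Theorem~\ref{thm:selfref}. Since $1^{-1}=1$, this gives $\sref^\circlearrowright_\circlearrowright(1)=\sref^\circlearrowright(1)=\prealmap(1+1)=\prealmap(2)$. As $1\in\ker(\prealmap)$ by \cite{KT-highd} and $\ker(\prealmap)$ is a subgroup of $\Z[\pi_1\X]$, also $2\in\ker(\prealmap)$, so $\prealmap(2)=0$. This establishes the first string of equalities.

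Next I would take $h=t$, and separately $h=t^{-1}$, in Theorem~\ref{thm:selfref}. This yields $\sref^\circlearrowright_\circlearrowright(t)=\sref^\circlearrowright(t)=\prealmap(t+t^{-1})$ and $\sref^\circlearrowright(t^{-1})=\prealmap(t^{-1}+t)$, which is the same element $\prealmap(t+t^{-1})$. Since $\prealmap$ is a homomorphism and $\prealmap(t^{-1})=0$ — this follows from \eqref{eq:the-formula} with $h=t$, which gives $t^{-1}+1\in\ker(\prealmap)$, combined with $1\in\ker(\prealmap)$ — we get $\prealmap(t+t^{-1})=\prealmap(t)\,\prealmap(t^{-1})=\prealmap(t)$. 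Assembling the two cases gives the claimed equalities $\sref^\circlearrowright_\circlearrowright(t)=\sref^\circlearrowright(t)=\sref^\circlearrowright(t^{-1})=\prealmap(t)$.

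At the level of the corollary there is no real obstacle: it is a formal consequence of Theorem~\ref{thm:selfref} together with the already-established facts $\prealmap(1)=0$ and $t^{-1}\in\ker(\prealmap)$, i.e.\ pure bookkeeping in the abelian group $\Z[\pi_1\X]/\ker(\prealmap)$. The genuinely hard input is Theorem~\ref{thm:selfref} itself — producing the explicit isotopies between the variously decorated semisimple graspers and checking via the Dax Algorithm~\ref{alg} that the resulting simple-grasper coefficient is exactly $h+h^{-1}$ — but that theorem is stated before the corollary and so may be assumed here.
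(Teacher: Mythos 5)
Your proof is correct and follows exactly the route the paper takes: the corollary is derived in the paragraph immediately before it from Theorem~\ref{thm:selfref} together with the kernel relations $1\in\ker(\prealmap)$ and $t^{-1}+1\in\ker(\prealmap)$ (the latter being \eqref{eq:the-formula} specialised to $\bc=t$). The only cosmetic difference is that you write the target group multiplicatively ($\prealmap(t)\,\prealmap(t^{-1})$), which is fine since $\prealmap$ is a homomorphism from the additive group $\Z[\pi_1\X]$.
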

\begin{proof}[Proof of Theorem~\ref{thm:selfref}]
    Since the Dax invariant is the inverse of $\prealmap$, it suffices to show that the value of $\Da$ for each family is $h+h^{-1}$. We use the Algorithm~\ref{alg} and Convention~\ref{convention:Dax} from Section~\ref{subsec:Dax-invt} to compute these values.
    
    For $\sref^\circlearrowright(h)$ we have by \cite[506-509]{Gabai-disks} or \cite[Rem.2.9]{K-Dax} (see also \cite[Fig.4]{K-Dax}):
\[
    \Da(\sref^\circlearrowright(h))= h+h^{-1}.
\]
    However, let us give an argument for completeness. A nullhomotopy of the family $\sref^\circlearrowright(h)$ is given by pulling apart the linking of the red sphere with the body of the finger, using the meridian ball. Two double points $x_1,x_2$ occur at a single time moment $j=1$, as in Figure~\ref{fig:sref-Dax}(ii). To compute the loops, note that the first sheet for $x_1$ is on the body of the finger, and $h_1=h^{-1}$, whereas the first sheet for $x_2$ is on the tip of the finger with the loop $h_2=h$. Both signs are positive, see the caption of Figure~\ref{fig:sref-Dax}(ii).
\begin{figure}[!htbp]
    \centering
    \includegraphics[width=0.95\linewidth]{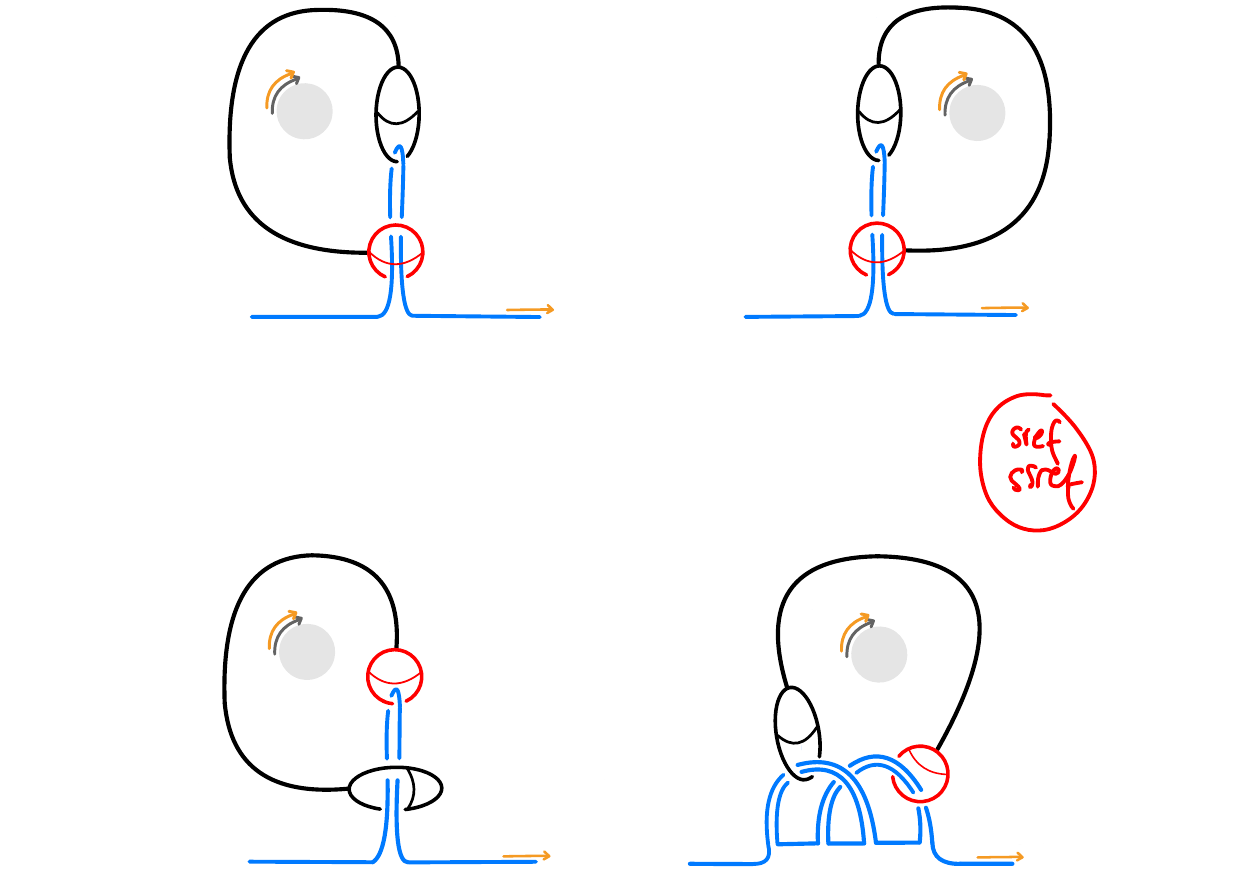}
    
    \vspace{20pt}
    
    \includegraphics[width=0.95\linewidth]{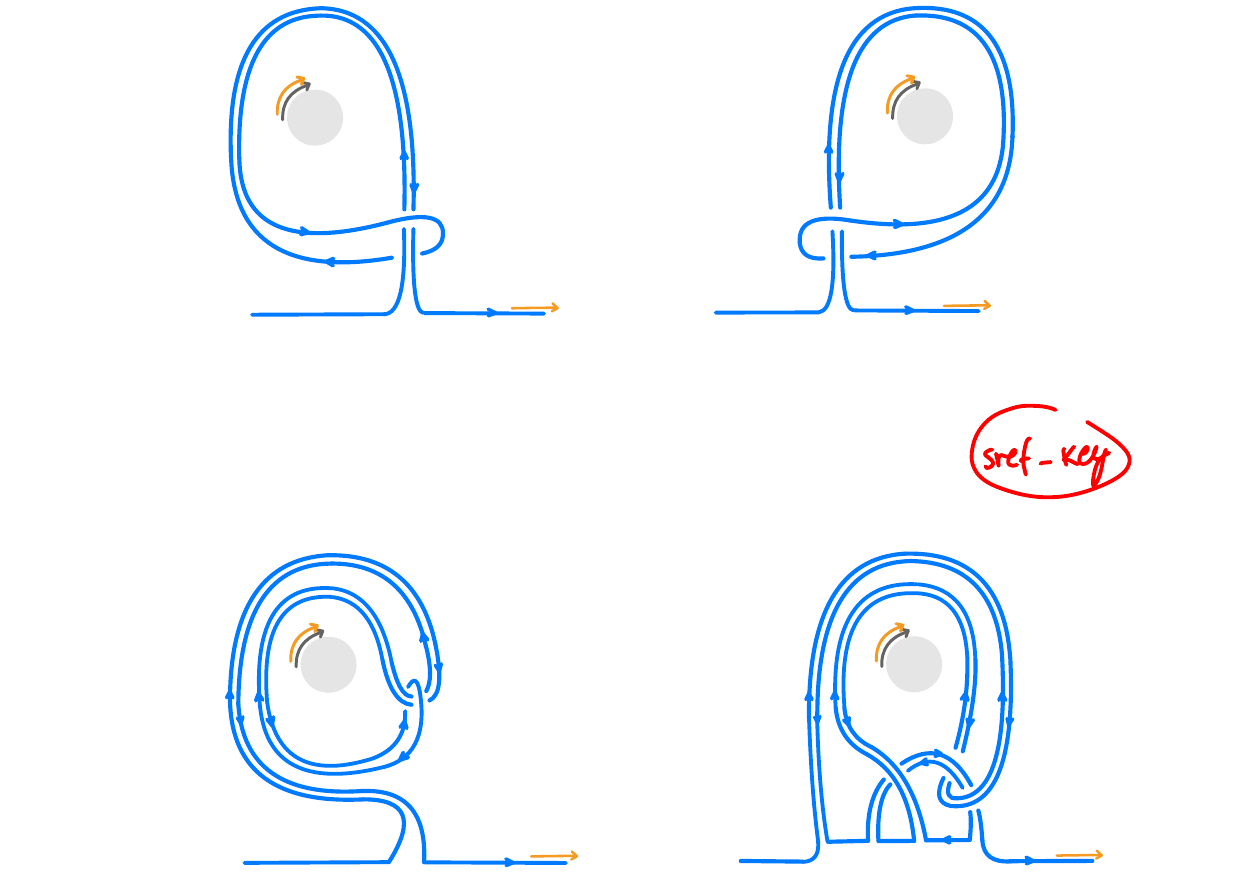}
    \caption{
        The semisimple families $\sref^\circlearrowleft(h)$ and $\sref^\circlearrowright(h)$ for some $h\in\pi_1(\X)$, and their key members.
    }
    \label{fig:sref}
\end{figure}
\begin{figure}[!htbp]
    \centering
    \includegraphics[width=0.95\linewidth]{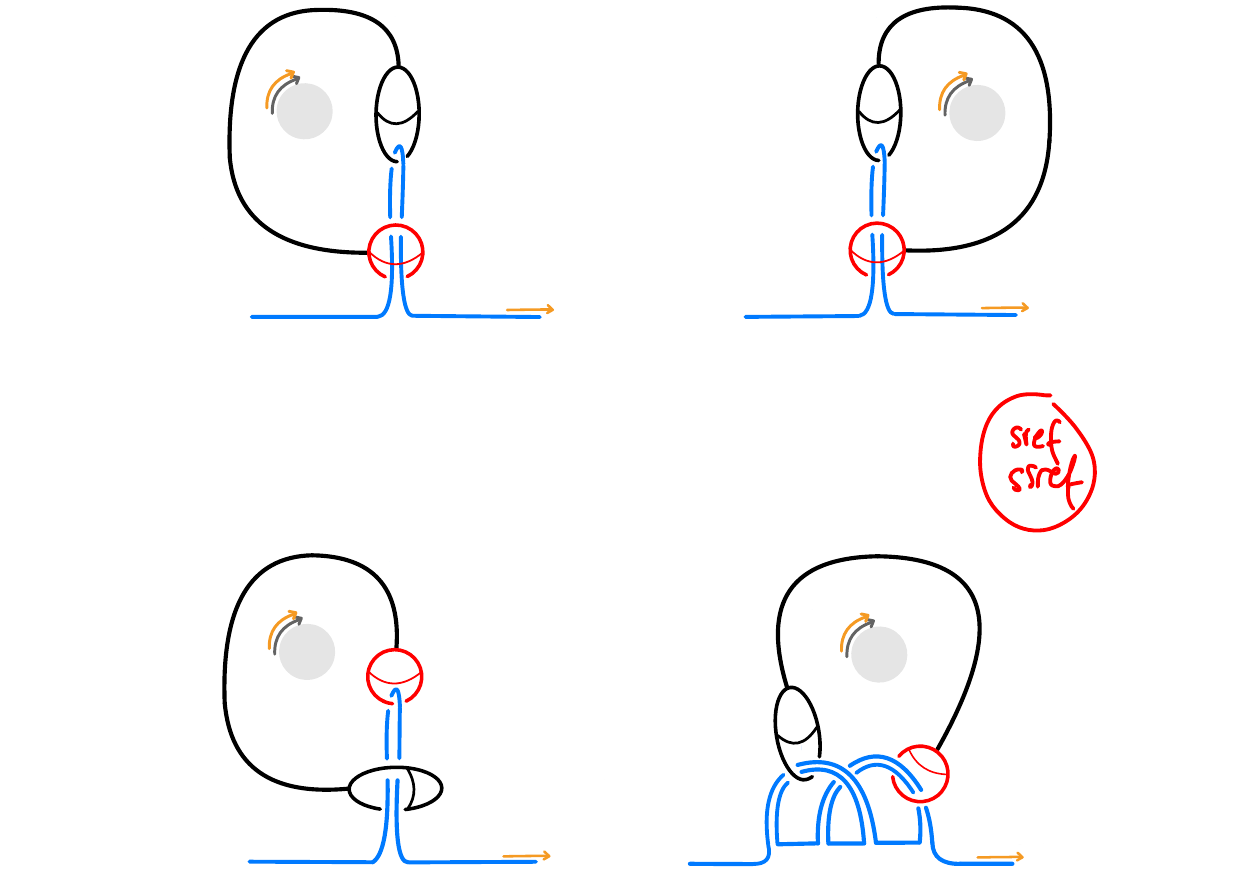}

    \vspace{20pt}
    
    \includegraphics[width=0.95\linewidth]{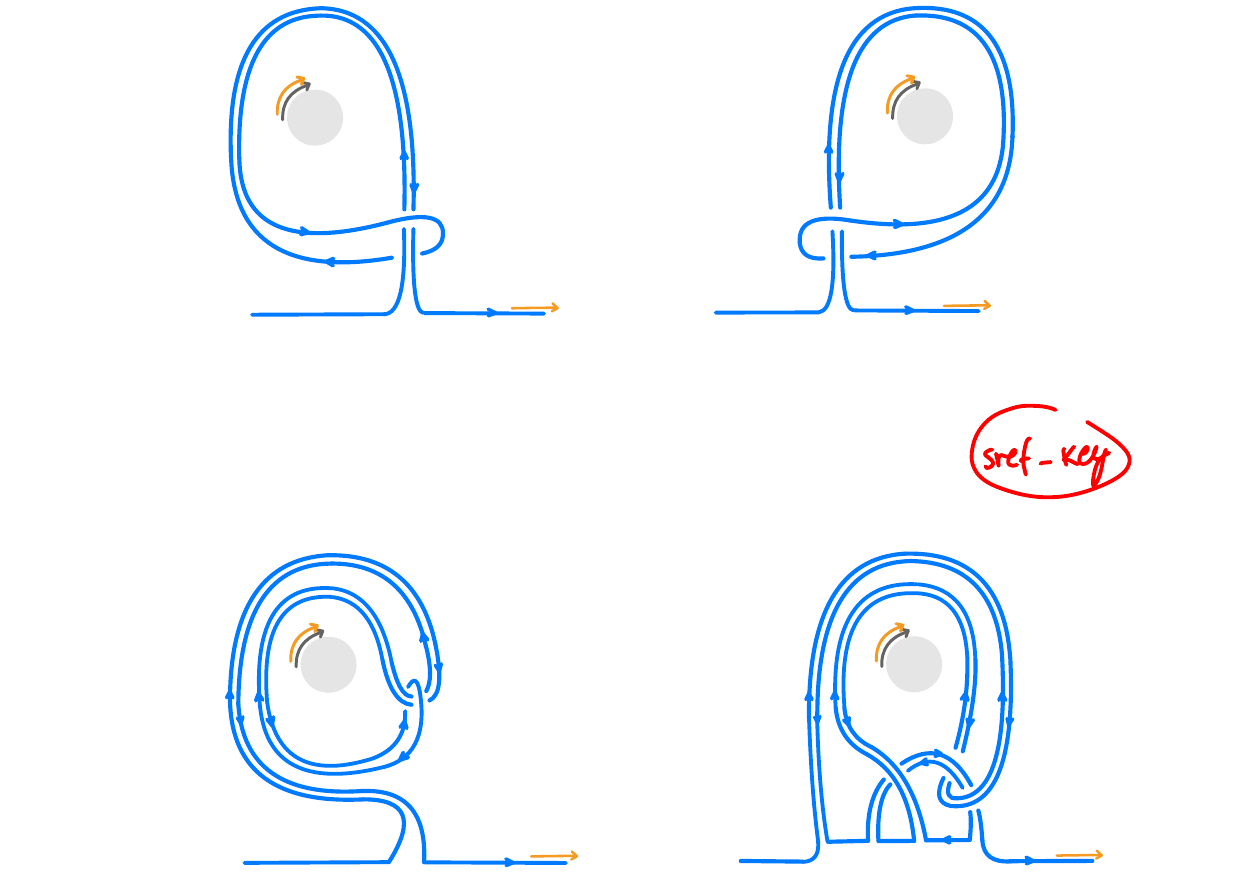}
    \caption{
        The semisimple families $\sref_\circlearrowright(h)$ and $\sref^\circlearrowright_\circlearrowright(h)$ for some $h\in\pi_1(\X)$, and their key members.
    }
    \label{fig:sref2}
\end{figure}

    Completely similarly we compute
    \[
        \Da(\sref^\circlearrowleft(h))= h+h^{-1}.
    \]
    Indeed, the local picture is precisely the same, see Figure~\ref{fig:sref-Dax}(i), so the signs agree with the previous case, and the loops are again $h_1=h$ and $h_2=h^{-1}$.
\begin{figure}[!htbp]
    \centering
    \includegraphics[width=0.95\linewidth]{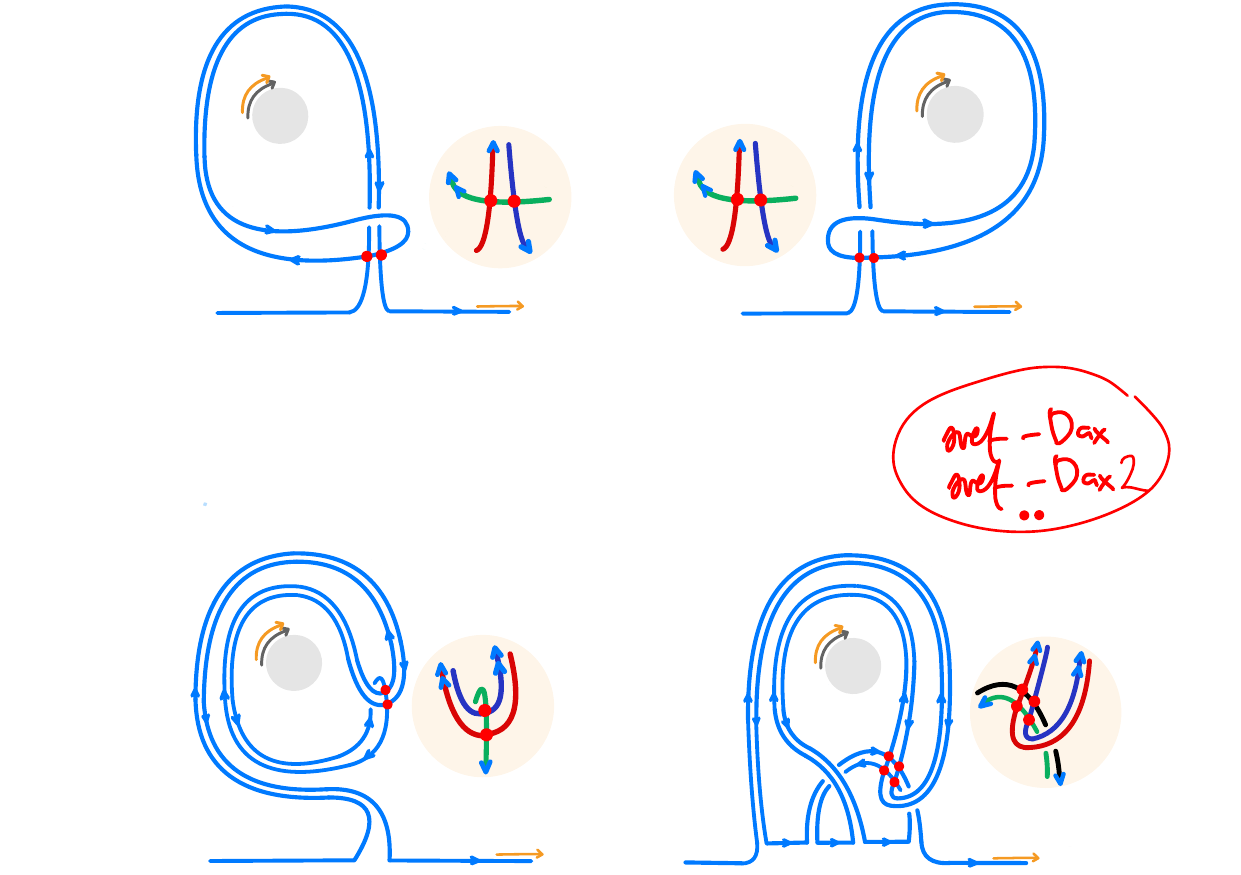}
    \caption{
        The Dax invariants of $\sref^\circlearrowleft(h)$ and $\sref^\circlearrowright(h)$ respectively. In each zoomed-in picture, the derivative of the moving sheet in the $\tau$-direction (the first vector in our basis of $\{0\}\times\R^3$) points towards the reader, so the desired basis will be positive if and only if one goes counterclockwise from the moving to the non-moving sheet; moreover, depicted arcs appear in $\S^1$ in the order (red, green, blue, black). 
        In (ii) at $x_1=$red$\cap$green the first sheet is not moving, and the basis is negative, so $\epsilon(x_1)=(-1)(-1)=1$, whereas $x_2=$green$\cap$blue has the first sheet moving and a positive basis, so $\epsilon(x_2)=(+1)(+1)=1$.
    }
    \label{fig:sref-Dax}
\end{figure}
\begin{figure}[!htbp]
    \centering
    \includegraphics[width=0.95\linewidth]{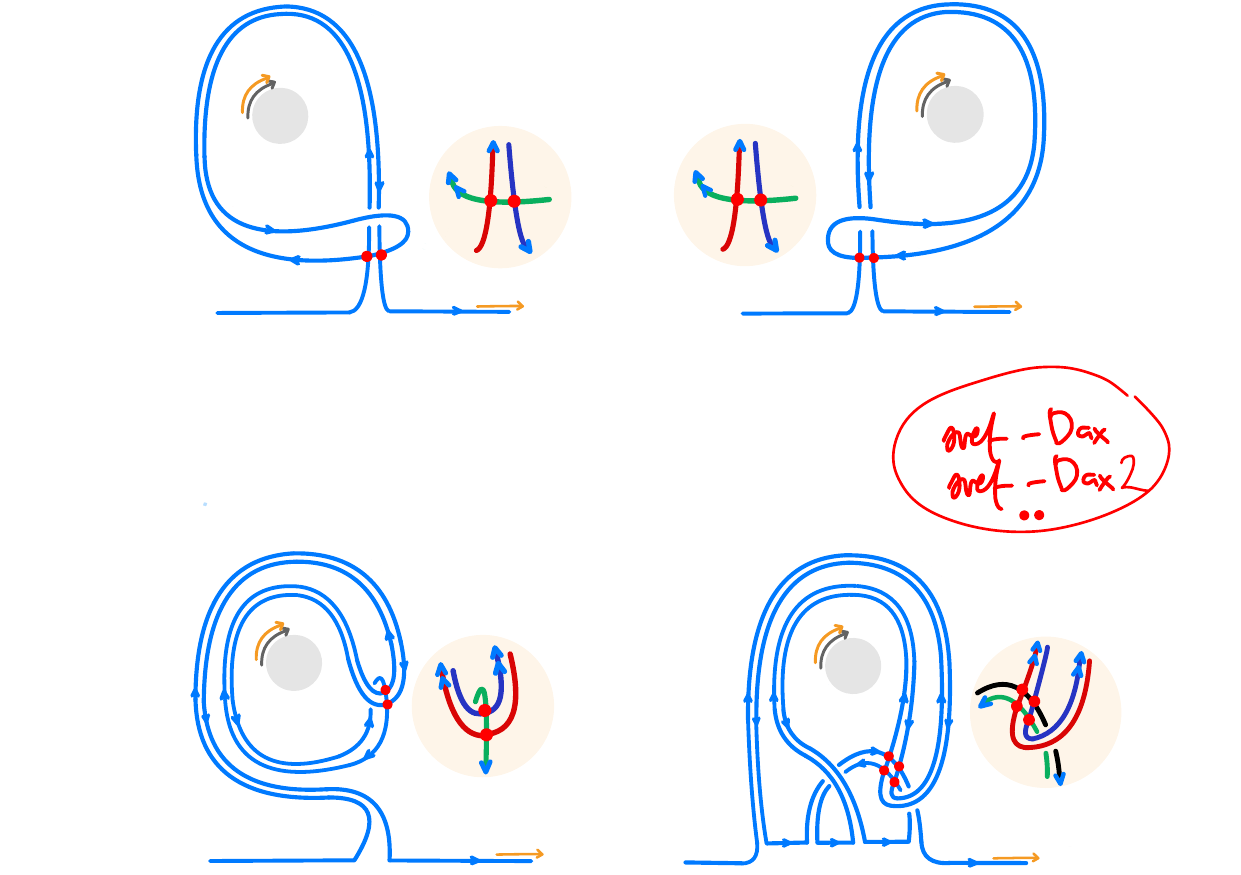}
    \caption{
        The Dax invariants of $\sref_\circlearrowright(h)$ and $\sref^\circlearrowright_\circlearrowright(h)$ respectively. In both cases at $x_1=$red$\cap$green the first sheet moves, and the basis is positive, so $\epsilon(x_1)=(+1)(+1)=1$. 
        For $x_2=$green$\cap$blue in (i) and $x_3=$green$\cap$blue in (ii), the first sheet is not moving, and the basis is negative, so $\epsilon=(-1)(-1)=1$. 
        Finally, in (ii) $x_2$ and $x_4$ are both on the black non-moving strand, but the directions of the other respective vector (red and blue) are mutually opposite, so we have $\epsilon(x_3)=1$ and $\epsilon(x_4)=-1$.
    }
    \label{fig:sref-Dax2}
\end{figure}

    Next, we use a similar nullhomotopy that pulls through the red meridian ball to see that
\[
    \Da(\sref_\circlearrowright(h)) = h+h^{-1}.
\]
    Indeed, in Figure~\ref{fig:sref-Dax2}(i) we see two double points, which when zoomed in and rotated look like in the previous case, except for which sheets move. But this changes the signs by $(-1)(-1)=1$, so they are both again positive. For the loops, at $x_1$ we have $hhh^{-1}=h$, and $hh^{-1}h^{-1}=h^{-1}$ at $x_2$.

    Finally, let us compute
\[
    \Da(\sref^\circlearrowright_\circlearrowright(h)) = h+h^{-1}.
\]
    Pulling through the red meridian ball now gives four double points, as in Figure~\ref{fig:sref-Dax2}(ii). For $x_1$ and $x_3$ (which are on the lower strand of the band on the right) the associated loops and signs look completely the same as in the previous case (Figure~\ref{fig:sref-Dax2}(i)), and give $h$ and $h^{-1}$. For both $x_2$ and $x_4$ the loop is $h$ followed by $hh^{-1}=1$ (which comes from traversing the first finger completely). We have $\epsilon(x_1)=-\epsilon(x_2)=-1$ since the picture is the same except that one vector is reversed. The same happens when comparing $x_3$ and $x_4$, but additionally the other sheet is the one moving, so $\epsilon(x_3)=(-1)(-1)\epsilon(x_4)=1$. Hence, $   \Da(\sref^\circlearrowright_\circlearrowright(h)) = h-h+h^{-1}+h=h+h^{-1}$.
\end{proof}

\begin{rem}[Splitting the leaf]\label{rem:splitting-leaf}
    One could also consider more general graspers $\TG$ in which the leaf sphere $L$ is unknotted in $\X$, not only in the complement of the bar. However, the resulting family of circles again reduces to a sum of simple graspers, by the following  \emph{splitting-leaf move}.
    
    Let $\wh{L}$ be a choice of a bounding 3-ball for $L$ in $X$. Then we can undo the family given by $\TG$ by pulling the arcs through $\wh{L}$. For each intersection point of the bar with $\wh{L}$ we get the Dax invariant $h+h^{-1}$ as in the last proof (so the family is $\prealmap(h+h^{-1})$) and for intersections of $c$ with $\wh{L}$ we get the Dax invariant $h'$ (so the family is $\prealmap(h')$), for some $h,h'\in\pi_1\X$. Thus, the family given by $\TG$ is the sum, in the Dax subgroup of $\pi_1(\Emb(\S^1,\X);x)$, of the families given by graspers $\TG_i$. These have leaves $L_i$ with bounding 3-balls $\wh{L}_i$, whose union gives the ball $\wh{L}$.  
\end{rem}

\begin{rem}\label{rem:isotopies}
    The reader might wonder if one can also see explicit isotopies between all mentioned families in Theorem~\ref{thm:selfref}; we give a sketch to satisfy the curiosity.

    Firstly, one can prove the \emph{splitting-leaf move} by an explicit isotopy: we first write $L$ as an ambient connect-sum of spheres that each links either the bar or $c$ exactly once; then the twirling motion around $L$ can be split into the sum of twirls around each of these spheres. Here we can choose to take the sum either in the $\pi_1$-direction, or in the $c$-direction; compare with Remark~\ref{rem:half-twist}.

    For example, we construct an isotopy from $\sref^\circlearrowright(h)$ to $\prealmap(h+h^{-1})$. In the former, our arc moves on the red sphere with time; equivalently up to an isotopy of the family, the arc can stop after one half of the sphere is traversed and go back to the basepoint, then from there return and traverse the other half of the red sphere. Thus, we are splitting the leaf sphere into the ambient connect-sum of two (oppositely oriented) meridian spheres to $c$. For the meridian on the right we get the same picture as for $\prealmap(h)$ whereas for the meridian on the left we have its mirror image, which is isotopic to $\prealmap(h^{-1})$, as one can check, see~\cite[Fig.3]{K-Dax}.

    An isotopy from $\sref^\circlearrowright(h)$ to $\sref^\circlearrowleft(h)$ simply rotates the tip of the finger while keeping the red sphere still; in other words in Figure~\ref{fig:sref} rotate the black bar around the $z$-axis. Alternatively, look at the key circles: rotating around the $z$-axis the picture at the bottom right of Figure~\ref{fig:sref} produces the picture on the bottom left, except that there is a half-twist in the finger, and the linking of the tip with the body is opposite. By Remark~\ref{rem:half-twist} these two facts cancel each other.
    
    To go from $\sref^\circlearrowleft(h)$ to $\sref_\circlearrowright(h)$, first observe an isotopy of the key circle at the bottom left of Figure~\ref{fig:sref2}, that pulls the tip of the moving finger back around $h$ until it lies vertical. The resulting picture is similar to the bottom left of Figure~\ref{fig:sref}, except that the linking of the tip with the body is opposite, and there is a difference in which sheets move and which stand still. These two signs cancel as in Lemma~\ref{lem:inverse} (the movement is exchanged at the cost of re-orienting the meridian).
    
    Finally, we can split the red sphere in $\sref^\circlearrowright_\circlearrowright(h)$ into the ambient connect-sum of the spheres that are meridian to the two arcs of the band on the right. Then observe that the upper one gives trivial family since the tip of the left finger is free to be isotoped back. On the other hand, the lower sphere gives the family similar to $\sref_\circlearrowright(h)$, except that there is a full twist in the finger. But this does not matter by Remark~\ref{rem:half-twist}, so $\sref^\circlearrowright_\circlearrowright(h)$ is isotopic to $\sref_\circlearrowright(h)$.
\end{rem}

\subsection{Simple-null graspers}
\label{subsec:simple-null-graspers}

Let us now consider a \emph{simple-null} grasper $\TG$ in $\X$ on $c$: this is a general grasper as in Definition~\ref{def:general-grasper} that has a simple root ball $\wh{R}$ (that is, $\wh{R}$ intersects $c$ in a single point), whereas the leaf sphere is a 2-knot $L\colon\S^2\hra \X\sm\nu c$ which is \emph{nullhomotopic} in $\X$. In particular, $L$ bounds a map $\wh{L}\colon\D^3\to \X$. 

Let us assume that the endpoint of the bar is the basepoint $L(e)\in L$, and that this is located in a fixed 3-dimensional ball which contains $c(e)$ and is drawn in our pictures. Thus, we can think of $L$ as a based sphere $[K]\in\pi_2\X$, using any whisker that goes from $L(e)$ to $c(e)$ in this ball.
\begin{defn}\label{def:bar-word-and-element}
    For a simple-null grasper $\TG$ define the \emph{bar word}
    \[
        \W\in\pi_1(\X\sm\nu L)
    \]
    as the homotopy class of the loop that traverses the bar and then the whisker. Moreover, define the \emph{bar group element} 
    $\bw=i_L(\W)\in\pi_1\X$
    as the image of the bar word under the map
    \[
    i_L\colon \pi_1(\X\sm\nu L)\to \pi_1\X
    \]
    induced by the inclusion.
    We denote this simple-null grasper by $\TG_{\W L}$.
\end{defn}

\begin{thm}\label{thm:simple-null-graspers}
    Let  $\TG_{\W L}$ be the simple-null grasper on $c\colon\S^1\hra \X$, with the leaf $L$ and the bar word $\W\in\pi_1(\X\sm \nu L)$. Fix a map $\wh{L}\colon\D^3\to \X$ with $\wh{L}|_{\partial\D^3}=L$. Then in $\X_{\nu L}=(\X\sm\nu L)\cup (\D^3\times\S^1)$ we have the union of $\wh{L}$ and the added $\D^3\times\{pt\}$ that gives a map $\wh{\wh{L}}\colon\S^3\to \X_{\nu L}$.

    The simple-null grasper family $\prealmap^{\TG_{\W L}}\in\pi_1(\Emb(\S^1,\X);c)$ is equal to
    \[
        \prealmap^{\TG_{\W L}}
        =\prealmap\circ(i_L)_*
        \Big(
            \W\dax(\wh{\wh{L}})\W^{-1}
        -\lambdabar(\W\wh{\wh{L}},\W)
        -\ol{\lambdabar(\W\wh{\wh{L}},\W)}
        \Big).
    \]
    Moreover, if $\wh{L}$ is an embedded ball, the first term vanishes.
\end{thm}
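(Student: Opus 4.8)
The plan is to recognise $\prealmap^{\TG_{\W L}}$ as lying in $\im(\prealmap)=\ker(\iota)$ and then to evaluate the Dax invariant $\Da$ on it by running Algorithm~\ref{alg} on a nullhomotopy built from the bounding map $\wh{L}$. For the first point: since $L$ is nullhomotopic via $\wh{L}$, the family $\prealmap^{\TG_{\W L}}$ becomes nullhomotopic \emph{through immersions} --- the twirling around $L$ is cancelled by shrinking $L$ to a point along $\wh{L}$, which introduces self-intersections of circles but is permitted in $\Imm(\S^1,\X)$ --- so the image $\iota(\prealmap^{\TG_{\W L}})$ vanishes, see \eqref{eq:Dax-extension}, and hence $\prealmap^{\TG_{\W L}}\in\im(\prealmap)$. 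As $\Da$ is inverse to $\prealmap$, it then suffices to prove that $\Da(\prealmap^{\TG_{\W L}})$ equals $(i_L)_*$ of the bracketed expression in $\Z[\pi_1\X]/\ker(\prealmap)$.

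To compute $\Da$ I would first fix a generic representative: perturb $\wh{L}$ to be self-transverse, transverse to $c$ and to the bar, to meet the whisker only at $L(e)$, and to satisfy $\wh{L}^{-1}(\nu L)=(\text{a collar of }\partial\D^3)$, so that gluing $\wh{L}$ to the core $\D^3\times\{pt\}$ genuinely produces $\wh{\wh{L}}\colon\S^3\to\X_{\nu L}$. Then I would take the nullhomotopy $F_{s,\tau}$ described above, which for each $\tau$ is the grasper family around a shrunk leaf, and arrange (by general position) that $F_{s,\tau}$ fails to be embedded at only finitely many $(s,\tau)$, with every resulting double point of one of two types: \emph{type (a)}, at a transverse self-intersection of $\wh{L}$ --- equivalently a double point in a $2$-parameter arc-foliation of $\wh{\wh{L}}$ --- and \emph{type (b)}, at a transverse intersection point of $\wh{\wh{L}}$ with the loop $\W$ (represented as: a short arc of $c$ up to the root, then the bar, then the whisker). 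Intersections of $\wh{L}$ with $c$ away from the root are first slid along $c$ so as to be recorded in type (b), and the surgery core $\D^3\times\{pt\}$ meets none of $\W$, $c$, or the bar.

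Now I would run Algorithm~\ref{alg} with Convention~\ref{convention:Dax} on each type. For type (a): near a self-intersection of $\wh{L}$ both sheets of $F_{s,\tau}$ lie on a shrunk copy of $\wh{L}$, which the moving circle reaches only after traversing (the arc of $c$ and) the bar; since the whisker is trivial, the Algorithm's double point loop is $\W h \W^{-1}$ with $h$ the loop occurring in $\dax(\wh{\wh{L}})$, and the Algorithm's sign matches the defining sign of $\dax$ just as in the sign verification in the proof of Algorithm~\ref{alg}; the total type (a) contribution is thus $\W\,\dax(\wh{\wh{L}})\,\W^{-1}$, which vanishes when $\wh{L}$ --- hence $\wh{\wh{L}}$ --- is an embedded ball, there being then no self-intersections at all. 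For type (b): the local picture at an intersection of $\wh{\wh{L}}$ with $\W$ is exactly that of the semisimple-grasper computations in the proof of Theorem~\ref{thm:selfref} (compare also Remark~\ref{rem:splitting-leaf}) --- shrinking the copy of $\wh{L}$ past the relevant strand of $\W$ produces a pair of simultaneous double points with loops $h$ and $h^{-1}$ and equal signs, $h$ being the double point loop of that intersection --- so the total type (b) contribution is $-\lambdabar(\W\wh{\wh{L}},\W)-\ol{\lambdabar(\W\wh{\wh{L}},\W)}$; one may use the reduced pairing since a coefficient at $1$ contributes $-2\in\ker(\prealmap)$. Applying $(i_L)_*$ to push all these loops (which lie in $\X\sm\nu L$) into $\pi_1\X$ and adding the two contributions yields the asserted value of $\Da(\prealmap^{\TG_{\W L}})$, hence of $\prealmap^{\TG_{\W L}}$.

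The main obstacle is the sign and basepoint bookkeeping: reconciling the double point loops and signs output by Algorithm~\ref{alg} with the defining conventions of $\dax$ and $\lambdabar$ from \cite{K-Dax}, confirming that the conjugation by $\W$ is precisely right (this uses that the whisker lies in the fixed ball, hence is contractible), and verifying that the shrinking nullhomotopy can be arranged with isolated immersed times and no double points outside types (a) and (b) --- in particular that the framing used to twirl around parallel copies of $L$ is compatible with the surgery identification underlying $\wh{\wh{L}}$. Independence of the choices (of $\wh{L}$, the whisker, the perturbations) is automatic, since $\Da$ is a well-defined invariant on $\im(\prealmap)$ by \cite{KT-highd}.
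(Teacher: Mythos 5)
Your plan matches the paper's: both recognise $\prealmap^{\TG_{\W L}}\in\im(\prealmap)=\ker(\iota)$ and then evaluate the Dax invariant $\Da$ on the nullhomotopy obtained by shrinking the leaf to a point along a choice of bounding ball $\wh{L}$, working in the surgered manifold $\X_{\nu L}$ so that the relevant intersection data becomes a well-defined $\dax$-computation. The divergence is in how the Dax computation is finished. The paper isolates the intermediate identity $\Da(\prealmap^{\TG_{\W L}})=(i_L)_*\bigl(\dax_u(\W\wh{\wh{L}})\bigr)$ and then simply cites the whisker-change formula $\dax(ha)=h\dax(a)h^{-1}-\lambdabar(ha,h)-\ol{\lambdabar(ha,h)}$ from \cite[Thm(II),(22)]{K-Dax}, together with \cite[Cor.B(i)]{K-Dax} for the vanishing when $\wh{L}$ is embedded. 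You instead re-derive that formula from scratch by splitting double points into type~(a) (on $\wh{\wh{L}}$) and type~(b) (between $\wh{\wh{L}}$ and $\W$); this is essentially unpacking the proof of the cited theorem, which is a valid but longer route. Two bookkeeping points in your sketch need tightening, however. First, intersections of $\wh{L}$ with the base arc $u=c\sm\D^1$ belong to $\dax_u(\wh{\wh{L}})$ itself and should sit inside your type~(a), not be ``slid into type~(b)'' --- the $\lambdabar$ terms only account for intersections created by the change of whisker to $\W$, and conflating the two risks double-counting or sign errors. Second, and relatedly, your justification of the ``Moreover'' part (``there being then no self-intersections at all'') is insufficient: even when $\wh{\wh{L}}$ is embedded it may still intersect $u$, and the vanishing of $\dax$ on classes represented by embedded spheres is the nontrivial content of \cite[Cor.B(i)]{K-Dax}, not an a priori consequence of embeddedness. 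With those two points repaired (or replaced by the citations the paper uses) your argument aligns with the paper's.
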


\begin{proof}
    First note that the family $\prealmap^{\TG_{\W L}}$ is obtained by foliating an embedded sphere (replace $\bw$ by a tube that guides $c$ into $L$). Moreover, since the grasper is away from the basepoint $c(e)$, we can keep a neighbourhood $\nu_e\subset c$ of $c(e)$ fixed throughout the family.
    This means that the projection of $\iota\prealmap^{\TG_{\W L}}\in\pi_1(\Imm(\S^1,\X);c)$ further to $\Fix_{\bc}(\pi_1\X)\leq\pi_1\X$ is trivial, see \eqref{eq:Dax-extension}. Thus, the class in $\pi_1(\Imm(\S^1,\X);c)$ comes from $\pi_2\X/b=\bc\cdot b$, and it is easy to see that the corresponding sphere is $[\bw K]\in\pi_2\X$. But by our assumption this is trivial.

    Therefore, \eqref{eq:Dax-extension} tells us that $\prealmap^{\TG_{\W L}}$ is in the image of $\prealmap\colon\Z[\pi_1\X]/\ker(\prealmap)\to\pi_1(\Emb(\S^1,\X);c)$. To find the class in $\Z[\pi_1\X]$ from which it comes, we compute the Dax invariant from Section~\ref{subsec:Dax-invt}:
    \[
    \Da(\prealmap^{\TG_{\W L}})\in \faktor{\Z[\pi_1\X]}{\ker(\prealmap)}.
    \]
    For this we need a nullhomotopy $F$ of our family, and this can be produced by foliating the based 3-ball $\bw \wh{L}$ by arcs. Note that the choice of $\wh{L}$ does not matter since any other choice $\wh{L}'$ glues with $\wh{L}$ to a 3-sphere in $\X\sm\D^4$, so $\dax_u(\bw \wh{L}\cup\bw \wh{L}')\in \ker(\prealmap)$ for the Dax homomorphism from Section~\ref{subsec:dax}.
    
    Moreover, $c$ survives to the surgered manifold $\X_{\nu L}$, so we still have $u\colon\D^1\hra \X_{\nu L}\sm\D^4$, and we can compute $\dax_u(\W\wh{\wh{L}})\in\Z[\pi_1\X_{\nu L}\sm1]$. Since $\D^3\times\{pt\}$ is embedded, double points that occur in $\dax_u(\bw\wh{\wh{L}})$ are precisely those that occur when computing $\Da(\prealmap^{\TG_{\W L}})$ as in the previous paragraph. In addition, since  $\pi_1(\X\sm\nu L)\cong\pi_1(\X_{\nu L})$ (as the difference is $\S^1\times\D^3$) the map $i_L\colon \X\sm\nu L\to \X$ induces a ring homomorphism $(i_L)_*\colon\Z[\pi_1\X_{\nu L}\sm1]\to\Z[\pi_1\X\sm1]$. Thus, the preceding observation implies 
    \[
        \Da(\prealmap^{\TG_{\W L}})=(i_L)_*(\dax_u(\W\wh{\wh{L}})).
    \]
    Moreover, since $c=\barbemb c_1$ is a small meridian circle of $\barbemb S_1$, the arc $u$ is homotopic into boundary in $\X\sm\D^4$. In this case we denote $\dax\coloneqq\dax_u$ as in our previous work~\cite[Thm(II),(22)]{K-Dax}, where we determine the expression for $\dax(\W\wh{\wh{L}})$: for $a\in\pi_3(\X\sm\D^4)$ and $h\in\pi_1\X$ we show that 
    \[
        \dax(ha)=h\dax(a)h^{-1}
        -\lambdabar(ha,h)-\ol{\lambdabar(ha,h)}.
    \]
    Finally, if $\wh{L}\subset \X$ is embedded, so is $\wh{\wh{L}}\subset \X_{\nu L}$, implying that $\dax(\wh{\wh{L}})$ vanishes~\cite[Cor.B(i)]{K-Dax}.
\end{proof}

\begin{rem}
    The surgery on $\nu L$ in this theorem might seem unnecessary. After all, our goal is to compute the invariant $\Da$ in $\X$. However, once we pass to $\X_{\nu L}$ we can easily express this invariant in terms of $\lambda$, whereas in $\X$ we would have to deal with a relative version of $\lambda$ (for 3-manifolds with boundary). See Corollary~\ref{cor:implant-formula} for how this theorem is applied.
\end{rem}


\subsection{Borromean link families}
\label{subsec:links}

Let us briefly discuss some families of links in $\S^4$ that will be useful in Section~\ref{sec:W}. Firstly, we consider the \emph{Hopf link}
    \begin{equation}\label{eq-def:Hopf}
        \Hopf\colon\S^2\sqcup\S^1\hra \S^4
    \end{equation}
where $\Hopf(\S^1)$ can be viewed as a small meridian circle of $\Hopf(\S^2)$, and up to isotopy vice versa, $\Hopf(\S^2)$ can be viewed as a small meridian sphere of $\Hopf(\S^1)$. 

    \begin{figure}[!htbp]
        \centering
        \includegraphics[width=\linewidth]{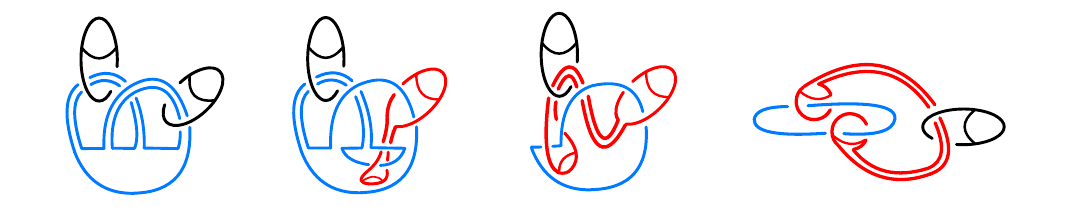}
        \caption{
            (i)~The Borromean link $\Bor$. 
            (ii-iv)~An isotopy of the Borromean link $\Bor$.
        }
        \label{fig:Bor}
    \end{figure}
Secondly, we consider the \emph{Borromean link}
    \begin{equation}\label{eq-def:Bor-link}
        \Bor\colon\S^1\sqcup\S^2\sqcup\S^2\hra \S^4
    \end{equation}
defined as follows, see Figure~\ref{fig:Bor}(i). We let $\Bor(\S^1)$ be the boundary of a standard genus one surface in $\S^4$ (obtained by taking a disk $\D^2$ and attaching two bands, i.e.\ 2-dimensional 1-handles), and we let the sphere components $\Bor(\S^2\sqcup\S^2)$ be the meridians of those bands. Then the isotopy depicted in Figure~\ref{fig:Bor} shows the following.
\begin{lem}
    The link $\Bor$ is isotopic to the one that consists of an unlink $\Unlink\colon\S^1\sqcup\S^2\hra \S^4$ together with a sphere $\S^2\hra \S^4\sm \Unlink$ obtained by ambiently connect-summing two meridians of $\Unlink(\S^1)$ along an arc that is a meridian to $\Unlink(\S^2)$.
\end{lem}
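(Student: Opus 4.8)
The plan is to exhibit the isotopy directly, by the sequence of ambient moves in $\S^4$ drawn in Figure~\ref{fig:Bor}(ii)--(iv), and to explain why each move acts as claimed on the three components. I do not expect a shortcut through invariants: for links of mixed dimension (one $\S^1$ and two copies of $\S^2$) in $\S^4$ there is no classification to invoke, and all pairwise linking numbers vanish in any case, so they carry no information.

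First I would set up the model from the definition of $\Bor$: the circle component is $\Bor(\S^1)=\partial\Sigma$, where $\Sigma=\D^2\cup b_1\cup b_2$ is a standardly clasped once-punctured torus in a ball $\D^4\subset\S^4$, and the two sphere components $P_1,P_2$ are the meridian $2$-spheres of the $1$-handles $b_1,b_2$ (boundaries of normal $3$-balls to their cores). The key geometric input is that in dimension four the two bands of $\Sigma$ can be unclasped by an ambient isotopy of $\S^4$: using the extra room, the core arc of $b_2$ can be isotoped back onto $\partial\D^2$, so that $b_2$ becomes a trivial band running parallel to $\partial\D^2$ and $b_1$ becomes standard. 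After this isotopy the meridian sphere $P_1$ of $b_1$, together with $\Bor(\S^1)$, is the split unlink $\Unlink\colon\S^1\sqcup\S^2\hra\S^4$; and the meridian sphere $P_2$, dragged along by the relocation of $b_2$, now reads as the ambient connect-sum of two meridians of $\Unlink(\S^1)$, tubed together along an arc that tracks the new position of $b_2$ and is a meridian of $\Unlink(\S^2)=P_1$. These three stages are exactly panels (ii)--(iv) of Figure~\ref{fig:Bor}.

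The step that needs care -- and the main obstacle -- is the bookkeeping on the two spheres during the unclasping isotopy: one must check that the tube producing the surviving sphere $P_2$ is routed precisely as a meridian of $\Unlink(\S^2)$, and not as some longer or more twisted tube, and that the two meridians of $\Unlink(\S^1)$ enter the connect-sum with the orientations that force the two relevant pairwise linking numbers to vanish, as they must for $\Bor$. With all framings and co-orientations put in, this is a finite check against the pictures, so I would carry it out by drawing the isotopy carefully and verifying the routing of the tube and the signs; the remainder is routine.
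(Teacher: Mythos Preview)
Your proposal is correct and follows exactly the paper's approach: the paper's entire proof is the isotopy drawn in Figure~\ref{fig:Bor}(ii)--(iv), and you are describing precisely that sequence of moves (unclasp the two bands of the genus-one Seifert surface using the fourth dimension, then track the two meridian spheres along the isotopy). Your added commentary on the bookkeeping of the tube and orientations is a welcome elaboration, but there is no alternative method here---the paper simply points to the picture.
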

Let us now foliate one of the spheres in $\Bor$ (for example, the one on the right of Figure~\ref{fig:Bor}(i)) by a 1-parameter family of embedded circles, to define the \emph{Borromean linked family}
    \begin{equation}\label{eq-def:Bor-fam}
        \Bor_s\colon\S^1\sqcup\S^1\sqcup\S^2\hra \S^4,
    \end{equation}
where $s\in[0,1]$ and $\Bor_0=\Bor_1$.
To define the foliation, we take for the basepoint circle $\Bor_0(\S^1)$ the union a small arc $\alpha\subseteq\Bor(\S^2)$ around the north pole and an arc in $\S^4$ that intersects $\Bor(\S^2)$ in $\partial\alpha$ but is otherwise disjoint from  $\Bor(\S^2)$; we then twirl $\alpha$ around the sphere $\Bor(\S^2)$ while keeping the endpoints fixed. The last lemma implies:

\begin{cor}
    The Borromean linked family $\Bor_s\colon\S^1\sqcup\S^1\sqcup\S^2\hra \S^4$ is isotopic to the family obtained by foliating the red sphere in Figure~\ref{fig:Bor}(iv).
\end{cor}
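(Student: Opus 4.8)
The plan is to promote the isotopy of links from the preceding lemma to an isotopy of the corresponding loops of embeddings, using the isotopy extension theorem together with the naturality of the twirling construction. (We write the argument as a sketch, in the spirit of a proposal.)

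First fix an ambient isotopy. The lemma asserts that $\Bor$ is isotopic to the link drawn in Figure~\ref{fig:Bor}(iv), and the sequence of pictures (ii)--(iv) exhibits an explicit path of embeddings $\S^1\sqcup\S^2\sqcup\S^2\hra\S^4$ from $\Bor$ to that link. By the isotopy extension theorem this path extends to a compactly supported ambient isotopy $\Phi_t\colon\S^4\to\S^4$, $t\in[0,1]$, with $\Phi_0=\Id$, such that $\Phi_1$ carries $\Bor$ to the link of Figure~\ref{fig:Bor}(iv). In particular $\Phi_1$ sends the sphere component being foliated (the one on the right of Figure~\ref{fig:Bor}(i)) to the red sphere of Figure~\ref{fig:Bor}(iv), and it sends the remaining two components, $\Bor(\S^1)$ and the other sphere, to $\Unlink(\S^1)$ and $\Unlink(\S^2)$ respectively.

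Next transport the foliated family along $\Phi_1$. By construction $\Bor_s$ is the loop obtained by twirling the arc $\alpha\subseteq\Bor(\S^2)$ around that sphere with respect to a fixed foliation of $\S^2$ by arcs with fixed endpoints, while the other two components stay put. Since the twirling motion is defined purely in terms of the parametrisation of the sphere and the ambient $\S^4$, postcomposing the whole picture with $\Phi_1$ turns $\Bor_s$ into $\Phi_1\circ\Bor_s$, which is precisely the twirling of $\Phi_1(\alpha)$ around the red sphere of Figure~\ref{fig:Bor}(iv) with respect to the pushed-forward foliation, the other two components being the now static $\Unlink(\S^1)$ and $\Unlink(\S^2)$. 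Because $\Phi_1$ is ambiently isotopic to the identity, the loops $\Bor_s$ and $\Phi_1\circ\Bor_s$ define the same class in $\pi_1(\Emb(\S^1\sqcup\S^1\sqcup\S^2,\S^4);\Bor_0)$.

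Finally identify $\Phi_1\circ\Bor_s$ with the family obtained by foliating the red sphere in Figure~\ref{fig:Bor}(iv). Here one only needs to check that the pushed-forward data -- the arc $\Phi_1(\alpha)$, its complementary arc in $\S^4$, and the pushed-forward foliation -- agree, up to an isotopy of families, with the data implicit in that figure; this one reads off the isotopy (ii)--(iv), noting in addition that the resulting twirling family does not depend up to isotopy on the choice of foliation of $\S^2$ by arcs with fixed endpoints (the space of such foliations being connected). The main, though routine, obstacle is exactly this last bit of bookkeeping -- tracking where the basepoint arc $\alpha$ travels under the pictured isotopy; everything else is a formal application of isotopy extension.
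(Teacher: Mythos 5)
Your proposal is correct and takes essentially the same approach as the paper, which simply asserts that the corollary follows from the preceding lemma and gives no written proof: use the isotopy of links to produce (via isotopy extension) an ambient isotopy carrying the foliated sphere of $\Bor$ to the red sphere, and note that twirling is natural under ambient isotopy. Your remark that the choice of foliation of $\S^2$ by arcs with fixed endpoints is immaterial (up to isotopy of the resulting family) is a sensible addition to close the bookkeeping gap; the only minor imprecision is the claim that $\Bor_s$ and $\Phi_1\circ\Bor_s$ define the same class in $\pi_1(\Emb(\dots);\Bor_0)$ --- strictly speaking the latter loop is based at $\Phi_1\circ\Bor_0$, so one should say the two loops are connected by the path of loops $t\mapsto\Phi_t\circ\Bor_\bullet$, which is exactly what ``isotopic family'' means here.
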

Finally, we fix framings on the links $\Hopf$ and $\Bor$ and $\Bor_s$. In each case, both sphere components have trivial normal bundles, and since $\pi_2SO_2\cong0$ there is a unique isotopy class of framings. Each circle component has a trivial normal bundle, and two possible framings, since $\pi_1SO_3\cong\Z/2$; we pick the 0-framing (the one that extends across a bounding disk $\D^2\subset\S^4$). Note that this makes sense across the whole family $\Bor_s$: the family is an isotopy of the circle that extends to an isotopy of a bounding disk, so it preserves the 0-framing.

\section{Framed families and parameterised surgery}\label{sec:ps}
\label{subsec:framed}  

\subsection{Framed families}

Let $\X$ be any oriented 4-manifold with a fixed $\nu c\colon\nu\S^1\hra \X$. We lift all our simple grasper families along the map
\[
    \pi_1(\Emb(\nu\S^1,\X);\nu c) \ra \pi_1(\Emb(\S^1,\X);c) 
\]
that restricts along $\S^1=\S^1\times\{0\}\hra\S^1\times\D^3= \nu\S^1$, i.e.\ forgets the framing. 

Now, we rely on the study of framed embeddings from \cite{K-Dax2}.
In that paper we show that a class $[f]\in\pi_1(\Emb(\S^1,\X); c)$, such that $f_t$ are based embeddings (i.e.\ map the basepoint $e\in\S^1$ to the basepoint $c(e)\in \X$), \emph{does not lift} to a loop of framed embeddings if and only if $p([f])\in\pi_2\X$ has nontrivial second Stiefel--Whitney number. Since all classes in the image of $\prealmap$ fix the basepoint and have vanishing $p$, we conclude that they do lift. 

Moreover, for any manifold $\X$ and a given $\nu c$ there are at most two such lifts. 
In one of them a neighbourhood of the basepoint of $c$ is fixed throughout the family, whereas in the other all normal 3-balls rotate, generating $\pi_1SO_3=\Z/2$.
Our preferred lift is the stationary framing.

\begin{cor}
    The map $\prealmap$ has a lift
    \[
        \Z/2\times \faktor{\Z[\pi_1\X]}{\ker(\prealmap)}\ra \pi_1(\Emb(\nu\S^1,\X);\nu c).
    \]
    We keep using the notation $\prealmap$ for the restriction of this map to $\{0\}\times \Z[\pi_1\X]/\ker(\prealmap)$.
\end{cor}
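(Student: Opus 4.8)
Looking at this corollary, it follows fairly directly from the preceding discussion, so my proof proposal should extract the logical skeleton.

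The plan is to assemble the lift from two independent pieces: the $\Z/2$ coming from rotating the normal framing of $\nu c$, and the $\Z[\pi_1\X]/\ker(\prealmap)$ coming from the known map $\prealmap$ together with a choice of stationary framing. First I would invoke the result of \cite{K-Dax2} recalled just above: since every class in $\im(\prealmap)$ fixes the basepoint $e\in\S^1$ and has vanishing image $p$ in $\pi_2\X$, its second Stiefel--Whitney number is zero, so it lifts to a loop of framed embeddings; moreover the set of such lifts is a torsor over $\pi_1\SO_3=\Z/2$. Among the two lifts I fix the \emph{stationary} one, in which a neighbourhood of $c(e)$ (together with its normal $3$-ball) is held fixed for all $t$; this is canonical once $\nu c$ is fixed, so it defines a set-theoretic section $\prealmap\colon\Z[\pi_1\X]/\ker(\prealmap)\to\pi_1(\Emb(\nu\S^1,\X);\nu c)$ of the forgetful map.

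Next I would check that this section is a group homomorphism, not merely a function. The unframed $\prealmap$ is a homomorphism by Remark~\ref{rem:half-twist}; the stationary lift of a product $\prealmap(h_1)\prealmap(h_2)$ is the concatenation of the stationary lifts of the two factors (the framing being held fixed near $c(e)$ in each, concatenation introduces no extra twisting), so it agrees with the stationary lift of $\prealmap(h_1h_2)$. Hence the stationary $\prealmap$ on $\Z[\pi_1\X]/\ker(\prealmap)$ is a homomorphism to $\pi_1(\Emb(\nu\S^1,\X);\nu c)$.

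Finally I would combine this with the class $\rot_{\nu c}\in\pi_1(\Emb(\nu\S^1,\X);\nu c)$ obtained by rotating all normal $3$-balls once, which generates a $\Z/2$ (the nontrivial element of $\pi_1\SO_3$) and which forgets to the trivial class in $\pi_1(\Emb(\S^1,\X);c)$. Because $\rot_{\nu c}$ is supported in an arbitrarily small neighbourhood of $\nu c$ while the stationary lift of any $\prealmap(h)$ is the identity near $c(e)$, the two constructions commute, giving a homomorphism $\Z/2\times \Z[\pi_1\X]/\ker(\prealmap)\to\pi_1(\Emb(\nu\S^1,\X);\nu c)$ that lifts $\prealmap$, as claimed; the statement that this exhausts all lifts is exactly the ``at most two lifts'' assertion from \cite{K-Dax2}. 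The one genuinely substantive input is the $w_2$-criterion of \cite{K-Dax2}; the rest is bookkeeping about where the framings are supported, and the only mild subtlety to be careful about is that concatenating stationary lifts really does produce the stationary lift of the concatenation, which is why I separate that check out explicitly.
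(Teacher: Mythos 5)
Your proof has the right skeleton and matches the route the paper takes, which is to derive the corollary directly from the preceding discussion: the $w_2$-criterion from \cite{K-Dax2} gives existence of a framed lift for any class in $\im(\prealmap)$, the ``at most two lifts'' statement singles out the stationary one, and $\rot_{\nu c}$ provides the $\Z/2$ factor. Your explicit check that the stationary lift is a homomorphism (concatenating two stationary lifts is still stationary near $c(e)$, hence must agree with the stationary lift of the sum, by uniqueness among stationary lifts) is a correct piece of bookkeeping that the paper leaves implicit.

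The one genuine flaw is in the justification that $\rot_{\nu c}$ commutes with the stationary lifts. A disjoint-support argument is not available here: the support of $\rot_{\nu c}$ is essentially all of $\nu c(\nu\S^1)\sm c(\S^1)$ --- it rotates the normal $3$-disks over the \emph{entire} core circle, not only near $c(e)$ --- while the stationary lift of $\prealmap(h)$ moves the part of $\nu c(\nu\S^1)$ near the grasper's root, so the two supports overlap. Nor can $\rot_{\nu c}$ be re-represented so that the framing rotation is localised over a small arc of $\S^1$: loops of maps $\S^1\to SO_3$ that are constantly the identity on a fixed arc form a space whose $\pi_1$ is $\pi_2 SO_3=0$. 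The commutativity is nonetheless true, by a different argument: $\mathrm{Map}(\S^1,SO_3)$ acts on $\Emb(\nu\S^1,\X)$ by fibrewise rotation of the normal disks of $\nu\S^1$, the class $\rot_{\nu c}$ is the orbit of the loop of constant rotations at the basepoint $\nu c$, and the image of the orbit map $\pi_1\mathrm{Map}(\S^1,SO_3)\to\pi_1(\Emb(\nu\S^1,\X);\nu c)$ is always central (Eckmann--Hilton applied to the action map). So your conclusion is correct, but that step needs this centrality observation rather than a support argument.
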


Similarly, each of the families $\sref$ and $\prealmap^{\TG}$ admits a canonical lift to a family of framed embeddings.

\begin{rem}\label{rem:emb-torus-gen}
    In Remark~\ref{rem:emb-torus} we saw that simple grasper families are obtained by foliating embedded tori. All simple-null grasper families $f$ also have embedded representatives: define $T_f$ as the torus obtained from the thin torus containing $c$ by a connect-sum with $L$ using the bar as the guiding arc.
    Moreover, note that $T_f$ is framed, and a framing of $f$ can be obtained by foliating $\nu T_f$, and adding the normal framing of $f_a$ in $T_f$.
\end{rem}

\subsection{The maps}
Let us now recall~\eqref{eq-intro:ps-S} from the introduction, the parameterised surgery map
\begin{equation}\label{eq-def:ps-S}
\begin{tikzcd}[column sep=1.8cm]
    \ps_{\nu S}\colon \pi_1(\Emb(\nu\S^1,\M_{\nu S});\nu c) \rar{\delta_{\nu c}} & \pi_0\Diffp(\M\sm\nu S) \rar{-\cup\Id_{\nu S}} & \pi_0\Diffp(\M),
\end{tikzcd}
\end{equation}
This will be used in Section~\ref{sec:BG}. The case when $S$ is unknotted will be used in Section~\ref{sec:W}, where $\ps_{\nu S}$ is denoted as in~\eqref{eq-intro:ps}, simply by
\begin{equation}\label{eq-def:ps}
\begin{tikzcd}[column sep=1.5cm]
    \ps\colon \pi_1(\Emb(\nu\S^1,\M\#\,\S^3\times\S^1);\nu c) \rar{\delta_{\nu c}} & \pi_0\Diffp(\M\sm\nu S) \rar{-\cup\Id_{\nu S}} & \pi_0\Diffp(\M).
\end{tikzcd}
\end{equation} 
Recall that $\delta_{\nu c}$ lifts a loop of framed $f_t\colon\S^1\hra \M_{\nu S}$ based at $f_0=f_1=\nu c$ to a path $F_t$ of diffeomorphisms of the surgered manifold $\M_{\nu S}\coloneqq (\M\sm\nu S)\cup \nu c$ with $F_0=\Id$ and $F\circ f_t=F_t$ (i.e.\ $F$ is an ambient isotopy extending $f$), then restrict the endpoint diffeomorphism $F_1$ to the complement of $\nu c$, so $\delta_{\nu c}(f)=F_1|_{\M_{\nu S}\sm\nu c}$. We then extend this by the identity over $\nu S$, in order to obtain a diffeomorphism of $\M=(\M_{\nu S}\sm\nu c)\cup\nu S$.

\begin{rem}\label{rem:amb}
    Consider the map $\ev_{\nu c}\colon\Diffp(\M_{\nu S})\to \Emb(\nu\S^1,\M_{\nu S})$ that evaluates a diffeomorphism $\varphi$ of $\M_{\nu S}$ on $\nu c\subseteq \M_{\nu S}$ (i.e.\ sends $\varphi$ to $\varphi\circ\nu c$). This is a fibre bundle by a classical result of Cerf and Palais. The fibre consists of those $\varphi$ which fix $\nu c$, so are precisely diffeomorphisms of $\M_{\nu S}\sm \nu c=\M\sm\nu S$. Then the map $\delta_{\nu c}$ in~\eqref{eq-def:ps-S} is exactly the connecting map of this fibre bundle.
\end{rem}

The following is inspired by an idea of Peter Teichner; see also \cite[Sec.\ 4]{Gay} and \cite[Sec.\ 9]{Watanabe-theta}.

\begin{prop}\label{prop:phc}
    Diffeomorphisms of a 4-manifold $\M$ that are in the image of $\ps$ are all pseudo-isotopic to the identity. More precisely, this map factors as
    \[\begin{tikzcd}
        \ps\colon\;
        \pi_1(\Emb(\nu\S^1,\M\#\,\S^3\times\S^1);\nu c)\rar{\phc} & \pi_0\Diff_{\partial_0}(\M\times[0,1])\rar{\partial_1} & \pi_0\Diffp(\M)
    \end{tikzcd}
    \]
    where $\Diff_{\partial_0}(\M\times[0,1])$ is the group of pseudo-isotopies of $\M$, and $\partial_1$ is the restriction to the top boundary of a pseudo-isotopy. 
\end{prop}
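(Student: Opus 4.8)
The plan is to produce the pseudo-isotopy directly from the data defining $\ps$, using the defining fibration $\ev_{\nu c}$ from Remark~\ref{rem:amb} and the fact (from Section~\ref{sec:ps}) that every class in the image of $\prealmap$ — and hence every class we feed into $\ps$ — is represented by a loop of framed embeddings fixing a neighbourhood $\nu_e$ of the basepoint $c(e)$. Concretely, given $\nu f\in\pi_1(\Emb(\nu\S^1,\M\#\,\S^3\times\S^1);\nu c)$, the connecting map $\delta_{\nu c}$ chooses an ambient isotopy $F_t\in\Diffp(\M\#\,\S^3\times\S^1)$ with $F_0=\Id$, $F_t\circ\nu c=\nu f_t$, and $F_1$ fixing $\nu c$ pointwise. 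The key point is that the path $t\mapsto F_t|_{(\M\#\,\S^3\times\S^1)\sm\nu c}=\M\sm\nu S$ is a path of diffeomorphisms of $\M\sm\nu S$ starting at the identity and ending at $\delta_{\nu c}(\nu f)$; gluing this path together along $[0,1]$ gives a diffeomorphism of $(\M\sm\nu S)\times[0,1]$ which is the identity on the bottom $(\M\sm\nu S)\times\{0\}$, and whose top restriction is $\delta_{\nu c}(\nu f)$. This is precisely a pseudo-isotopy of $\M\sm\nu S$, except one must check it is the identity near $\partial(\M\sm\nu S)\times[0,1]$, i.e.\ near $\partial\M$ and near $\partial\nu S=\S^2\times\S^1$.

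The first step is to arrange the ambient isotopy $F_t$ to be supported away from $\partial\M$ and supported in a fixed compact region: since $\nu f$ is a based loop (fixing $\nu_e$) with vanishing image in $\pi_1\Imm$ coming from a trivial class in $\pi_2$, the standard isotopy extension theorem for the fibre bundle $\ev_{\nu c}$ (Cerf--Palais) can be applied relative to $\partial\M\cup\nu_e$, so $F_t$ is supported in the interior, away from $\partial\M$, and fixes $\nu_e$. Next I would deal with the boundary sphere $\partial\nu S$: the diffeomorphism $\delta_{\nu c}(\nu f)$ of $\M\sm\nu S$ fixes $\nu c\subset\M_{\nu S}$, but to define $\ps$ one must extend it by $\Id$ over $\nu S$, which requires $\delta_{\nu c}(\nu f)$ to be the identity near $\partial\nu S$. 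One shows the ambient isotopy can be taken to fix a neighbourhood of $\nu S$ — indeed, the framed circle $\nu f_t$ can be kept inside a fixed ball $\TG$ (or, for the general grasper families, inside a fixed compact set) away from $\nu S$, so by isotopy extension relative to a neighbourhood of $\nu S$, the whole $F_t$ is the identity there. Then the pseudo-isotopy of $\M\sm\nu S$ glues with the constant pseudo-isotopy $\Id_{\nu S}\times[0,1]$ to give a pseudo-isotopy $\phc(\nu f)\in\pi_0\Diff_{\partial_0}(\M\times[0,1])$ of $\M$, and by construction $\partial_1(\phc(\nu f))=\delta_{\nu c}(\nu f)\cup\Id_{\nu S}=\ps(\nu f)$.

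It remains to check that $\phc$ is well defined, i.e.\ independent of the choices of representative loop $\nu f_t$ and of ambient isotopy extension $F_t$. Two ambient isotopy extensions of the same $\nu f_t$ differ by a loop in the fibre of $\ev_{\nu c}$, i.e.\ a loop of diffeomorphisms of $\M\sm\nu S$ based at $\Id$; such a loop yields a pseudo-isotopy that is isotopic rel $\partial_0$ to the trivial one (it extends over $\M\times[0,1]$ by the mapping torus construction, which is standard-ly $\partial_0$-trivial). Similarly, a homotopy of loops $\nu f_t$ in $\Emb(\nu\S^1,\M_{\nu S})$ through based framed loops lifts, by covering homotopy in the fibre bundle $\ev_{\nu c}$, to a homotopy of the resulting pseudo-isotopies rel $\partial_0$. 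Hence $\phc$ descends to a well-defined map on $\pi_1$, and the factorisation $\ps=\partial_1\circ\phc$ holds on the nose.

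The main obstacle I anticipate is the bookkeeping around the boundary $\partial\nu S=\S^2\times\S^1$: one has to be careful that the ambient isotopy $F_t$ can simultaneously be taken to fix $\nu_e\subset c$, to be compactly supported away from $\partial\M$, and to fix a neighbourhood of $\nu S$, so that both the extension-by-$\Id$ over $\nu S$ and the gluing into $\M\times[0,1]$ make sense — but this is exactly the kind of relative isotopy extension that the Cerf--Palais fibration (Remark~\ref{rem:amb}) supplies, applied to the pair consisting of the evaluation at $\nu c$ together with the restriction to a neighbourhood of $\partial\M\cup\nu S\cup\nu_e$, where the given framed isotopy is constant. A minor secondary point is matching conventions for which boundary component of $\M\times[0,1]$ is $\partial_0$ versus $\partial_1$, and ensuring the orientation/Dehn-twist conventions in Definition~\ref{def:Gay-twist-3T} are consistent with the direction in which one runs the path $F_t$; this is purely notational.
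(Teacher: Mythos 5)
Your approach has a genuine gap at its very first step, and it is not a repairable bookkeeping issue. You want to restrict the ambient isotopy $F_t$ of $\M\#\,\S^3\times\S^1$ to the complement of $\nu c$ and claim that $t\mapsto F_t|_{(\M\#\,\S^3\times\S^1)\sm\nu c}$ is a path of diffeomorphisms of $\M\sm\nu S$. But for $0<t<1$ the diffeomorphism $F_t$ carries $\nu c$ to $\nu f_t\neq\nu c$, so $F_t$ does \emph{not} preserve the complement of $\nu c$; the restriction maps $\M\sm\nu S$ diffeomorphically onto $(\M\#\,\S^3\times\S^1)\sm\nu f_t$, which is a different subset. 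Consequently neither the "path of diffeomorphisms of $\M\sm\nu S$" nor the track on $(\M\sm\nu S)\times[0,1]$ exists, and there is nothing to glue with $\Id_{\nu S}\times[0,1]$.品 Only the endpoints $t=0,1$ give genuine automorphisms of $\M\sm\nu S$, which is precisely why the paper does not attempt this direct route.

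The paper's actual proof is built around a different idea, which explains the name "parameterised handle cancellation." One first forms the \emph{genuine} track $\wt{F}(x,t)=(F_t(x),t)$ on $\X\times[0,1]$ with $\X=\M\#\,\S^3\times\S^1$ (this is unproblematic since one is not restricting to a complement), and then embeds $\X\times[0,1]$ as the middle slab of a 5-dimensional cobordism $\M'=(\M\times[0,\epsilon]\sm\nu\wh{R}')\cup(\X\times[0,1])\cup_{\nu c}H^2$: the bottom slab adds a 1-handle by deleting a pushed-in 3-ball (going from $\M$ to $\X$), and the 2-handle $H^2$ attached along $\nu c$ surgers $\X$ back to $\M$. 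A handle-cancellation argument (the 5-dimensional dotted-circle move: $\nu\wh{R}'=\D^3\times\D^2$ is an upside-down 2-handle cancelling the 1-handle, and $H^2$ cancels the 1-handle created by the dot) shows $\M'\cong\M\times[0,1]$. The diffeomorphism $F'=\Id\cup\wt{F}\cup\Id$ on $\M'$ then transports to a pseudo-isotopy of $\M$ whose top is $\ps(f)$. This cobordism step is the core content you are missing; without it there is no way to pass from a track on $\X\times[0,1]$ to a pseudo-isotopy of $\M$.

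Two further remarks on smaller points in your write-up. You also rely on being able to arrange $F_t$ to fix a neighbourhood of $\nu S$, on the grounds that the family $\nu f_t$ can be kept away from $\nu S$. But $\nu S$ is not a subset of $\M\#\,\S^3\times\S^1$; it was surgered away when forming $\M_{\nu S}$, so "fix a neighbourhood of $\nu S$" is not a condition one can impose on the ambient isotopy $F_t$ of $\M_{\nu S}$. The statement you want is that $F_1$ fixes $\nu c$, so that its restriction to the complement extends by the identity over $\nu S$ in $\M$ — which is exactly what $\delta_{\nu c}$ followed by $\cup\Id_{\nu S}$ encodes, and needs no extra support condition. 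Finally, the well-definedness discussion you sketch (comparing two ambient isotopy extensions via a loop of diffeomorphisms) is not needed once one works at the level of the $\pi_1$-to-$\pi_0$ connecting map of the Cerf--Palais fibration, since that map is already well defined on homotopy classes.
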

\begin{proof}
    First recall that a pseudo-isotopy is a diffeomorphism of $\M\times[0,1]$ that is the identity on $\M\times\{0\}$ and $\partial \M\times[0,1]$. Let us define the map $\phc$, the ``parameterised handle cancellation''.

    We denote $\X\coloneqq \M_{\nu R}$ for a fixed small unknotted sphere $R$, and forget the identification of $\M_{\nu R}$ with $\M\#\,\S^3\times\S^1$ for the moment.
    Suppose we are given $f_t\colon \nu\S^1\hra \X$, $f_0=f_1=\nu c$, and let $F\colon[0,1]\to\Diffp(\X)$, $F_0=\Id$, $F_s\circ f_0=f_s$, be an ambient isotopy extension. We consider its track 
    \[
        \wt{F}\colon \X \times[0,1]\to \X\times[0,1], \quad (x,t)\mapsto (F_t(x),t).
    \]
    Note that this is a diffeomorphism that is the identity $\wt{F}_0=\Id$ on $\X\times\{0\}$, and on the top on $\X\times\{1\}$ we have $\wt{F}_1=F_1$.
    Let us consider the manifold 
    \[
        \M'\coloneqq (\M\times[0,\e]\sm \nu \wh{R}')\cup (\X\times[0,1]) \cup_{\nu c} H^2.
    \]
    Here $\wh{R}'$ is a 3-ball in $\M\times[0,\e]$ with boundary $\partial \wh{R}'=R\subset \M\times\{\e\}$, obtained by pushing into the interior a ball $\wh{R}\subset \M\times\{e\}$ bounded by our unknotted sphere $R$. In particular, $\M\times[0,\e]\sm \nu \wh{R}'$ is a cobordism from $\M$ to $\X=\M_{\nu R}$. To the top of this we can glue $\X\times[0,1]$. To the new top we then attach the 5-dimensional 2-handle $H^2=\D^2\times\D^3$ along $\nu c\subset \X\times\{1\}$. At the top we see the surgery $\X_{\nu c}=(\X\sm\nu c)\cup \D^2\times\S^2\cong(\M\sm\nu R)\cup\nu R= \M$.
    
    In fact, the whole cobordism is diffeomorphic to $\M\times[0,1]$ by the cancellation of $h^2$ and $h^1$, where the latter we had created by removing $\nu \wh{R}'$. Namely, we are using the 5-dimensional analogue of the ``dotted notation'' procedure in Kirby diagrams of 4-manifolds: we have
    \[
        \M\times[0,\e]\sm \nu \wh{R}'
        \cong (\M\times[0,\e/2]\cup h^1\cup h^2)\sm\nu \wh{R}'
        \cong \M\times[0,\e/2]\cup h^1,
    \]
    where $h^1\cup h^2$ are in cancelling position, and the last equality holds because $\nu \wh{R}'=\D^3\times\D^2$ is precisely $h^2=\D^2\times\D^3$ turned upside down. Finally, the 2-handle $H^2$ in $\M'$ is attached along $\nu c$, so exactly cancels this $h^1$. Let $\psi\colon \M\times[0,1]\to \M'$ be a fixed diffeomorphism, and note that we can assume $\Psi|_{\M\times\{0\}}=\Id$ and $\psi\coloneqq\Psi|_{\M\times\{1\}}\colon \X_{\nu c}\to \M$ is our chosen diffeomorphism that is the identity on $\X\sm\nu c=\M\sm\nu R$.

    Using the decomposition of $\M'$ we can extend the track $\wt{F}$ to a diffeomorphism 
    \[
        F'\coloneqq\;\Id_{\M\times[0,\e]\sm \nu \wh{L}'}
        \cup \wt{F}
        \cup_{\nu c}\Id_{H^2}\colon 
        \M'\times[0,1]\ra \M'\times[0,1].
    \]
    Note that these maps indeed glue together, because $\wt{F}_0=\Id$ and $\wt{F}_1$ preserves $\nu c$.
    Then we define the desired pseudo-isotopy 
    \[
        \phc(f)\coloneqq
        \Psi^{-1}\circ F'\circ\Psi\colon\; 
        \M\times[0,1]\ra \M\times[0,1].
    \]
    This is indeed $\Id$ on $\M\times\{0\}$, whereas on the top we see:
    \[
        \partial_1\phc(f)=\Psi^{-1}\circ F'_1\circ\Psi|_{\M\times\{1\}}
        =\psi^{-1}\circ(F_1|_{\X\sm\nu c}\cup\Id_{\D^2\times\S^2})\circ\psi,
    \]
    which agrees with
    $\ps(f)$, since $\psi$ is the identity on $\M\sm\nu R$ and $\psi^{-1}\circ\Id_{\D^2\times\S^2}\circ\psi=\Id_{\nu R}$.
%
%
\end{proof}

\section{Watanabe's theta classes}\label{sec:W}

In Section~\ref{subsec:theta-def} we describe Watanabe's classes, and in Section~\ref{subsec:W-ps} we relate them to grasper classes.

\subsection{Definition of a theta class}
\label{subsec:theta-def}

Let $\Theta$ denote the graph with two vertices $v,w$ that are connected by three edges $e_1,e_2,e_3$. An embedding $\Theta\hra \M\sm\partial \M$ is determined by two elements $g_1,g_2\in\pi_1\M$ up to isotopy, since $d=4$. See Figure~\ref{fig:graph}(i). We can isotope any embedding so that the middle edge $e_2$ is short (contained in a small ball in $\M$), as in Figure~\ref{fig:graph}(ii). We represent $g_i\in\pi_1\M$ as the loops that go around the shaded disks, and the basepoint $v$ of $\M$ by the full square.  
\begin{figure}[!htbp]
    \centering
    \includegraphics[width=\linewidth]{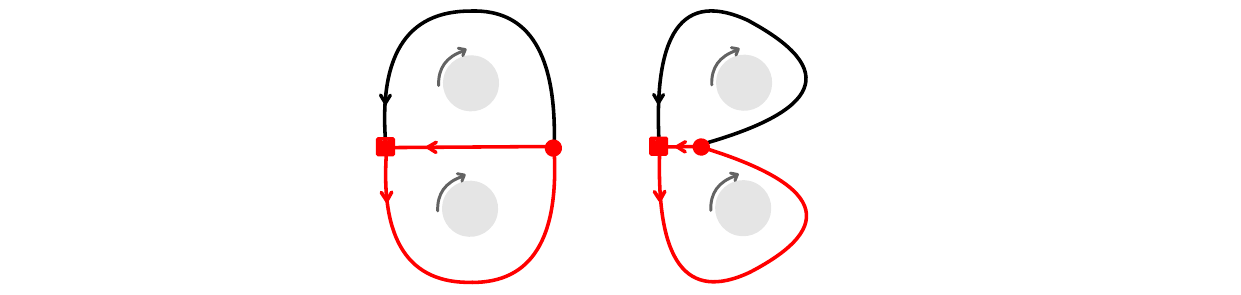}
    \caption{
        (i)~An embedded $\Theta$-graph, with associated group elements $g_1=e_1^{-1}\cdot e_2$ and $g_2=e_2^{-1}\cdot e_3^{-1}$. 
        (ii)~An isotopic embedding with $e_2$ short.}
    \label{fig:graph}
\end{figure}

Given such an embedding $\Theta_{g_1,g_2}$, Watanabe~\cite{Watanabe-dim-4} constructs a class
\[
    \wat(\Theta_{g_1,g_2})\in\pi_0\Diffp(\M).
\]
The following equivalent formulation of this construction was described by Botvinnik and Watanabe in~\cite{Botvinnik-Watanabe}. We first orient each edge, so that each vertex has at least one incoming and one outgoing edge (such a choice is always possible), as in Figure~\ref{fig:graph}(i).

\begin{figure}[!htbp]
    \centering
    \includegraphics[width=0.95\linewidth]{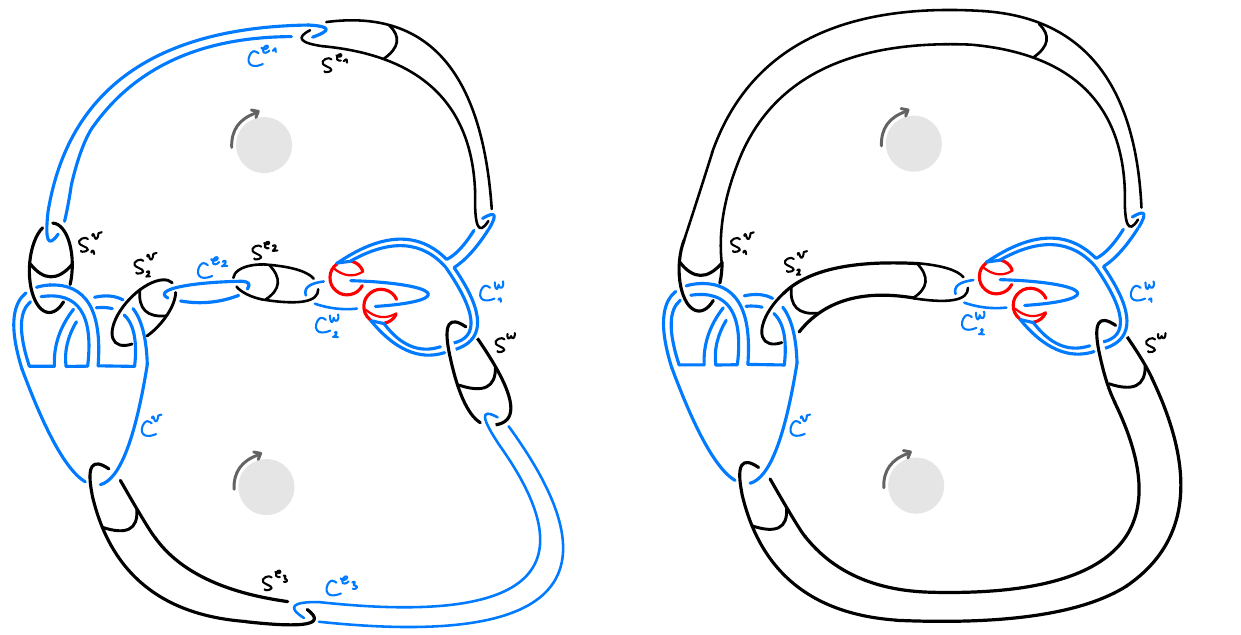}
    \caption{
        (i)~The link corresponding to the embedded $\Theta$-graph. 
        (ii)~The resulting link after the cancellation of the three Hopf link pairs corresponding to the edges.
    }
    \label{fig:theta1}
\end{figure}
Then, as in Figure~\ref{fig:theta1}(i) we replace each edge by the Hopf link $\Hopf$ from \eqref{eq-def:Hopf}, the vertex $v$ by the Borromean link $\Bor$ from \eqref{eq-def:Bor-link}, and the vertex $w$ by the Borromean family $\Bor_s$, $s\in\S^1$, from \eqref{eq-def:Bor-fam}, with  framings as in the end of Section~\ref{subsec:links}. Moreover, each half-edge indicates that a component of the Hopf link and one component of the vertex form a Hopf link as well. We obtain a family of framed $12$-component links
\[
    \Theta_{g_1,g_2}(s)\subset \M,
\]
parameterised by $s\in\S^1$ (where only one component depends on $s$).

Now consider the bundle whose fibre over $s\in\S^1$ is the surgery on the framed link $\Theta_{g_1,g_2}(s)\subset \M$. Similarly as in Proposition~\ref{prop:phc} this is the top of a bundle over $\S^1$ of 5-dimensional cobordisms obtained from $\M\times[0,1]$ by attaching 1- and 2-handles according to the given link; the belt spheres of 1-handles are the given 2-spheres, whereas the attaching circles of 2-handles are the given circles. For these handles we can use handle slides, cancellations and isotopies. The resulting bundle of surgeries will remain the same (up to bundle isomorphism) if these moves are done consistently throughout the family. 

For example, we can cancel the three Hopf links of spheres $S^{e_i}$ and circles $C^{e_i}$, one for each edge $i=1,2,3$, after we first slide the other strands linking the sphere over $C^{e_i}$. After an isotopy ``pulling to the left'' we obtain Figure~\ref{fig:theta1}(ii). See \cite{Botvinnik-Watanabe} for a detailed discussion.

Since $\Bor_s$ is trivial for a fixed $s$, one can isotope $\Theta_{g_1,g_2}(s)$ into a split collection of 0-framed Hopf links, so the result of surgery is diffeomorphic to $\M$. The classifying map for this $\M$-bundle over $\S^1$ determines  \emph{Watanabe's theta class}
\begin{equation}\label{eq-def:W}
    \wat(\Theta_{g_1,g_2})\in\pi_1B\Diffp(\M)\cong\pi_0\Diffp(\M).
\end{equation}

\subsection{Relation to parameterised surgery and graspers}
\label{subsec:W-ps}
We now express \eqref{eq-def:W} in terms of semisimple grasper classes using the parameterised surgery map~\eqref{eq-def:ps}.

\begin{thm}\label{thm:Theta}
    For a 4-manifold $\M$ and $\Theta_{g_1,g_2}\colon\Theta\hra \M$ there are equalities
    \[
        \wat(\Theta_{g_1,g_2})=\ps\big(
        \sref^\circlearrowright_\circlearrowright(g_1g_2^{-1}tg_2\big)
        -
        \sref^\circlearrowright_\circlearrowright(g_1) ).
    \]
\end{thm}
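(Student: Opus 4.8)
The plan is to translate the handle-theoretic description of $\wat(\Theta_{g_1,g_2})$ from Section~\ref{subsec:theta-def} into a statement about the parameterised surgery map $\ps$ applied to an explicit loop of framed embedded circles, and then identify that loop with the semisimple grasper family $\sref^\circlearrowright_\circlearrowright$ using the Dax invariant computations of Theorem~\ref{thm:selfref}. First I would analyze the link $\Theta_{g_1,g_2}(s)\subset\M$ obtained after the cancellation of the three Hopf pairs, i.e. Figure~\ref{fig:theta1}(ii). After the cancellations, only the data of the two Borromean vertices remains, one static ($\Bor$, coming from $v$) and one moving ($\Bor_s$, coming from $w$). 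The key observation is that the surgery on the sphere components of the remaining link produces $\M$ with two trivial $\S^1\times\S^3$ summands (or, after a further cancellation, a single one), and the circle component of $\Bor_s$ that varies with $s$ traces out precisely a loop in $\Emb(\nu\S^1,\M\#\,\S^3\times\S^1;\nu c)$. The group elements recording how this moving circle threads through the handles are exactly determined by the guiding arcs of the theta embedding, which carry $g_1$ and $g_2$.

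Next I would carry out the bookkeeping of these group elements. The bundle of surgeries over $\S^1$ is classified by a diffeomorphism that, by the argument of Proposition~\ref{prop:phc} run in reverse (parameterised handle cancellation), equals $\ps$ applied to the connecting loop of framed circles. The two Borromean vertices contribute two ``twirls'' of a piece of $c$ around a meridian sphere, and the guiding arcs between them pass through the remaining $\S^3\times\S^1$ summand and around loops labelled by $g_1$ and $g_2$; keeping careful track of base points and the order of strands yields the group element $g_1g_2^{-1}tg_2$ for the ``interesting'' part of the family, where $t$ is the generator of the free $\Z$ factor created by surgery. The second term $\sref^\circlearrowright_\circlearrowright(g_1)$ is a correction: it accounts for the contribution of the static vertex $\Bor$ (which by itself, before being linked with the moving vertex through the bar, already produces a nontrivial-looking but ultimately subtractible family depending only on $g_1$). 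Comparing the resulting picture to Figure~\ref{fig:sref2}(ii) identifies both pieces as semisimple grasper families $\sref^\circlearrowright_\circlearrowright$, giving the claimed formula, and one uses the canonical lift to framed embeddings from Section~\ref{subsec:framed} together with the fact that the theta construction preserves the $0$-framing (noted at the end of Section~\ref{subsec:links}) to see the framings match.

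The main obstacle I anticipate is the precise identification of the group elements and the sign/order conventions: translating Watanabe's (or Botvinnik--Watanabe's) handle diagram into the grasper picture requires matching orientations of edges, base points of the two vertices, and the direction of the $s$-parameter, and a wrong choice flips $h$ to $h^{-1}$ or conjugates it. Fortunately, Theorem~\ref{thm:selfref} tells us $\sref^\circlearrowright_\circlearrowright(h)=\prealmap(h+h^{-1})$, so the output is symmetric in $h\leftrightarrow h^{-1}$; this robustness means I only need to pin down the group element up to inversion and up to the ambiguity in reading off $g_1,g_2$, which removes much of the delicacy. The remaining subtlety is justifying that the ``two spheres'' in the surgery description can be reduced to a single surgered sphere $S$ (so that $\ps$ rather than a two-sphere version of parameterised surgery applies) -- this should follow from the lemma in Section~\ref{subsec:links} that $\Bor$ simplifies to an unlink plus a connect-sum of meridians, which lets one of the two $\S^3\times\S^1$ summands be absorbed. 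Once these identifications are in place, the equality is immediate from the definitions of $\ps$, $\sref^\circlearrowright_\circlearrowright$, and the parameterised handle cancellation of Proposition~\ref{prop:phc}.
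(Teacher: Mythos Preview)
Your overall strategy matches the paper's: reduce Watanabe's family of surgeries, via handle slides and Hopf-pair cancellations, to a 2-component link consisting of one framed circle varying with $s$ and one fixed sphere, recognise the resulting monodromy as $\ps$ applied to that circle loop, and then identify the loop with semisimple grasper families. However, your account of where the two $\sref^\circlearrowright_\circlearrowright$ terms come from is off and would not yield a proof as written.

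The two terms do \emph{not} correspond to ``one twirl from each Borromean vertex'', and the subtracted $\sref^\circlearrowright_\circlearrowright(g_1)$ is not a correction coming from the static vertex $v$: that vertex is genuinely static and produces no family by itself. What actually happens is that after \emph{all} the slides and cancellations (two further Hopf pairs beyond Figure~\ref{fig:theta1}(ii), sliding strands of $C^v$ over $C_2^w$ and $C_1^w$), one is left with a \emph{single} loop $C^v(s)$ together with one sphere. This single loop is then decomposed as $f_1+f_2$ via the splitting-leaf move of Remark~\ref{rem:splitting-leaf}: the leaf sphere of the grasper describing $C^v$ links the band in two strands, and splitting that leaf into two meridians produces two $\sref^\circlearrowright_\circlearrowright$-shaped families, one with the opposite linking sign (hence the minus in front of $\sref^\circlearrowright_\circlearrowright(g_1)$). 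Both terms come from the same self-referential family, not from separate vertices.

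Two smaller remarks. First, Theorem~\ref{thm:selfref} and the Dax invariant are not used in this proof: the identification of each $f_i$ with $\pm\sref^\circlearrowright_\circlearrowright(h)$ is made by direct picture comparison with Figure~\ref{fig:sref2}(ii), reading the group element off the guiding arc; Theorem~\ref{thm:selfref} enters only afterwards in Corollary~\ref{cor:Theta}. Second, the reduction to a single surgered sphere is obtained by the additional handle cancellations just described, not via the Borromean-unlink lemma of Section~\ref{subsec:links}; your proposed route through that lemma is a detour.
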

\begin{proof}
    We continue to cancel surgeries. After two handle slides of $C^v$ (the two strands in its right-hand side band) over $C_2^w$, we can cancel the Hopf pair $(S_2^v, C_2^w)$, and again isotope the result by pulling to the left. We obtain 4-component link shown in Figure~\ref{fig:theta2}(i).
    
    Similarly, now slide the other two strands of $C^v$ over $C_1^w$ in order to cancel the Hopf pair $(S_1^v,C_1^w)$, giving the 2-component link $C^v,S_2^v$ in Figure~\ref{fig:theta2}(ii); this is precisely the family \cite[Prop.\ 4.2]{Botvinnik-Watanabe}.
    \begin{figure}[!htbp]
        \centering
        \includegraphics[width=0.95\linewidth]{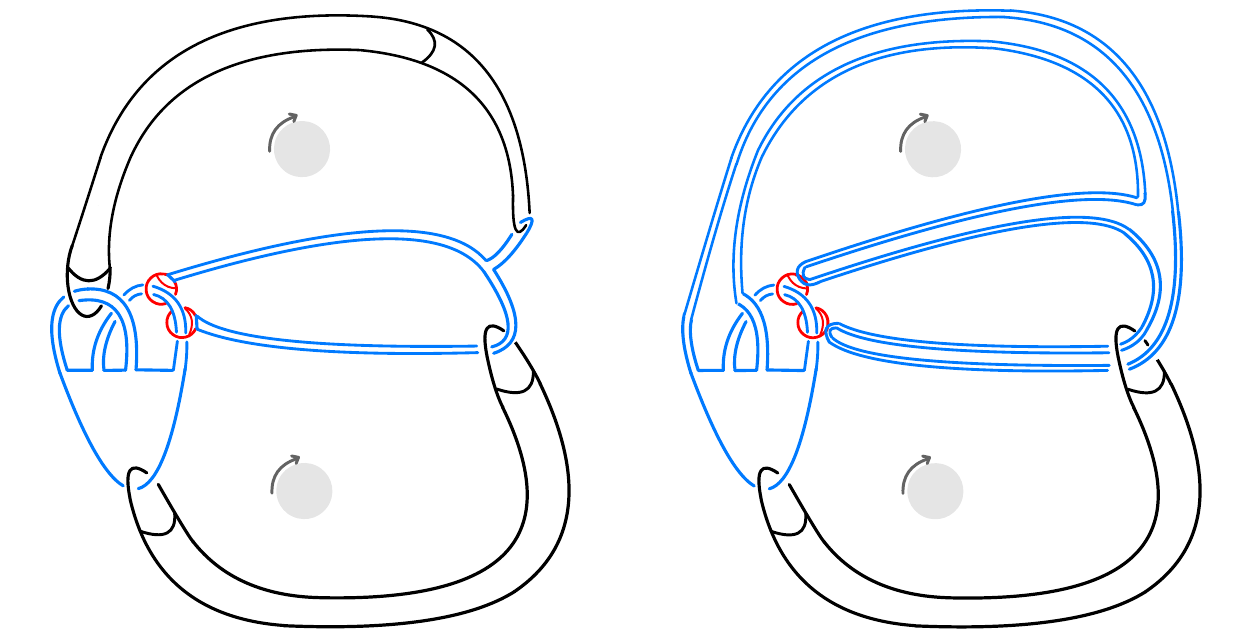}
        \caption{
            ~(i) After the cancellation of the Hopf link in the middle.
            ~(ii) After the cancellation of the Hopf link at the top.
        }
        \label{fig:theta2}
    \end{figure}

   Now, instead of forming the bundle of surgeries on the framed link $C^v(s)\sqcup S_2^v$, we can first do surgery only on $S_2^v$, and view $C^v(s)\colon\nu\S^1\hra\M\#\,\S^3\times\S^1$. The bundle of surgeries on $C^v(s)$ has the monodromy given as follows: lift $C^v$ to an ambient isotopy of $\M\#\,\S^3\times\S^1$, then remove $\nu c$ and add $\nu S$. Therefore, by definition of $\ps$ in \eqref{eq-def:ps} we have
    \[
        \wat(\Theta_{g_1,g_2}) = \ps(C^v).
    \]
    Moreover, we can now use the splitting-leaf move from Remark~\ref{rem:splitting-leaf} to write the family $C^v$ as the sum of families $f_1,f_2\colon\S^1\to\Emb(\nu\S^1,\M\sm \nu S_3)$ depicted in Figure~\ref{fig:theta3}(i) and (ii), using the general grasper notation of Section~\ref{subsec:graspers}. 
        \begin{figure}[!htbp]
        \centering
        \includegraphics[width=0.95\linewidth]{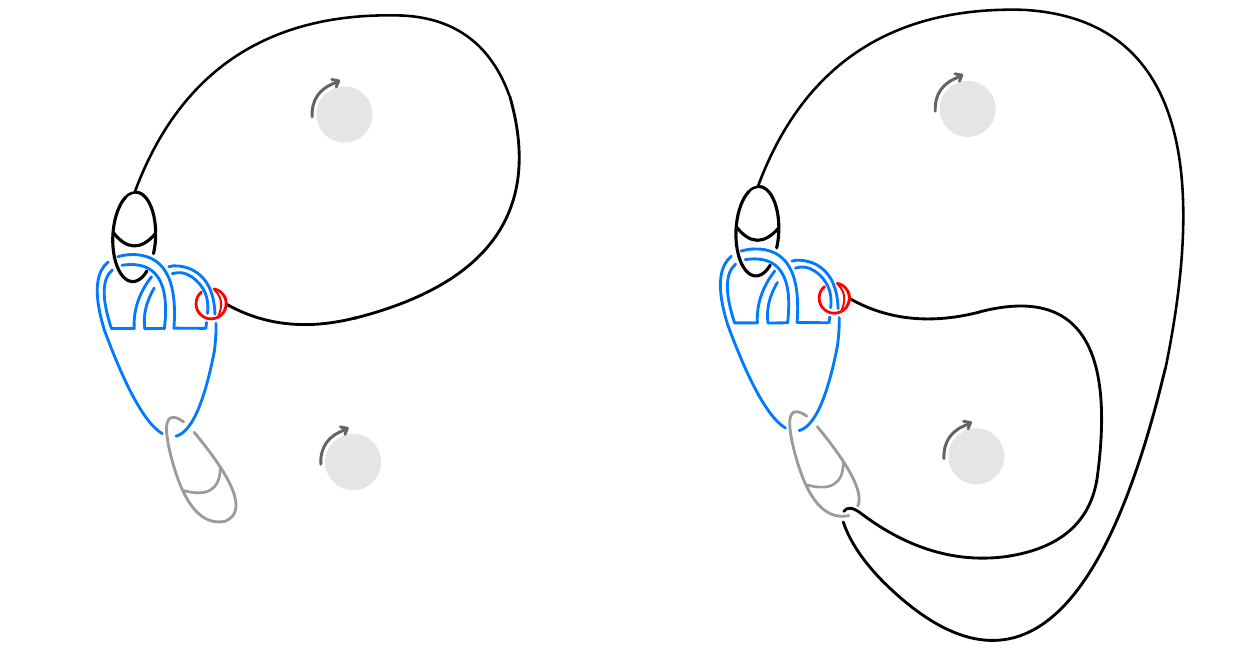}
        \caption{
            Watanabe's theta class is the parameterised surgery on the sum of two depicted knotted families. 
            (i)~The red sphere links the blue band oppositely to Figure~\ref{fig:sref2}(ii).
            (ii)~The red sphere links the blue band correctly, but now the bar links the sphere at the bottom.
        }
        \label{fig:theta3}
    \end{figure}
    
    Clearly, each $f_i$ is a family of the shape $\pm\sref^\circlearrowright_\circlearrowright(h)$, as in Figure~\ref{fig:sref2}, for some $h\in\pi_1(\M\#\,\S^3\times\S^1)$(ii). We next determine these group elements and signs.
    
    Firstly, for the family $f_2$ in Figure~\ref{fig:theta3}(i) we have $h=g_1$, and the linking of the red sphere with the band is opposite to the one $\sref^\circlearrowright_\circlearrowright$ given in Figure~\ref{fig:sref2}(ii). Thus $[f_2]=-\sref^\circlearrowright_\circlearrowright(g_1)$.
    
    Secondly, for the family $f_1$ in Figure~\ref{fig:theta3}(ii) the green guiding arc follows edges $e_1^{-1}e_2$ (giving $g_1$), then $e_2^{-1}e_3^{-1}$ (giving $g_2$), then goes around the meridian to the base sphere (giving $t$), and finally back along $e_3e_2$ (giving $g_2^{-1}$). Thus, in this case $h=g_1g_2tg_2^{-1}$. The red sphere now links correctly, so $[f_1]=\sref^\circlearrowright_\circlearrowright(g_1g_2tg_2^{-1})$, as claimed.
\end{proof}

    We can now prove the first half of Theorem~\ref{thm-intro:W-BG}.
\begin{cor}\label{cor:Theta}
    For a 4-manifold $\M$ and $\Theta_{g_1,g_2}\colon\Theta\hra \M$ there is an equality
    \[
        \wat(\Theta_{g_1,g_2})
        =\ps\circ\sref^\circlearrowright_\circlearrowright(g_1g_2^{-1}tg_2)
        =\ps\circ\prealmap(g_1g_2^{-1}tg_2+(g_1g_2^{-1}tg_2)^{-1}).
    \]
\end{cor}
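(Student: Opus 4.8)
The plan is to derive the corollary from Theorem~\ref{thm:Theta} together with the two vanishing statements established in Corollary~\ref{cor:selfref}. First I would recall that Theorem~\ref{thm:Theta} gives
\[
\wat(\Theta_{g_1,g_2})=\ps\big(\sref^\circlearrowright_\circlearrowright(g_1g_2^{-1}tg_2)-\sref^\circlearrowright_\circlearrowright(g_1)\big).
\]
Since $\ps$ is a homomorphism of groups (it is a composite of the connecting map $\delta_{\nu c}$ with the extension-by-identity map, both of which are homomorphisms), it suffices to show that the second term contributes nothing, i.e.\ that $\ps\circ\sref^\circlearrowright_\circlearrowright(g_1)=0$. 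By Lemma~\ref{lem:g-in-kerr} we have $\Z[\pi_1\M]<\ker(\prealmap)$ inside $\Z[\pi_1(\M\#\,\S^1\times\S^3)]$, so $\prealmap(g_1)=0$ for $g_1\in\pi_1\M$; combined with Theorem~\ref{thm:selfref}, which identifies $\sref^\circlearrowright_\circlearrowright(g_1)$ with $\prealmap(g_1+g_1^{-1})$, and again using that $g_1,g_1^{-1}\in\pi_1\M$ lie in $\ker(\prealmap)$, we conclude $\sref^\circlearrowright_\circlearrowright(g_1)=0$ already in $\pi_1(\Emb(\nu\S^1,\M\#\,\S^1\times\S^3);\nu c)$, hence a fortiori after applying $\ps$. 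This yields the first claimed equality $\wat(\Theta_{g_1,g_2})=\ps\circ\sref^\circlearrowright_\circlearrowright(g_1g_2^{-1}tg_2)$.

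For the second equality I would simply apply Theorem~\ref{thm:selfref} with $h=g_1g_2^{-1}tg_2\in\pi_1\M\ast\Z$: it states $\sref^\circlearrowright_\circlearrowright(h)=\prealmap(h+h^{-1})$ in $\pi_1(\Emb(\nu\S^1,\M\#\,\S^1\times\S^3);\nu c)$ (the framed lift is the canonical stationary one, as in Section~\ref{subsec:framed}). Postcomposing with $\ps$ gives $\ps\circ\sref^\circlearrowright_\circlearrowright(h)=\ps\circ\prealmap(h+h^{-1})=\ps\circ\prealmap\big(g_1g_2^{-1}tg_2+(g_1g_2^{-1}tg_2)^{-1}\big)$, as desired.

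The only subtlety — and the step I would be most careful about — is bookkeeping between the unframed and framed settings: Theorems~\ref{thm:selfref} and the identity $\Z[\pi_1\M]<\ker(\prealmap)$ are stated for $\pi_1(\Emb(\S^1,\X);c)$, while $\ps$ is defined on the framed space $\pi_1(\Emb(\nu\S^1,\M_{\nu S});\nu c)$. Here I would invoke Section~\ref{subsec:framed}: all the families involved ($\sref^\circlearrowright_\circlearrowright$, $\prealmap$ of any element, and the constant family) fix a neighbourhood of the basepoint and have vanishing image in $\pi_2$, so they admit canonical lifts to the framed space, and the relations among them lift verbatim. There is no genuine obstacle here, but it is worth stating explicitly that the equalities of Theorem~\ref{thm:selfref} are taken with these canonical framed lifts, so that applying $\ps$ is legitimate. (Note also that a small discrepancy in the group element between Theorem~\ref{thm:Theta}, which produces $g_1g_2tg_2^{-1}$, and the corollary's $g_1g_2^{-1}tg_2$ is a matter of the orientation convention for the $g_i$ read off the edges of $\Theta$; I would align these conventions once at the start, or absorb the difference into the relabelling $g_2\mapsto g_2^{-1}$, which is harmless since $g_2$ ranges over all of $\pi_1\M$.)
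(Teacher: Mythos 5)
Your proof is correct and uses exactly the same ingredients as the paper's: Theorem~\ref{thm:Theta}, then Theorem~\ref{thm:selfref} to rewrite $\sref^\circlearrowright_\circlearrowright(h)=\prealmap(h+h^{-1})$, and Lemma~\ref{lem:g-in-kerr} (together with $\ps$ being a homomorphism) to kill the $\sref^\circlearrowright_\circlearrowright(g_1)$ term. Your care about the framed lifts is appropriate but unproblematic, and your parenthetical about $g_1g_2tg_2^{-1}$ versus $g_1g_2^{-1}tg_2$ correctly flags a sign discrepancy \emph{internal} to the proof of Theorem~\ref{thm:Theta} (its proof computes $g_1g_2tg_2^{-1}$ while its statement reads $g_1g_2^{-1}tg_2$); since $g_2$ ranges over all of $\pi_1\M$ this is a harmless relabelling and does not affect the derivation of the corollary from the theorem as stated.
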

\begin{proof}
    By Theorem~\ref{thm:Theta} we have $\wat(\Theta_{g_1,g_2})=\ps(\sref^\circlearrowright_\circlearrowright(g_1g_2^{-1}tg_2)-\sref^\circlearrowright_\circlearrowright(g_1))$ 
    and by Theorem~\ref{thm:selfref} we have $\sref^\circlearrowright_\circlearrowright(h)=\prealmap(h+h^{-1})$. Moreover, by Lemma~\ref{lem:g-in-kerr} we have $\prealmap(g_1)=\prealmap(g_1^{-1})=0$, since we are in the case $\X=\M\#\S^1\tm\S^3$. This concludes the proof.
\end{proof}

    Combining Theorem~\ref{thm:Theta} and Corollary~\ref{cor:selfref} gives the following.
\begin{cor}\label{cor:Theta-S4}
    In $\M=\S^4$ there is a unique isotopy class $\Theta\colon\Theta\hra \M$, and we have
    \[
        \wat(\Theta)=\ps\circ
        \sref^\circlearrowright_\circlearrowright(t)
        =
        \ps\circ\prealmap(t).
    \]
\end{cor}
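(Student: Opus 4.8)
The plan is to specialise the two preceding results to $\M=\S^4$. First I would record the uniqueness statement: since $\S^4$ is closed, an embedding $\Theta\hra\S^4=\S^4\sm\partial\S^4$ is, by the discussion in Section~\ref{subsec:theta-def}, determined up to isotopy by the pair $(g_1,g_2)\in\pi_1\S^4\times\pi_1\S^4$; as $\pi_1\S^4=1$ this pair must be $(1,1)$, so there is a single isotopy class of embedded $\Theta$-graph, which we denote simply by $\Theta$.

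Next I would evaluate the formula of Theorem~\ref{thm:Theta} at $g_1=g_2=1$. Then $g_1g_2^{-1}tg_2=t$, so the theorem reads
\[
    \wat(\Theta)=\ps\big(\sref^\circlearrowright_\circlearrowright(t)-\sref^\circlearrowright_\circlearrowright(1)\big).
\]
Here $\M\#\,\S^3\times\S^1=\S^3\times\S^1$, so the arguments live in $\pi_1(\Emb(\nu\S^1,\S^3\times\S^1);\nu c)$, and Corollary~\ref{cor:selfref} applies: it gives $\sref^\circlearrowright_\circlearrowright(1)=0$ and $\sref^\circlearrowright_\circlearrowright(t)=\prealmap(t)$. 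Substituting, the difference inside $\ps$ collapses to $\sref^\circlearrowright_\circlearrowright(t)=\prealmap(t)$, which yields $\wat(\Theta)=\ps\circ\sref^\circlearrowright_\circlearrowright(t)=\ps\circ\prealmap(t)$, as claimed.

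There is no genuine obstacle here; all the content has already been extracted into Theorem~\ref{thm:Theta} (the handle-cancellation computation of Watanabe's class) and Theorem~\ref{thm:selfref}/Corollary~\ref{cor:selfref} (the reduction of the semisimple grasper $\sref^\circlearrowright_\circlearrowright$ to a linear combination of simple graspers, together with $1,t^{-1}\in\ker(\prealmap)$). The only thing to double-check is the trivial bookkeeping that $g_1g_2^{-1}tg_2$ becomes $t$ and $g_1$ becomes $1$ upon setting $g_1=g_2=1$, and that the subtraction takes place within the abelian subgroup $\im(\prealmap)$ so that $\sref^\circlearrowright_\circlearrowright(t)-\sref^\circlearrowright_\circlearrowright(1)$ is unambiguous.
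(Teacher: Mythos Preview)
Your proposal is correct and matches the paper's approach exactly: the paper simply states that the corollary follows by ``combining Theorem~\ref{thm:Theta} and Corollary~\ref{cor:selfref}'', which is precisely the specialisation to $g_1=g_2=1$ together with $\sref^\circlearrowright_\circlearrowright(1)=0$ and $\sref^\circlearrowright_\circlearrowright(t)=\prealmap(t)$ that you carry out.
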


\begin{rem}
    Watanabe's construction is a generalisation of Gusarov--Habiro clasper surgery for classical knots~\cite{Gusarov,Habiro}, and gives more generally classes
\[
    \wat(\Pi_e)\in\pi_{n(d-3)}\Diffp(\M), 
\]
    for an embedding $\Pi_e\colon\Pi\hra \M$ of a trivalent graph $\Pi$ with $2n$ vertices into a smooth $d$-manifold $\M$. This first appeared in \cite{Watanabe-dim-odd} for $d$ odd, then in \cite{Watanabe-dim-4,Watanabe-theta} for $d=4$, and in \cite{Watanabe-dim-4-addendum} for $d$ even. Note that $\Pi=\Theta$ is the single trivalent graph with two vertices.

    Botvinnik and Watanabe showed in \cite{Botvinnik-Watanabe} that in all cases the construction can be simplified so that only a link of two components is used. But this means that the classes are in the image of $\ps\colon \pi_{n(d-3)}\Emb(\nu\S^1,\M\#\,\S^{d-1}\times\S^1)\to \pi_{n(d-3)-1}\Diffp(\M)$. In \cite{K-families} we defined for a $d$-manifold $\X$ graspers $\TG\colon\D^d\hra \X$ of degree $n\geq1$ and used them to construct classes in $\pi_{n(d-3)}\Emb(\S^1,\X)$, analogous to the map $\prealmap$ (which is the case $n=1$ and $d=4$). We will study connections between $\ps\circ\prealmap$ and $\wat(\Pi_e)$ in future work.
\end{rem}

\section{Budney--Gabai's barbell diffeomorphisms}
\label{sec:BG}

\subsection{Definition of a barbell}
\label{subsec:barbell-def}
Fix a neat embedding $u=(u_1,u_2)\colon\D^1\sqcup\D^1\hra\D^4$. Let $bg\in\pi_1(\Embp(\nu\D^1,\D^4\sm\nu u_2);\nu u_1)$ be the loop of framed arcs obtained by twirling a piece of $u_1$ around the meridian sphere $S_2$ of the arc $u_2$, and thickening; see \cite[Rem.\ 5.4]{Budney-Gabai}.

Similarity of $bg$ and the simple grasper family from Figure~\ref{fig:bg-bw}(i), and Section~\ref{sec:graspers}; framings on grasper classes were discussed in Section~\ref{subsec:framed}.
\begin{figure}[!htbp]
    \centering
    \includegraphics[width=0.9\linewidth]{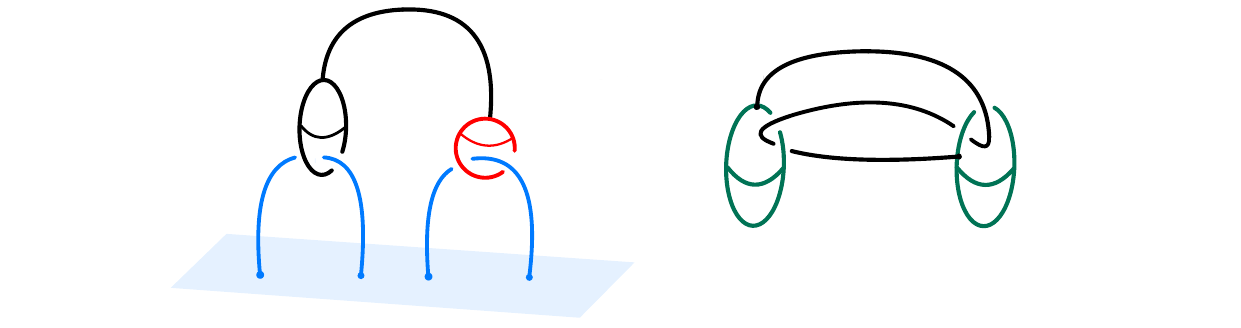}
    \caption{
        (i)~The class $bg\in\pi_1(\Embp(\nu\D^1,\D^4\sm\nu u_2);\nu u_1)$ is obtained by twirling a piece of $u_1$ around the meridian of $u_2$. 
        (ii)~A barbell $\barbemb\colon \barbell\hra \S^4$.
    }
    \label{fig:bg-bw}
\end{figure}
\begin{rem}\label{rem:symmetric}
    Analogously to Lemma~\ref{lem:symmetric}, the family $bg$ can be represented by twirling $u'$ around the negative meridian of $u$, or in a symmetric manner as in Figure~\ref{fig:realmap-symmetric}. 
\end{rem}
\begin{rem}\label{rem:TG*}
    Let $\TG$ be a general grasper with a simple root ball; recall from Definition~\ref{def:root-leaf} that this means that there is a single point $p\in\S^1$ so that $\wh{R}\cap c=\{c(p)\}$. Observe that $\nu(\wh{R}\sqcup L\cup arc)\cong\S^2\tm\D^2$, so $\TG\sm(\nu\wh{R}\cap c)$ is a barbell! We denote it by $\barbemb_\TG$.
    
    Moreover, the grasper family for $\TG$ can be defined as the image under the map
    \[
    \TG_*\colon\pi_1(\Emb(\D^1,\S^2\tm\D^2); u)\ra \pi_1(\Emb(\S^1,\X); c)
    \]
    of the twirling family $bg$ of the arc $u$ around the sphere $\S^2\tm\{0\}$. That is, $\TG_*(\gamma)$ is given as $\TG\circ\gamma\circ\TG^{-1}$ on a neighbourhood of $p\in\S^1$, and by the constant family on the rest of $\S^1$.
\end{rem}

Budney and Gabai consider the manifold $\barbell\coloneqq\D^4\sm \nu(u_1\sqcup u_2)$, called the \emph{model barbell}, and define in \cite[Sec.\ 5]{Budney-Gabai} a diffeomorphism of $\barbell$ rel boundary, called the \emph{barbell map} $\amb(bg)$. It is the result of applying isotopy extension 
\[
    \amb\colon\pi_1(\Embp(\nu\D^1,\D^4\sm\nu u_2);\nu u_1)\ra
    \pi_0 \Diffp(\barbell)
\]
to the loop $bg$.
Namely, consider the fibration sequence $\Diffp(\barbell) \hra \Diffp(\D^4)\to \Embp(\nu(\D^1\sqcup\D^1),\D^4)$ and let $\amb$ be its connecting map (compare with the map $\delta_{\nu c}$ in~\eqref{eq-def:ps} and Remark~\ref{rem:amb}).

Now note that there is an identification with the boundary connect-sum:
    \begin{equation}\label{eq:barbell}
        \S^2\times\D^2\,\natural\,\S^2\times\D^2\cong \D^4\sm \nu(u_1\sqcup u_2)\eqqcolon\barbell,
    \end{equation}
which takes each $\S^2\times\{pt\}$ to the meridian sphere $S_i$ to $u_i$, and each $\{pt\}\times\D^2$ to a 2-disk in $\D^4$ whose boundary $c_i$ has one half of the boundary on $\partial\D^4$ while the other half is a pushoff of $u_i$. 


We define the \emph{spine} of the model barbell to be the the union of the two spheres $S_i$ and the bar, an arc in $\barbell$ connecting them. As in \cite{Budney-Gabai}, we draw an embedding $\barbemb$ by drawing the image of the barbell spine, as in Figure~\ref{fig:bg-bw}(ii) for $\M=\S^4$. In that example the bar describes the word $\bc_2\bc_1$, where $\bc_i$ is the homotopy class of the meridian circle of $\barbemb(S_i)$; compare to Definition~\ref{def:bar-word-and-element}.

\begin{defn}
    Given a smooth 4-manifold $\M$, we call an embedding $\barbemb\colon \barbell\hra \M$ a \emph{barbell}. We define the \emph{barbell diffeomorphism} as
\[
    \bg({\barbemb})\in\pi_0\Diffp(\M)
\]
   given as the barbell map $\amb(bg)$ on the image of $\barbemb$ and the identity on the complement $\M\sm \barbemb(\barbell)$.
\end{defn}

A consequence of Remark~\ref{rem:symmetric} is the following result from~\cite{Budney-Gabai}. See also Lemma~\ref{lem:2-torsion} below.

\begin{lem}[Symmetry of the barbell diffeomorphism]\label{lem:symmetric-barbell}
    If $\barbemb$ is a barbell with both spheres nullhomotopic, then the inverse of $\bg(\barbemb)$ is the barbell diffeomorphism on the same barbell but with exchanged roles of $S_1$ and $S_2$.
\end{lem}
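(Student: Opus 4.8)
The plan is to push the statement down to the model barbell $\barbell$, where the cuff-swapped construction is manifestly related to the original one by the symmetry of the twirling loop, and then implant via $\barbemb$. First I would record a general property of the isotopy-extension map $\amb$: it sends a time-reversed loop to the inverse mapping class. Indeed, if a loop $\gamma$ of framed arcs lifts to a path $\Gamma_t\in\Diffp(\D^4)$ with $\Gamma_0=\Id$ and $\Gamma_t\circ\gamma_0=\gamma_t$, then $\overline\gamma_t:=\gamma_{1-t}$ lifts to $\overline\Gamma_t:=\Gamma_{1-t}\circ\Gamma_1^{-1}$, whence $\amb(\overline\gamma)=\overline\Gamma_1|_{\barbell}=\Gamma_1^{-1}|_{\barbell}=\amb(\gamma)^{-1}$; the identical computation for the fibration of Remark~\ref{rem:amb}, applied to the implanted construction, shows that $\bg(\barbemb)^{-1}$ is obtained by implanting $\amb(\overline{bg})$ via $\barbemb$. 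So it is enough to identify the time-reverse $\overline{bg}$ with the loop defining the cuff-swapped barbell map.

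For this I would invoke Remark~\ref{rem:symmetric}, the barbell analogue of Lemma~\ref{lem:symmetric}. The loop $bg$ --- twirling a piece of $u_1$ around the meridian sphere $S_2$ of $u_2$ --- is homotopic, \emph{as a loop in the combined space} $\Embp(\nu(\D^1\sqcup\D^1),\D^4)$, to the loop twirling a piece of $u_2$ around the \emph{negative} meridian of $u_1$; the homotopy is exactly the one from the proof of Lemma~\ref{lem:symmetric} that interchanges which strand ``moves'' and which is ``stationary'', so it really does connect these two loops even though one keeps $u_2$ fixed and the other keeps $u_1$ fixed. By Lemma~\ref{lem:inverse}, reversing the meridian inverts the loop, so $bg$ is the time-reverse of the loop $bg'$ that twirls a piece of $u_2$ around the positive meridian $S_1$ of $u_1$, and $bg'$ is by construction the defining loop for the barbell map in which the roles of $S_1$ and $S_2$ are exchanged. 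Hence $\amb(\overline{bg})=\amb(bg)^{-1}=\amb(bg')$ in $\pi_0\Diffp(\barbell)$.

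It then remains to transport this identity through $\barbemb$. Using naturality of the isotopy-extension map under the self-diffeomorphism of $\barbell$ that swaps the two summands of \eqref{eq:barbell}, the class $\amb(bg')$ implanted via $\barbemb$ is precisely $\bg$ of the barbell obtained from $\barbemb$ by exchanging $S_1$ and $S_2$, so $\bg(\barbemb)^{-1}=\bg(\barbemb^{\mathrm{swap}})$ as claimed. This is the point where the hypothesis is used: with both cuffs $\barbemb S_1,\barbemb S_2$ nullhomotopic in $\M$, the swapped data is again a barbell in our sense and its bar word is the honest reverse of $\W$, so the two sides of \eqref{eq:barbell} match up after implantation; when a cuff carries a nontrivial homotopy class this last identification has to be handled with care.

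I expect the main obstacle to be the orientation and framing bookkeeping in the middle step --- tracking which of the two meridians acquires the sign in Remark~\ref{rem:symmetric}, and checking that the $\Z/2$ framing ambiguity discussed in Section~\ref{subsec:framed} does not intervene --- together with pinning down exactly what the nullhomotopy hypothesis buys in the implantation step. An alternative, more hands-on route avoids $\amb$ altogether: one exhibits the explicit isotopy of families realizing Remark~\ref{rem:symmetric} \emph{inside} $\barbemb(\barbell)$ (which is legitimate because each $S_i=\S^2\times\{pt\}$ already bounds in $\barbell$) and feeds it through the fibration of Remark~\ref{rem:amb}, mimicking the Dax-invariant computation in the proof of Lemma~\ref{lem:symmetric}. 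Finally, combined with the observation that in sufficiently symmetric ambient manifolds $\barbemb^{\mathrm{swap}}$ is isotopic to $\barbemb$, this yields the $2$-torsion statements recorded in Lemma~\ref{lem:2-torsion}.
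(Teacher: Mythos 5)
The paper itself does not prove this lemma: it records it as ``a consequence of Remark~\ref{rem:symmetric}'' and cites Budney--Gabai, so there is no written argument to compare against. Your write-up is a reasonable elaboration of that hint. The reduction to the intrinsic identity $\sigma\circ\amb(bg)\circ\sigma^{-1}=\amb(bg)^{-1}$ in $\pi_0\Diffp(\barbell)$ --- where $\sigma$ is the automorphism of $\barbell$ swapping the two factors of \eqref{eq:barbell} --- via equivariance of the isotopy-extension connecting map, together with the observation that $\sigma_*bg$ and $bg$ are inverse loops in $\Embp(\nu(\D^1\sqcup\D^1),\D^4)$ (Remark~\ref{rem:symmetric} plus the time-reversal of the connecting map), is the right mechanism, and the time-reversal computation in your first paragraph is correct.

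What does not hold up is your explanation of the nullhomotopy hypothesis. Once the identity $\sigma\circ\amb(bg)\circ\sigma^{-1}=\amb(bg)^{-1}$ is established in $\pi_0\Diffp(\barbell)$ --- a statement entirely internal to the model barbell --- conjugating by any embedding $\barbemb$ and extending by the identity gives $\bg(\barbemb)^{-1}=\bg(\barbemb\circ\sigma)$ with no further input: $\barbemb\circ\sigma$ is a barbell for any $\barbemb$, the complement $\M\sm\barbemb(\barbell)$ is the same for both, and the swap $\sigma$ and the conjugation take place inside $\barbemb(\barbell)$, not in $\M$. So nothing about the homotopy classes of the cuffs in $\M$ enters, and the claim that the hypothesis is needed for ``the two sides of \eqref{eq:barbell} to match up after implantation'' is not correct --- they always match up. The nullhomotopy hypothesis is carried over from the statement in Budney--Gabai, whose proof takes a different route; your write-up should either identify where it genuinely bites in your argument, or state honestly that the approach via Remark~\ref{rem:symmetric} does not appear to use it. A secondary point: Remark~\ref{rem:symmetric} is itself only asserted ``analogously to Lemma~\ref{lem:symmetric}'' in the paper; your proposed alternative --- realising the symmetry by an explicit isotopy of families inside $\barbemb(\barbell)$ and running the Dax-style bookkeeping --- is in effect a sketch of the missing proof of that remark for arcs, and would be worth carrying out.
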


\subsection{Relation to parameterised surgery and graspers}
\label{subsec:bX-ps}
Barbells are related to graspers as we have seen in Remark~\ref{rem:TG*}. Moreover, $\amb$ is defined via an ambient isotopy extension, so one expects a relation between barbell diffeomorphisms and parameterised surgery map from \eqref{eq-def:ps-S}:
\[
\begin{tikzcd}[column sep=1.8cm]
    \ps_{\nu S_1}\colon \pi_1(\Emb(\nu\S^1,\M_{\nu S_1});\nu c) \rar{\delta_{\nu c}} & \pi_0\Diffp(\M\sm\nu S_1) \rar{-\cup\Id_{\nu S_1}} & \pi_0\Diffp(\M),
\end{tikzcd}
\]
where $\M_{\nu S_1}\coloneqq (M\sm\nu S_1)\cup\nu c$.
This is indeed the case as we show now; see also~\cite[Lem.9]{FGHK}.

\begin{lem}\label{lem:grasper-barbell}
    If $\TG$ is a general grasper in $M_{\nu S_1}$ on $\nu c$, such that the root ball is simple, then the diffeomorphism $\delta_{\nu c}(\prealmap^{\TG})$ agrees with the barbell diffeomorphism $\bg(\barbemb_\TG)$ for the barbell $\barbemb_{\TG}$ obtained from $\TG$ by removing the intersection of its root ball with $\nu c$.
\end{lem}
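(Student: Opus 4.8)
The plan is to unwind the definitions of both sides of the claimed equality and observe that they are literally built from the same ambient isotopy extension, just packaged differently. The key observation, already essentially set up in Remark~\ref{rem:TG*}, is that a general grasper $\TG$ with simple root ball in $\M_{\nu S_1}$ on $\nu c$ has the property that $\nu(\wh{R}\sqcup L\cup\text{bar})\cong\S^2\times\D^2\,\natural\,\S^2\times\D^2=\barbell$, with the subtlety that one of the two $\S^2\times\D^2$ summands is the thickened root ball together with the arc $\nu\wh R\cap\nu c$ that it cuts out of $\nu c$. Removing that arc is precisely the operation that turns the grasper into the barbell $\barbemb_\TG\colon\barbell\hra \M_{\nu S_1}\sm\nu c=\M\sm\nu S_1$, and — this is the point — the grasper family $\prealmap^{\TG}$ of circles in $\M_{\nu S_1}$ is, on a neighbourhood of the root point $p\in\S^1$, exactly $\TG\circ bg\circ\TG^{-1}$, where $bg\in\pi_1(\Emb_\partial(\nu\D^1,\S^2\times\D^2);u)$ is the standard twirling loop of arcs.

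First I would recall the definition of $\delta_{\nu c}(\prealmap^{\TG})$: lift the loop $\prealmap^{\TG}$ of framed circles in $\M_{\nu S_1}$ to a path $F_t$ of diffeomorphisms of $\M_{\nu S_1}$ with $F_0=\Id$ and $F_t\circ\nu c=\prealmap^{\TG}_t$, then restrict $F_1$ to $\M_{\nu S_1}\sm\nu c=\M\sm\nu S_1$. Since $\prealmap^{\TG}$ is the constant family outside a neighbourhood of the root point, we may choose $F_t$ supported inside (a collar of) the thickened grasper $\TG$; concretely $F_t$ can be taken to be $\TG\circ(\widetilde{bg})_t\circ\TG^{-1}$ on $\TG$ and the identity elsewhere, where $(\widetilde{bg})_t$ is the model ambient isotopy extension of $bg$ in $\D^4$ restricted to $\S^2\times\D^2\,\natural\,\S^2\times\D^2$ used to define the barbell map $\amb(bg)$. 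Next I would recall the definition of $\bg(\barbemb_\TG)$: it is $\amb(bg)$ implanted via $\barbemb_\TG$ and extended by the identity on the complement of $\barbemb_\TG(\barbell)$. Comparing the two descriptions, both diffeomorphisms of $\M$ agree: they are $\TG\circ\amb(bg)\circ\TG^{-1}$ on the image of the grasper (equivalently the image of $\barbemb_\TG$, since these images differ only by the arc $\nu\wh R\cap\nu c$, on which the identification is canonical once we remember that $F_1$ fixes $\nu c$ pointwise), and the identity everywhere else. Finally one checks the framings match: the stationary framing convention on $\prealmap^{\TG}$ (Section~\ref{subsec:framed}) corresponds precisely to the standard framed model $bg$ used in the definition of the barbell map (Remark~\ref{rem:symmetric} gives the compatibility of the two symmetric models of $bg$).

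The main obstacle is bookkeeping rather than conceptual: one must be careful that the isotopy extension $F_t$ used in $\delta_{\nu c}$ genuinely fixes $\nu c$ at the endpoint (so that restricting to the complement makes sense and gives a well-defined mapping class), and that this endpoint condition $F_1(\nu c)=\nu c$, together with the constancy of $\prealmap^{\TG}$ outside the root region, lets us choose $F_t$ to agree with the canonical barbell ambient isotopy on the nose. In other words, the subtle point is identifying the ``disk removed from $\nu c$'' inside the grasper with the ``$\{pt\}\times\D^2$'' slice of one $\S^2\times\D^2$ summand of $\barbell$ compatibly with both the family of circles and the extension by the identity; once that identification is fixed the two sides are the same diffeomorphism by construction. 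I would therefore spend most of the proof making Remark~\ref{rem:TG*} precise enough that the two ambient isotopy extensions are manifestly equal, and then simply invoke that $\bg(\barbemb_\TG)$ and $\delta_{\nu c}(\prealmap^{\TG})$ are both the extension-by-identity of $\TG\circ\amb(bg)\circ\TG^{-1}$.
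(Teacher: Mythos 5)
Your proposal is correct and follows essentially the same approach as the paper: the paper also reduces the claim to the compatibility of ambient isotopy extension with the embedding $\TG$ (packaged there as a commutative square, with the locality of vector-field integration cited as the reason the square commutes), which is exactly the content of your explicit choice $F_t=\TG\circ(\widetilde{bg})_t\circ\TG^{-1}$ supported on the image of the grasper. Your added remarks on the framing convention and on identifying the removed arc $\nu\wh R\cap\nu c$ with a $\{pt\}\times\D^2$ slice of $\barbell$ are not spelled out in the paper's proof, but they are consistent with it and are the right bookkeeping points to flag.
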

\begin{proof}
    The idea of the proof is that ambient isotopy extension can be exchanged with an embedding. 
    On one hand, $\bg(\barbemb_\TG)$ is by definition the image of $bg$ under the map
    \[\begin{tikzcd}
        \pi_1(\Embp(\nu\D^1,\D^4\sm\nu u_2);\nu u_1)\rar{\amb} & 
        \pi_0 \Diffp(\barbell)\ar{rrr}{(\barbemb_\TG)_*\cup\Id_{(\M\sm\nu S_1)\sm\barbemb_\TG(\barbell)}} &&&
        \pi_0\Diffp(\M\sm\nu S_1).
    \end{tikzcd}
    \]
    On the other hand, by Remark~\ref{rem:TG*} we have $\prealmap^{\TG_\barbemb}=\TG_*(bg)$, so we can consider the square
    \[\begin{tikzcd}
    \pi_1(\Embp(\nu\D^1,\D^4\sm\nu u_2);\nu u_1)\rar{\amb}\dar{\TG_*} &
    \pi_0 \Diffp(\barbell)\dar{(\barbemb_\TG)_*\cup\Id_{(\M\sm\nu S_1)\sm\barbemb_\TG(\barbell)}}\\
    \pi_1(\Emb(\nu\S^1,\M_{\nu S_1});\nu c) \rar{\delta_{\nu c}} 
    & \pi_0\Diffp(\M\sm\nu S_1) 
    \end{tikzcd}
\]
    This commutes because both $\amb$ and $\delta_{\nu c}$ are defined by ambient isotopy extension, and this depends on integration of vector fields: this is done locally, so can be performed in $\M\sm\nu S_1$ by embedding the result of doing it in $\barbell$. This proves the desired result.
\end{proof}

\begin{thm}\label{thm:BG}
    For any 4-manifold $\M$ and an embedding $\barbemb\colon \barbell\hra \M$ we have
    \[
        \bg({\barbemb}) = \ps_{\nu \barbemb S_1}(\prealmap^{\TG_\barbemb})
    \]
    for the following general grasper $\TG_\barbemb\subset\M_{\nu\barbemb S_1}$ on $\nu c$. Its simple root ball $\nu\wh{R}$ is the union of $\nu c|_{\nu p}$ and $\nu S_1$, its leaf is $S_2\subseteq\M_{\nu\barbemb S_1}$, and its bar is the same as the bar for $\barbemb$ in $\M$.
\end{thm}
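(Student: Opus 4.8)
The plan is to reduce the statement to Lemma~\ref{lem:grasper-barbell} by inserting the surgery on $\nu\barbemb S_1$. First I would set up notation: let $S_1 = \barbemb S_1$ and $S_2 = \barbemb S_2$ be the two cuffs, let $c = \barbemb c_1$ be the boundary of the meridian disk to $S_1$ coming from the identification~\eqref{eq:barbell}, and recall that $\M_{\nu S_1} = (\M\sm\nu S_1)\cup\nu c$ where $\nu c\cong\S^1\times\D^3$. The key geometric observation is that surgery on $S_1$ trades the meridian disk $\{pt\}\times\D^2$ for a 3-ball: the disk $c_1$ in the model barbell, which had half of its boundary on $u_1$ (equivalently on $S_1$ after the identification~\eqref{eq:barbell}), becomes, after surgering out $\nu S_1$, part of a root 3-ball $\wh R$ for the circle $c$ that intersects $c$ in exactly one point. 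This is precisely the claim that $\nu\wh R$ is the union of $\nu c|_{\nu p}$ and $\nu S_1$, and it is a local statement about the handle structure of $\barbell = \S^2\times\D^2\,\natural\,\S^2\times\D^2$ near the first summand.

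The main step is then the following: the barbell $\barbemb\colon\barbell\hra\M$ gives rise, after the surgery on $\nu S_1$, to a general grasper $\TG_\barbemb$ in $\M_{\nu S_1}$ on $\nu c$ with simple root ball $\wh R$ as above, leaf sphere the image of $S_2$ (which survives into $\M_{\nu S_1}$ since $S_2$ is disjoint from $\nu S_1$), and bar the image of the barbell bar. I would verify that, conversely, $\TG_\barbemb$ satisfies the hypotheses of Remark~\ref{rem:TG*}: indeed removing $\nu\wh R\cap c$ from $\TG_\barbemb$ yields a thickened spine of $S_2$ and the bar together with what remains of $\nu\wh R$, which is diffeomorphic to $\S^2\times\D^2$, so $\TG_\barbemb\sm(\nu\wh R\cap c)$ is a barbell in $\M_{\nu S_1}$ — and under the ``undo the surgery'' identification this is exactly $\barbemb$ again (the cuff $S_1$ was absorbed into the root ball). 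With this identification in place, Lemma~\ref{lem:grasper-barbell} applies verbatim: it gives $\delta_{\nu c}(\prealmap^{\TG_\barbemb}) = \bg(\barbemb_{\TG_\barbemb})$ as diffeomorphisms of $\M\sm\nu S_1 = \M_{\nu S_1}\sm\nu c$, where $\barbemb_{\TG_\barbemb}$ is the barbell in $\M\sm\nu S_1$ obtained by deleting the root intersection. Extending both sides by the identity over $\nu S_1$ (on the left this is the map $-\cup\Id_{\nu\barbemb S_1}$ appearing in the definition of $\ps_{\nu\barbemb S_1}$, on the right it reconstitutes $\bg(\barbemb)$ on all of $\M$ since $\bg$ is by definition the identity off the barbell), we obtain $\ps_{\nu\barbemb S_1}(\prealmap^{\TG_\barbemb}) = \bg(\barbemb)$, as desired.

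The point I expect to be the main obstacle is the bookkeeping in the identification of the barbell $\barbemb$ with $\barbemb_{\TG_\barbemb}$ after the surgery — specifically, checking that deleting $\nu S_1$ and re-gluing $\nu c$ does not change the ambient isotopy class of the pair (barbell, complement) away from $S_1$, and that the framings match. This requires being careful that the disk $c_1\subset\barbell$ genuinely becomes a root ball with a single transverse intersection with $c$ after surgery (as opposed to, say, a collection of arcs), which is where the ``simple root'' hypothesis is used and where the local model~\eqref{eq:barbell} for the first $\S^2\times\D^2$ summand does the work. Once that identification is pinned down, the rest is a formal consequence of Lemma~\ref{lem:grasper-barbell} and the definitions of $\ps_{\nu S}$ and $\bg$. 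A remark afterward should record that when $S_1$ is moreover unknotted, $\M_{\nu S_1}\cong\M\#\S^1\times\S^3$ and $\TG_\barbemb$ becomes a grasper to which Theorem~\ref{thm:simple-null-graspers} (when $S_2$ is additionally nullhomotopic) or Proposition~\ref{prop:half-unknotted} can be applied to extract an explicit element of $\Z[\pi_1\M_{\nu S_1}]$, recovering Corollaries~\ref{cor-intro:BG} and~\ref{cor-intro:nullhtpic-barbell}.
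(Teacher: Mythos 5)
Your proposal is correct and follows essentially the same route as the paper: both reduce to Lemma~\ref{lem:grasper-barbell} via the observation that removing $\nu\wh R\cap\nu c$ from the root ball of $\TG_\barbemb$ leaves precisely $\nu S_1$, so the barbell $\barbemb_{\TG_\barbemb}$ in $\M\sm\nu S_1$ extends by $\nu S_1$ to recover $\barbemb$, and then both sides are glued with the identity over $\nu S_1$. The extra care you flag about framings and the local model near the first $\S^2\times\D^2$ summand is sensible bookkeeping, but the paper treats it as immediate from the construction and does not dwell on it.
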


\begin{proof}
    The map $\ps_{\nu \barbemb S_1}(\prealmap^{\TG_\barbemb})$ is obtained from $\delta_{\nu c}(\prealmap^{\TG_\barbemb})$ by extending by the identity over $\nu S_1$. By Lemma~\ref{lem:grasper-barbell} we have $\delta_{\nu c}(\prealmap^{\TG_\barbemb})=\bg(\barbemb_{\TG_\barbemb})$ in $\M\sm\nu S_1$ for the barbell $\barbemb_{\TG_\barbemb}$ obtained from $\TG_\barbemb$ by removing the intersection of its root ball with $\nu c$. Thus, one of the cuffs of $\barbemb_{\TG_\barbemb}$ is $S_2$ and the other one is precisely $S_1$: when we remove $\nu\wh{R}\cap\nu c$ what remains is $\nu S_1$.
    Therefore, once we put $\nu S_1$ back this barbell is the same as $\barbemb$.
\end{proof}

If $\barbemb S_2$ is nullhomotopic in $\M\sm\nu\barbemb S_1$, it is in $X\coloneqq \M_{\nu\barbemb S_1}$ as well, so the corresponding $\TG_\barbemb$ is an example of a simple-null grasper in $\X$, as studied in Section~\ref{subsec:simple-null-graspers}.
The following is a consequence of Theorems~\ref{thm:simple-null-graspers} and~\ref{thm:BG}.
\begin{cor}\label{cor:implant-formula}
    Let $\M$ be any 4-manifold and $\barbemb\colon \barbell\hra \M$ a barbell with $L\coloneqq\barbemb S_2$ nullhomotopic in $\M\sm\barbemb S_1$. Let $\W\in\pi_1(\M\sm\nu\barbemb(S_1\sqcup S_2))$ be the bar word.
    Then
    \[
        \bg({\barbemb}) 
        = \ps_{\nu \barbemb S_1}\circ\prealmap^{\TG_{\W L}} 
        = \ps_{\nu \barbemb S_1}\circ\prealmap\circ(i_L)_*
    \Big(
        \W\dax(\wh{\wh{L}})\W^{-1}
    -\lambdabar(\W\wh{\wh{L}},\W)
    -\ol{\lambdabar(\W\wh{\wh{L}},\W)}
    \Big).
    \]
    for any ball $\wh{L}$, $\partial\wh{L}=L$.
    Moreover, if $\wh{L}$ is embedded (i.e.\ $L$ is unknotted), the first term vanishes.
\end{cor}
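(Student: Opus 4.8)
The plan is simply to chain Theorem~\ref{thm:BG} with Theorem~\ref{thm:simple-null-graspers}; the only genuine work is to recognise that the general grasper produced by the former satisfies the hypotheses of the latter once everything is transported across the surgery on $\barbemb S_1$.

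First I would apply Theorem~\ref{thm:BG} to write $\bg(\barbemb) = \ps_{\nu \barbemb S_1}(\prealmap^{\TG_\barbemb})$, where $\TG_\barbemb$ is the general grasper in $\X\coloneqq\M_{\nu\barbemb S_1}$ on $\nu c$ with simple root ball $\nu\wh{R} = \nu c|_{\nu p}\cup\nu S_1$, with leaf the sphere $L = \barbemb S_2$, and with bar the bar of $\barbemb$. Since the cuff $\barbemb S_2$ is disjoint from a meridian circle $c = \barbemb c_1$ of $\barbemb S_1$, the leaf $L$ lies in $\X\sm\nu c$, so $\TG_\barbemb$ is indeed a general grasper with simple root ball in the sense of Definitions~\ref{def:general-grasper} and~\ref{def:root-leaf}.

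Next I would transport the hypotheses across the surgery on $\barbemb S_1$. Since $\M\sm\barbemb S_1$ sits inside $\X$ (it deformation retracts onto $\M\sm\nu\barbemb S_1\subset\X$), a nullhomotopy $\wh{L}\colon\D^3\to\M\sm\barbemb S_1$ of $L$ is in particular a nullhomotopy of $L$ in $\X$; hence $L$ is nullhomotopic in $\X$ and $\TG_\barbemb$ is a simple-null grasper on $\nu c$ in the sense of Section~\ref{subsec:simple-null-graspers}. Moreover $\X\sm\nu L = (\M\sm\nu\barbemb(S_1\sqcup S_2))\cup\nu c$ is obtained from $\M\sm\nu\barbemb(S_1\sqcup S_2)$ by gluing $\nu c\cong\S^1\times\D^3$ along $\S^1\times\S^2$, so van Kampen gives an isomorphism $\pi_1(\M\sm\nu\barbemb(S_1\sqcup S_2))\cong\pi_1(\X\sm\nu L)$ (the amalgamation is over a $\Z$ that maps isomorphically). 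Under this isomorphism the bar word $\W$ of $\barbemb$ is exactly the bar word of $\TG_\barbemb$ in the sense of Definition~\ref{def:bar-word-and-element}, so $\TG_\barbemb = \TG_{\W L}$, with its canonical framing as in Section~\ref{subsec:framed}.

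Finally, Theorem~\ref{thm:simple-null-graspers} applied to $\TG_{\W L}$ --- with any choice of $\wh{L}$, $\partial\wh{L} = L$, and the resulting $\wh{\wh{L}}\colon\S^3\to\X_{\nu L}$ --- gives
\[
\prealmap^{\TG_{\W L}} = \prealmap\circ(i_L)_*\Big(\W\dax(\wh{\wh{L}})\W^{-1} - \lambdabar(\W\wh{\wh{L}},\W) - \ol{\lambdabar(\W\wh{\wh{L}},\W)}\Big),
\]
and precomposing with $\ps_{\nu\barbemb S_1}$ yields the displayed formula; the final clause --- vanishing of the first term when $\wh{L}$ can be taken embedded, i.e.\ $L$ unknotted --- is immediate from the corresponding assertion of Theorem~\ref{thm:simple-null-graspers}. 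I expect the one step requiring care to be exactly this bookkeeping of the two nested surgeries, on $\barbemb S_1$ and then on $L$: keeping straight which fundamental group each of $\W$, $i_L$ and $\wh{\wh{L}}$ lives in, so that the composite $(i_L)_*$ and the equivariant invariants $\dax$, $\lambdabar$ are read off in the correct group ring. Nothing here is deep, but this is where a stray conjugation or orientation sign could slip in.
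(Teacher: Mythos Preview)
Your proposal is correct and follows exactly the approach indicated in the paper: the corollary is stated there as a direct consequence of Theorems~\ref{thm:simple-null-graspers} and~\ref{thm:BG}, with only the one-line observation that nullhomotopy of $\barbemb S_2$ in $\M\sm\nu\barbemb S_1$ persists to $\X=\M_{\nu\barbemb S_1}$ so that $\TG_\barbemb$ is simple-null. Your additional bookkeeping (the van Kampen identification of $\pi_1(\X\sm\nu L)$ with $\pi_1(\M\sm\nu\barbemb(S_1\sqcup S_2))$ and the matching of bar words) is more explicit than what the paper spells out, but it is precisely the routine verification the paper leaves implicit.
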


\section{Examples}
\label{sec:examples}

\subsection{Half-unknotted barbells}
\label{subsec:barbells-half-unknotted}

Let us now assume a barbell has the cuff $L=\barbemb S_2$ unknotted, that is, $L$ bounds an embedded 3-ball $\wh{L}$ in $\M$. Let us write $\X\coloneqq \M_{\nu\barbemb S_1}$ and $\X_{\nu L}\coloneqq \M_{\nu\barbemb (S_1\sqcup S_2)}$. We have
\begin{align*}
    \pi_1\X & \cong \pi_1(\M\sm\nu\barbemb S_1),\\
    \pi_1\X_{\nu L} & \cong\pi_1(\X\sm\nu L) \cong\pi_1(\M\sm\nu\barbemb(S_1\sqcup S_2))\cong\pi_1(\M\sm\nu\barbemb S_1)\ast\Z.
\end{align*}
The generator of the free factor in $\pi_1(\X\sm\nu L)$ is $\by=[\barbemb(c_2)]$, and the map $i_L\colon\pi_1(\X\sm\nu L)\sra\pi_1\X$ sends it to $1$. Recall the bar word $\W$ and bar group element $\bw=i_L(\W)$ from Definition~\ref{def:bar-word-and-element}.

\begin{prop}\label{prop:half-unknotted}
    Let $\M$ be any 4-manifold and $\barbemb\colon \barbell\hra \M$ a barbell with unknotted cuff $L=\barbemb S_2$. 
    Write the bar word as
    \[
    \W
    =\prod_{i=1}^r
    f_i\by^{\e_i}h_i\in\pi_1(\M\sm\nu\barbemb S_1)\ast\Z,
    \]
    for $f_i,h_i\in\pi_1(\M\sm\nu\barbemb S_1)$ and $\e_i\in\{-1,1\}$. Denote ${\bw}_i=\prod_{j=1}^i
    f_jh_j$, and note that $\bw_r=\bw$ is the bar group element.
    Then the corresponding barbell diffeomorphism $\bg(\barbemb)\in\pi_0\Diffp(\M)$ satisfies
    \[
        \bg(\barbemb)=\ps_{\nu \barbemb S_1}\circ
        \sref^\circlearrowright
        \Big(
            \bw\sum_{i=1}^r\e_if_i^{-1}{\bw}_{i-1}^{-1}
        \Big).
    \]
\end{prop}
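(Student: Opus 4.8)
The plan is to combine Corollary~\ref{cor:implant-formula} with Theorem~\ref{thm:simple-null-graspers} and then to identify the resulting element of $\Z[\pi_1\X_{\nu L}]$ with the semisimple grasper family via Theorem~\ref{thm:selfref}. Concretely, since $L=\barbemb S_2$ is unknotted it bounds an embedded ball $\wh L\subset\M$, so the first term $\W\dax(\wh{\wh L})\W^{-1}$ in Corollary~\ref{cor:implant-formula} vanishes, and we are left with
\[
    \bg(\barbemb)=\ps_{\nu\barbemb S_1}\circ\prealmap\circ(i_L)_*\big(-\lambdabar(\W\wh{\wh L},\W)-\ol{\lambdabar(\W\wh{\wh L},\W)}\big).
\]
So the whole proposition reduces to a computation of the reduced equivariant intersection number $\lambdabar(\W\wh{\wh L},\W)\in\Z[\pi_1\X_{\nu L}\sm1]$ and then applying $(i_L)_*$, which kills $\by$ (i.e.\ sets $\by=1$). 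Here $\wh{\wh L}\colon\S^3\to\X_{\nu L}$ is the sphere obtained by capping $\wh L$ with the core $\D^3\times\{pt\}$ of the surgery solid torus, and $\W\in\pi_1(\X\sm\nu L)$ is represented by a loop traversing the bar of $\barbemb$ (then the whisker back).

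First I would set up the picture for computing $\lambdabar(\W\wh{\wh L},\W)$. The class $\W\wh{\wh L}\in\pi_3\X_{\nu L}$ is represented by the 3-sphere $\wh{\wh L}$ pushed around the loop $\W$; a loop representing $\W$ that intersects it transversely is precisely (a based representative of) the bar. Since the bar word is $\W=\prod_{i=1}^r f_i\by^{\e_i}h_i$ with $f_i,h_i\in\pi_1(\M\sm\nu\barbemb S_1)=\pi_1\X$, the factors $\by^{\e_i}$ are exactly the points where the bar meets a meridian disk of $\barbemb S_2$, hence where the representative of $\W$ crosses $\wh{\wh L}$ (one transverse point per occurrence of $\by^{\pm1}$, with sign $\e_i$). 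I would then read off the double-point loop at the $i$-th crossing: following the bar from the basepoint to the $i$-th meridian crossing contributes $f_i$ (the prefix before $\by^{\e_i}$), and one must also account for the $\W$-translate in $\W\wh{\wh L}$, which contributes the factor $\bw=\bw_r$ out front; then returning along $\wh{\wh L}$ (equivalently along the bar run backwards, after the crossing, which contributes $\bw_{i-1}^{-1}=(\prod_{j=1}^{i-1}f_jh_j)^{-1}$, reading off that the portion of $\wh{\wh L}$ between consecutive crossings carries the partial product). This is precisely the bookkeeping that yields the $i$-th term $\e_i\,\bw f_i^{-1}\bw_{i-1}^{-1}$, and summing over $i$ gives $\lambda(\W\wh{\wh L},\W)=\bw\sum_{i=1}^r\e_i f_i^{-1}\bw_{i-1}^{-1}$ up to the coefficient at $1$.

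Next I would apply $(i_L)_*$: since $i_L$ sends $\by\mapsto1$ and is the identity on $\pi_1\X$, and since each $f_i,h_i,\bw_i\in\pi_1\X$, the element $\lambdabar(\W\wh{\wh L},\W)$ is already a linear combination of elements of $\pi_1\X$ (its $\by$-content having been the only obstruction), so $(i_L)_*$ acts as the identity on it, and I get $\prealmap\big(-\lambdabar(\W\wh{\wh L},\W)-\ol{\lambdabar(\W\wh{\wh L},\W)}\big)=-\prealmap(h+h^{-1})$ with $h=\bw\sum_i\e_if_i^{-1}\bw_{i-1}^{-1}$, interpreting $h\in\pi_1\X\ast\Z=\pi_1\X_{\nu L}$ with $\by=t$. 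Wait — one must be careful about the sign and about whether the ambient intersection number should be taken in $\X$ or $\X_{\nu L}$, and about whether $\prealmap$ is being applied in $\Z[\pi_1\X]$ or $\Z[\pi_1\X_{\nu L}]$. Using Theorem~\ref{thm:selfref}, $\prealmap(h+h^{-1})=\sref^\circlearrowright(h)$, and comparing signs/orientation conventions between the twirl in $\sref^\circlearrowright$ and the minus sign coming from $-\lambdabar$ absorbs into the choice $\by^{\e_i}$ versus its inverse; a final check that $\bw_r=\bw$ gives the stated formula $\bg(\barbemb)=\ps_{\nu\barbemb S_1}\circ\sref^\circlearrowright\big(\bw\sum_{i=1}^r\e_i f_i^{-1}\bw_{i-1}^{-1}\big)$.

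The main obstacle I expect is the orientation/sign bookkeeping in the double-point loop computation: correctly identifying which prefix of the bar word appears and in which position (left versus right of $f_i^{-1}$), getting the overall left factor $\bw$ right, and matching the sign conventions of $\lambda$ (the "outward normal first" meridian orientation, the sign rule $dA(\R^3)\oplus d\gamma(\R)$) against the twirling convention used to define $\sref^\circlearrowright$ and the minus signs in Corollary~\ref{cor:implant-formula}. A clean way to control this is to first verify the formula in the simplest case $r=1$, $\W=\by$ (so $h=t$, recovering $\sref^\circlearrowright(t)$ and the barbell $\barbemb_{\by\bx}$), then $r=1$ with a general $\W=f_1\by h_1$, and only then do the general $r$ by an inductive/additivity argument on the bar word, using that concatenating bar segments adds the corresponding intersection contributions (additivity of $\lambda$ under homotopies / band sums of the guiding arc, cf.\ Remark~\ref{rem:half-twist} and the splitting-leaf move).
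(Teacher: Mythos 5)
Your approach is essentially the paper's: pick the embedded ball $\wh L$ so the $\dax$ term drops, reduce via Corollary~\ref{cor:implant-formula} to computing $h=(i_L)_*\lambdabar(\W\wh{\wh L},\W)$, identify the intersections with the occurrences of $\by^{\pm1}$ in $\W$, and convert $\prealmap(-h-h^{-1})$ to $\sref^\circlearrowright(-h)$ using Theorem~\ref{thm:selfref}. The strategy and all the ingredients match.

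That said, the part you flag as "the main obstacle" is exactly the part you leave unresolved, and your description of the double-point loop is off. Following the bar from the basepoint to the $i$-th crossing does \emph{not} contribute just $f_i$; it contributes the full prefix $\bigl(\prod_{j=1}^{i-1}f_j\by^{\e_j}h_j\bigr)f_i$, and the $f_i^{-1}$ in the answer arises because the double-point loop travels along $\W$ on the pushed-off sheet $\W\wh{\wh L}$ and then back along the \emph{inverse} of that prefix on $\W$, i.e.\ it is $\W\cdot\bigl((\prod_{j=1}^{i-1}f_j\by^{\e_j}h_j)f_i\bigr)^{-1}=\W f_i^{-1}(\prod_{j<i}f_j\by^{\e_j}h_j)^{-1}$, which after $(i_L)_*$ becomes $\bw f_i^{-1}\bw_{i-1}^{-1}$. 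Your narrative produces $\bw f_i\bw_{i-1}^{-1}$, not $\bw f_i^{-1}\bw_{i-1}^{-1}$, even though you write the correct final expression. Likewise the sign cannot be waved away: one must actually check that the local intersection sign at the $i$-th crossing is $-\e_i$, which the paper does by showing $\lambda(\wh L,\by)=-1$ from the "outward normal first" orientation convention for meridians versus the $dA(\R^3)\oplus d\gamma(\R)$ sign rule for $\lambda$. Also a small slip at the end: after $(i_L)_*$ the element $h$ lives in $\Z[\pi_1\X]$ with no $\by$ left, and $t$ plays no role here since it is $\barbemb S_2$, not $\barbemb S_1$, that is assumed unknotted; the phrase "interpreting $h\in\pi_1\X\ast\Z$ with $\by=t$" conflates the two.
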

\begin{proof}
    We can pick for $\wh{L}\colon\D^3\hra \M$ the standard 3-ball bounded by the unknot $L=\barbemb S_2$. Then by Corollary~\ref{cor:implant-formula} we have
       \[
        \bg({\barbemb}) 
        = \ps_{\nu \barbemb S_1}\circ\prealmap^{\TG_{\W L}} 
        = \ps_{\nu \barbemb S_1}\circ\prealmap\circ (i_L)_*
    \Big(
    - \lambdabar(\W\wh{\wh{L}},\W)
    -\ol{ \lambdabar(\W\wh{\wh{L}},\W)}
    \Big).
    \]
    If we let $h= (i_L)_*(\lambdabar(\W\wh{\wh{L}},\W))$, then using Theorem~\ref{thm:selfref} we have
    \[
        \prealmap^{\TG_{\W L}}=\prealmap(- h- h^{-1})=\sref^\circlearrowright(- h),
    \]
    so we need only show that $h=\bw\sum_{i=1}^r-\e_if_i^{-1}{\bw}_{i-1}^{-1}$.

    Indeed, the intersections of $\W \wh{L}$ with $\W$ correspond to the occurrences of $\by^{\pm1}$ in the word $\W$, and the associated group element for $\by^{\e_i}$, for $i=1,\dots,r$, first goes along $\W$ (on $\W \wh{L}$), and then along the inverse of $(\prod_{j=1}^{i-1}
    f_j\by^{\e_j}h_j)f_i$ (on $\W$). Applying $(i_L)_*$ to this we obtain
    \[
    (i_L)_*(\W f_i^{-1}(\prod_{j=1}^{i-1}
    f_j\by^{\e_j}h_j)^{-1})
    =\bw f_i^{-1}(\prod_{j=1}^{i-1}f_jh_j)^{-1}
    =\bw f_i^{-1}\bw_{i-1}^{-1}.
    \]
    Finally, to see that the sign is precisely opposite to $\e_i$ it suffices to show $\lambda(\wh{L},\by)=-1$. Recall that by our orientation convention the circle $\by$ is positively oriented if the orientation  (tangent 2-space of $L$, normal 2-disk to $L$) is positive, whereas the sign for $\lambda(\wh{L},\by)$ is computed using (tangent 3-space of $\wh{L}$, tangent 1-space of $\by$). Since the outward normals of $\by$ and $L$ are parallel but oriented oppositely, we get the claimed sign. 
\end{proof}

\subsection{Unknotted barbells}
\label{subsec:barbells-unknotted}
Let us now assume that a barbell $\barbemb\colon\barbell\hra \M$ has unknotted cuff $\barbemb S_1$. Then $\pi_1(\M\sm\nu\barbemb S_1)\cong\pi_1\M\ast\Z$ is generated by $\pi_1\M$ and $\bx=t\in\Z$. Moreover, as in~\eqref{eq-def:ps} we write
\[
    \ps\coloneqq\ps_{\nu\barbemb S_1}\colon\pi_1(\Emb(\nu\S^1,\M\#\,\S^3\times\S^1);\nu c_1)\to\pi_0\Diff(\M),
\]


\begin{example}\label{ex:bar-element}
    Assume $\barbemb\colon \barbell\hra \M$ is a barbell in a 4-manifold $\M$ that has both cuffs unknotted. Fix some $h\in\pi_1\M\ast\Z$. The following computations are immediate from Proposition~\ref{prop:half-unknotted}.
    \begin{enumerate}
        \item 
            $\bg(\barbemb_{\by h})=\ps\circ\sref^\circlearrowright(h)$ as  $\bw=\bw_1=h$ and $f_1=1$.
            In particular, for $g_1,g_2\in\pi_1M$ we have
            \[
            \bg(\barbemb_{\by (g_1g_2^{-1} \bx g_2)})
            =\ps\circ\sref^\circlearrowright(g_1g_2^{-1}tg_2).
        \]
        \item 
            $\bg(\barbemb_{(\by^{\e_1}h_1)(\by^{\e_2}h_2)})=\ps\circ
        \sref^\circlearrowright(\e_1 h_1h_2+ \e_2 h_1h_2h_1^{-1})$ 
            as $\bw=\bw_2=h_1h_2$ and $\bw_1=h_1$.
        \item 
            $\bg(\barbemb_{(f_1\by^{\e_1}h_1)(\by^{\e_2}h_2)})=\ps\circ
        \sref^\circlearrowright(\e_1 f_1h_1h_2f_1^{-1} + \e_2 f_1h_1h_2(f_1h_1)^{-1})$ 
            as $\bw=\bw_2=f_1h_1h_2$ and $\bw_1=f_1h_1$.
            In particular, 
            \[
            \bg(\barbemb_{(g_2^{-1}\by^{-1}\bx^{-1})(\by \bx g_2g_1g_2^{-1})})
            =\ps\circ\sref^\circlearrowright(-g_1g_2^{-1}g_2+ g_1g_2^{-1}(g_2^{-1} t^{-1})^{-1})
            = \ps\circ\sref^\circlearrowright 
        (g_1g_2^{-1}tg_2-g_1)
        \]
            as $f_1h_1h_2=g_2^{-1}t^{-1}tg_2g_1g_2^{-1}=g_1g_2^{-1}$  and $f_1=g_2^{-1}$ and $f_1h_1=g_2^{-1} t^{-1}$.\qedhere
    \end{enumerate}
\end{example}

We use the last example to directly relate Watanabe's theta classes and Budney--Gabai's barbell diffeomorphisms, completing the proof of Theorem~\ref{thm-intro:W-BG}.
\begin{cor}\label{cor:Wat-implant}
    If a barbell $\barbemb\colon\barbell\hra \M$ in a 4-manifold $\M$ has both cuffs unknotted and the bar word $\W=\by (g_1g_2^{-1} \bx g_2)$, then
    \[
        \bg(\barbemb_{\W})=\wat(\Theta_{g_1,g_2}).
    \]
\end{cor}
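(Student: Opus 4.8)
The plan is to chain together the three results that have already been established, so this corollary is essentially a bookkeeping statement that closes the loop between the two sides of Theorem~\ref{thm-intro:W-BG}. First I would invoke Example~\ref{ex:bar-element}(1) with the specific bar word $\W = \by(g_1 g_2^{-1}\bx g_2)$: since the cuff $\barbemb S_1$ is unknotted we are in the situation of Section~\ref{subsec:barbells-unknotted}, with $\bx = t$ the generator of the free factor of $\pi_1(\M\sm\nu\barbemb S_1)\cong\pi_1\M\ast\Z$, and writing $\W$ in the normal form $\prod f_i\by^{\e_i}h_i$ of Proposition~\ref{prop:half-unknotted} we have $r=1$, $\e_1=1$, $f_1 = 1$, $h_1 = g_1 g_2^{-1} t g_2$. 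Hence $\bw = \bw_1 = g_1 g_2^{-1} t g_2$ and the example gives
\[
    \bg(\barbemb_{\W}) = \ps\circ\sref^\circlearrowright(g_1 g_2^{-1} t g_2).
\]

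Next I would apply Theorem~\ref{thm:selfref}, which identifies all the semisimple grasper families, in particular $\sref^\circlearrowright(h) = \sref^\circlearrowright_\circlearrowright(h)$ for $h = g_1 g_2^{-1} t g_2$. Post-composing with $\ps$ (which is well-defined on $\pi_1(\Emb(\nu\S^1,\M\#\,\S^3\times\S^1);\nu c)$) preserves this equality, so
\[
    \bg(\barbemb_{\W}) = \ps\circ\sref^\circlearrowright_\circlearrowright(g_1 g_2^{-1} t g_2).
\]
Finally, Corollary~\ref{cor:Theta} states precisely that $\wat(\Theta_{g_1,g_2}) = \ps\circ\sref^\circlearrowright_\circlearrowright(g_1 g_2^{-1} t g_2)$, and combining the last three displays yields $\bg(\barbemb_{\W}) = \wat(\Theta_{g_1,g_2})$.

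The computation itself is routine; the only point that requires a little care — and the step I would expect to be the ``main obstacle'' in the sense of needing a sentence of justification rather than a citation — is checking that the identifications are compatible: namely that the element $\bx = t$ appearing in the bar word (the meridian class of the unknotted cuff $\barbemb S_1$, living in $\pi_1(\M\sm\nu\barbemb S_1)$) really is the generator $t$ of the free $\Z$ factor of $\pi_1(\M\#\,\S^3\times\S^1)$ under the surgery identification $\M_{\nu\barbemb S_1}\cong\M\#\,\S^3\times\S^1$, and that the grasper $\TG_\barbemb$ produced by Theorem~\ref{thm:BG} from this particular barbell is, after the splitting-leaf reductions, exactly the semisimple grasper $\sref^\circlearrowright$ with the asserted bar group element — but all of this is exactly what Proposition~\ref{prop:half-unknotted} and Example~\ref{ex:bar-element} already record, so no new argument is needed. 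I would therefore present the proof as a three-line deduction citing Example~\ref{ex:bar-element}(1), Theorem~\ref{thm:selfref}, and Corollary~\ref{cor:Theta} in that order.
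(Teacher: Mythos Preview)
Your proposal is correct and follows essentially the same approach as the paper: cite Example~\ref{ex:bar-element}(1) to get $\bg(\barbemb_{\W})=\ps\circ\sref^\circlearrowright(h)$ with $h=g_1g_2^{-1}tg_2$, then use Theorem~\ref{thm:selfref} and Corollary~\ref{cor:Theta} to match it with $\wat(\Theta_{g_1,g_2})$. The only cosmetic difference is that the paper routes the Theorem~\ref{thm:selfref} step through $\prealmap(h+h^{-1})$ rather than $\sref^\circlearrowright_\circlearrowright(h)$, but since Corollary~\ref{cor:Theta} records both forms this is immaterial.
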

\begin{proof}
    On one hand, recall from Corollary~\ref{cor:Theta} that $\wat(\Theta_{g_1,g_2})=\ps\circ\prealmap(h+h^{-1})$
    for $h=g_1g_2^{-1}tg_2$. On the other hand, in Example~\ref{ex:bar-element}(1) we have computed $\bg(\barbemb_{\W})=\ps\circ\sref^\circlearrowright(h)$ for precisely the same $h$.
    Finally, by Theorem~\ref{thm:selfref} these families agree: $\sref^\circlearrowright(h)=\prealmap(h+h^{-1})$.
\end{proof}

Finally, let us relate the diffeomorphisms of $\M=\D^3\times\S^1$ from \cite[Constr.\ 6.11]{Budney-Gabai} to grasper classes; the following was stated in the introduction as Corollary~\ref{cor-intro:BG}.

\begin{cor}\label{cor:implant-D3xS1}
    Consider $\M=\D^3\times\S^1$ and the barbells $\delta_m$, $m\geq4$, as in Figure~\ref{fig-intro:barbell}(ii). Let $g$ denote the generator of $\pi_1\M\cong\Z$. Then we have
    $\W=g\by(g^{m-3}\bx g^2)$ and $\bw=g^{m-2}tg^2$ and
    \[
        \bg(\delta_m)=
        \ps\circ\sref^\circlearrowright(g^{m-2}tg)
        =\ps\circ\prealmap(g^{m-2}tg+\ol{g^{m-2}tg}).
    \]
\end{cor}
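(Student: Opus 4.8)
The plan is to specialise Proposition~\ref{prop:half-unknotted} to the barbell $\delta_m\colon\barbell\hra\D^3\times\S^1$, whose spine is drawn in Figure~\ref{fig-intro:barbell}(ii). First I would read off the bar word from the picture. Both cuffs are unknotted, $\pi_1(\M\sm\nu\barbemb(S_1\sqcup S_2))\cong\Z\ast\Z$ is generated by $g$ (the generator of $\pi_1\M\cong\Z$ coming from the $\S^1$-factor, equivalently $\bx=t$ after setting the $S_1$-meridian to be the $\Z$-coordinate of $\M_{\nu\barbemb S_1}$) and by $\by=[\barbemb c_2]$, and the bar in Figure~\ref{fig-intro:barbell}(ii) runs: first around $g$ once, then through the cuff $S_2$ (contributing $\by$), then around $g^{m-3}$, then through the cuff $S_1$ (contributing $\bx=t$), then around $g^2$ back to the start. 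This gives $\W=g\by g^{m-3}\bx g^2=g\by(g^{m-3}\bx g^2)$, exactly as asserted and as stated for $\delta_m$ in the introduction.

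Next I would put this $\W$ into the normal form required by Proposition~\ref{prop:half-unknotted}, namely $\W=\prod_{i=1}^r f_i\by^{\e_i}h_i$ with $f_i,h_i\in\pi_1(\M\sm\nu\barbemb S_1)=\pi_1\M\ast\Z=\langle g,t\rangle$ and $\e_i\in\{\pm1\}$. Here there is a single factor of $\by$, so $r=1$, $\e_1=+1$, $f_1=g$, and $h_1=g^{m-3}tg^2$ (after setting $\bx=t$, which is legitimate since $\barbemb S_1$ is unknotted and we are computing $\bg(\delta_m)=\ps_{\nu\barbemb S_1}(\cdots)$ with $\ps=\ps_{\nu\barbemb S_1}$). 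Then $\bw=\bw_1=f_1h_1=g\cdot g^{m-3}tg^2=g^{m-2}tg^2$, which is the claimed bar group element, and $\bw_0=1$. Proposition~\ref{prop:half-unknotted} now yields
\[
    \bg(\delta_m)=\ps\circ\sref^\circlearrowright\Big(\bw\sum_{i=1}^r\e_if_i^{-1}\bw_{i-1}^{-1}\Big)
    =\ps\circ\sref^\circlearrowright\big(\bw\cdot f_1^{-1}\cdot\bw_0^{-1}\big)
    =\ps\circ\sref^\circlearrowright\big(g^{m-2}tg^2\cdot g^{-1}\big)
    =\ps\circ\sref^\circlearrowright(g^{m-2}tg).
\]
Finally, Theorem~\ref{thm:selfref} gives $\sref^\circlearrowright(h)=\prealmap(h+h^{-1})$ for any $h\in\pi_1\M\ast\Z$, so with $h=g^{m-2}tg$ we get $\bg(\delta_m)=\ps\circ\prealmap\big(g^{m-2}tg+\ol{g^{m-2}tg}\big)$, completing the proof.

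The only genuinely nonroutine point is the first step: correctly extracting the bar word $\W=g\by g^{m-3}\bx g^2$ from Budney--Gabai's picture of $\delta_m$ and matching their conventions (direction of traversal of the bar, which meridian is $\bx$ versus $\by$, and the precise exponents $m-3$ and $2$) with ours from Definition~\ref{def:bar-word-and-element}; this is a bookkeeping matter rather than a conceptual one, and once it is pinned down — as it already is in the statement via the given $\W$ — the rest is a direct substitution into Proposition~\ref{prop:half-unknotted} followed by an application of Theorem~\ref{thm:selfref}. One small thing to be careful about is that the substitution $\bx=t$, $\by=1$ used to pass to the ``bar group element'' picture is exactly the one built into the hypothesis ``$\barbemb S_1$ unknotted'' of Proposition~\ref{prop:half-unknotted} and the identification $\pi_1\M_{\nu\barbemb S_1}\cong\pi_1\M\ast\Z$, so no extra justification is needed.
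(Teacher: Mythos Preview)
Your proof is correct and follows exactly the same approach as the paper: identify $r=1$, $f_1=g$, $h_1=g^{m-3}tg^2$, compute $\bw f_1^{-1}=g^{m-2}tg^2\cdot g^{-1}=g^{m-2}tg$, apply Proposition~\ref{prop:half-unknotted}, then invoke Theorem~\ref{thm:selfref}. One small slip: $\pi_1(\M\sm\nu\barbemb(S_1\sqcup S_2))$ is not $\Z\ast\Z$ but has three generators $g,\bx,\by$; this does not affect the argument since you read off the bar word correctly anyway.
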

\begin{proof}
    We have $f_1=g$, $h_1=g^{m-3}tg^2$, so $\bw f_1^{-1}=g^{m-2}tg^2g^{-1}=g^{m-2}tg$. Thus, Proposition~\ref{prop:half-unknotted} implies the first equality. 
    For the second we use  Theorem~\ref{thm:selfref}.
\end{proof}

\subsection{Barbells in the 4-sphere}
We apply the results of the previous section to the case $\M=\S^4$.

\begin{cor}\label{cor:implant-D4}
    Consider $\M=\S^4$ and an embedding $\barbemb_{\W}\colon\barbell\hra \S^4$ with both cuffs unknotted, and the bar word $\W=\prod_{i=1}^r\by^{\e_i}\bx^{n_i}\in\pi_1(\S^4\sm\nu\barbemb(S_1\sqcup S_2))\cong\Z\ast\Z$. Then the corresponding barbell diffeomorphism $\bg(\barbemb_{\W})\in\pi_0\Diff(\S^4)$ satisfies
    \[
        \bg(\barbemb_{\W})=\ps\circ
        \sref^\circlearrowright
        \Big(
            \sum_{i=1}^r\e_it^{n_i+\dots +n_r}
        \Big).
    \]
\end{cor}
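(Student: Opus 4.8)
The plan is to read the result straight off Proposition~\ref{prop:half-unknotted}. Since $\M=\S^4$ is simply connected, the complement $\S^4\setminus\nu\barbemb S_1$ has $\pi_1\cong\Z$ generated by $\bx=t$, and $\S^4\setminus\nu\barbemb(S_1\sqcup S_2)$ has $\pi_1\cong\Z\ast\Z$ generated by $\bx=t$ and $\by=[\barbemb c_2]$; in particular both cuffs are automatically unknotted, the hypothesis of Proposition~\ref{prop:half-unknotted} applies, and $\ps_{\nu\barbemb S_1}$ is the map $\ps$ of~\eqref{eq-def:ps} for $\M=\S^4$ (surgery on the unknotted $\barbemb S_1$ gives $\S^4\#\,\S^3\times\S^1=\S^3\times\S^1$, whose $\pi_1$-generator is $t$).

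First I would put the given bar word $\W=\prod_{i=1}^r\by^{\e_i}\bx^{n_i}$ into the normal form required by Proposition~\ref{prop:half-unknotted}, namely $\W=\prod_{i=1}^r f_i\by^{\e_i}h_i$ with $f_i,h_i\in\pi_1(\M\setminus\nu\barbemb S_1)$; this holds with $f_i=1$ and $h_i=\bx^{n_i}=t^{n_i}$ for every $i$. Then I would compute the partial products $\bw_i=\prod_{j=1}^i f_jh_j=t^{n_1+\cdots+n_i}$ (so $\bw_0=1$), and in particular the bar group element is $\bw=\bw_r=t^{n_1+\cdots+n_r}$.

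Substituting into the formula of Proposition~\ref{prop:half-unknotted} and collapsing the telescoping exponents gives
\[
\bw\sum_{i=1}^r\e_i f_i^{-1}\bw_{i-1}^{-1}
=\sum_{i=1}^r\e_i\, t^{\,n_1+\cdots+n_r}\,t^{-(n_1+\cdots+n_{i-1})}
=\sum_{i=1}^r\e_i\, t^{\,n_i+n_{i+1}+\cdots+n_r}\ \in\ \Z[\pi_1(\S^3\times\S^1)],
\]
so that $\bg(\barbemb_\W)=\ps\circ\sref^\circlearrowright\big(\sum_{i=1}^r\e_i t^{\,n_i+\cdots+n_r}\big)$, as claimed.

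There is no genuine obstacle here; the only point to handle with care is the bookkeeping of the telescoping sum of exponents, together with the identification $\bx=t$ under $\M_{\nu\barbemb S_1}\cong\S^3\times\S^1$. One may also note that degenerate cases, such as $n_i=0$ (consecutive occurrences of $\by^{\pm1}$ in $\W$), cause no difficulty, since Proposition~\ref{prop:half-unknotted} already treats each letter $\by^{\e_i}$ separately.
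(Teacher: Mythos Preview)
Your proof is correct and follows exactly the same route as the paper: set $f_i=1$, $h_i=\bx^{n_i}$ in Proposition~\ref{prop:half-unknotted} and simplify the exponents. One small slip: the remark that in $\S^4$ ``both cuffs are automatically unknotted'' is not justified---that would be the 4-dimensional Schoenflies problem---but this is harmless here since unknottedness of both cuffs is already part of the hypothesis of the corollary (and is what makes your computations of $\pi_1(\S^4\setminus\nu\barbemb S_1)$ and $\pi_1(\S^4\setminus\nu\barbemb(S_1\sqcup S_2))$ valid).
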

\begin{proof}
        We simply put $f_i=1$ and $h_i=\bx^{n_i}$ into the formula of Proposition~\ref{prop:half-unknotted}, so that $\bw=t^{n_1+\dots+n_r}$ and $\e_if_i^{-1}{\bw}_{i-1}^{-1}=\e_it^{-n_1-\dots-n_{i-1}}$.
\end{proof}


This means that for $\M=\S^4$ it suffices to consider the simplest bar words $\bw=\by\bx^i$, giving 
\[
    \bg(\barbemb_{\by\bx^i})=\ps\circ\sref^\circlearrowright(t^i).
\]
By \eqref{eq:case-S3xS1} we can assume $i\geq1$. Note that $\bg(\barbemb_{\by\bx^i})^{-1}=\bg(\barbemb_{\by^{-1}\bx^i})$ (since this is $\ps\circ\sref^\circlearrowright(-t^i)$).

\begin{example}\label{cor:implant-D4-vx}
    For example, Budney and Gabai conjecture \cite[Conj.\ 5.18]{Budney-Gabai} that for the word $\W=\by\bx$ and $\W=\by\bx^{-1}$ the barbell diffeomorphism is nontrivial. Combining Corollaries~\ref{cor:implant-D4} and ~\ref{cor:selfref} to these cases we have
    \begin{align*}
        \bg(\barbemb_{\by\bx})&=\ps\circ\sref^\circlearrowright(t)=\ps\circ\prealmap(t),\\
        \bg(\barbemb_{\by\bx^{-1}})&=\ps\circ\sref^\circlearrowright(t^{-1})=\ps\circ\prealmap(t).
    \end{align*}
    In particular, the barbell diffeomorphisms $\bg(\barbemb_{\by\bx^{-1}})$ and $\bg(\barbemb_{\by\bx})$ are isotopic.
\end{example}

In fact, Budney and Gabai show in~\cite[Prop.\ 5.17]{Budney-Gabai} that the last example has order two, as well as any barbell diffeomorphism of $\S^4$ with unknotted cuffs and the bar word $w\in\Z\ast\Z$ which is inverse-palindromic, that is, $\ol{w^{-1}}=w$, where the involution $\ol{\cdot}\colon\Z\ast\Z\to\Z\ast\Z$ exchanges the two generators.
The proof is based on the symmetry of the barbell diffeomorphism (see Lemma~\ref{lem:symmetric-barbell}), also used by Gay and Hartman in a similar result~\cite[Lem.\ 10]{Gay-Hartman}. We recast this argument in our language as follows.
    
\begin{lem}\label{lem:2-torsion}
    We have
    \[
        \bg(\barbemb_{\by\bx^i})^{-1}=\bg(\barbemb_{\by\bx})^i.
\]
    In particular, $\bg(\barbemb_{\by\bx})^2=\Id$.
\end{lem}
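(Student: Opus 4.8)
The plan is to recast, in the grasper language, the symmetry argument of Budney--Gabai~\cite[Prop.~5.17]{Budney-Gabai} (also used by Gay--Hartman~\cite[Lem.~10]{Gay-Hartman}). First I would reduce to $i\geq1$ using~\eqref{eq:case-S3xS1}, and record the two translations we need: by Corollary~\ref{cor:implant-D4} (take all $f_j=1$) we have $\bg(\barbemb_{\by\bx^i})=\ps\circ\sref^\circlearrowright(t^i)$, and by Example~\ref{cor:implant-D4-vx} both $\bg(\barbemb_{\by\bx})$ and $\bg(\barbemb_{\by\bx^{-1}})$ equal $D\coloneqq\ps\circ\prealmap(t)$.

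The geometric input is Lemma~\ref{lem:symmetric-barbell}: in $\S^4$ every cuff is nullhomotopic, so for any barbell with unknotted cuffs, exchanging the roles of $S_1$ and $S_2$ inverts the diffeomorphism. On bar words this exchange relabels the two meridians and reverses the bar, i.e.\ sends $w\in\Z\ast\Z$ to $\ol{w^{-1}}$, so $\bg(\barbemb_w)^{-1}=\bg(\barbemb_{\ol{w^{-1}}})$. Applying this to the inverse-palindromic word $w=\by\bx^{-1}$ --- indeed $\ol{w^{-1}}=\ol{\bx\by^{-1}}=\by\bx^{-1}=w$ --- gives $\bg(\barbemb_{\by\bx^{-1}})^{-1}=\bg(\barbemb_{\by\bx^{-1}})$, hence $D^2=\bg(\barbemb_{\by\bx^{-1}})^2=\Id$; this is the ``in particular'' clause.

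For the main equality I would run the same symmetry on $w=\by\bx^{i}$: its image under $w\mapsto\ol{w^{-1}}$ is $\by^{-i}\bx^{-1}$, so $\bg(\barbemb_{\by\bx^i})^{-1}=\bg(\barbemb_{\by^{-i}\bx^{-1}})$. Writing $\by^{-i}\bx^{-1}=\prod_{j=1}^{i}\by^{-1}\bx^{n_j}$ with $n_1=\dots=n_{i-1}=0$ and $n_i=-1$, Corollary~\ref{cor:implant-D4} gives $\bg(\barbemb_{\by^{-i}\bx^{-1}})=\ps\circ\sref^\circlearrowright\big(\sum_{j=1}^{i}(-1)\,t^{-1}\big)=\ps\circ\sref^\circlearrowright(-it^{-1})$. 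By Theorem~\ref{thm:selfref} this equals $\ps\circ\prealmap(-it^{-1}-it)$, and since $t^{-1}\in\ker(\prealmap)$ (by~\eqref{eq:the-formula} with $h=1$, together with $1\in\ker(\prealmap)$) it simplifies to $\ps\circ\prealmap(-it)=D^{-i}$. Combining with $D^2=\Id$ from the previous step yields $\bg(\barbemb_{\by\bx^i})^{-1}=D^{-i}=D^i=\bg(\barbemb_{\by\bx})^i$.

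The step I expect to require the most care is identifying the precise effect of Lemma~\ref{lem:symmetric-barbell} on the bar word, namely that exchanging the cuffs produces $\ol{w^{-1}}$ rather than merely $\ol{w}$; this forces one to track the orientation of the bar and the choice of whisker in Definition~\ref{def:bar-word-and-element}, and it is exactly the ``relabel and invert'' combination that underlies Budney--Gabai's inverse-palindromic criterion. The remaining manipulations --- unwinding Corollary~\ref{cor:implant-D4} on the explicit words $\by\bx^{\pm i}$ and $\by^{-i}\bx^{-1}$, and using that $\prealmap$ (hence $\ps\circ\prealmap$) is a homomorphism on $\Z[t,t^{-1}]/\ker(\prealmap)$ --- are routine.
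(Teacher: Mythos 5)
Your proof is correct and follows the same overall strategy as the paper --- recast the cuff symmetry (Lemma~\ref{lem:symmetric-barbell}) as a transformation on bar words, then compute via Corollary~\ref{cor:implant-D4} and Theorem~\ref{thm:selfref} --- but there is one genuine divergence in the intermediate bar-word computation that is worth flagging. The paper asserts that exchanging the cuffs turns $\by\bx^i$ into $\by^i\bx$ (reverse the letter order, then swap generators), and then obtains $\bg(\barbemb_{\by\bx^i})^{-1}=\ps\circ\sref^\circlearrowright(it)=\bg(\barbemb_{\by\bx})^i$ in one stroke, with $D^2=\Id$ following by putting $i=1$. You instead encode the cuff swap as $w\mapsto\ol{w^{-1}}$, which on $\by\bx^i$ yields $\by^{-i}\bx^{-1}$ and hence $\bg(\barbemb_{\by\bx^i})^{-1}=\ps\circ\sref^\circlearrowright(-it^{-1})=D^{-i}$. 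These two candidate bar words differ by inverting every letter, so the resulting diffeomorphisms are mutually inverse, and your version alone does not immediately give the stated equality $D^{-i}=D^i$. You close the gap by applying the same symmetry to the genuinely inverse-palindromic word $\by\bx^{-1}$ (together with Example~\ref{cor:implant-D4-vx}, which identifies $\bg(\barbemb_{\by\bx^{-1}})$ with $D$) to establish $D^2=\Id$ first, and only then conclude. This extra step is sound and, to your credit, your rule $w\mapsto\ol{w^{-1}}$ is the one compatible with the paper's and Budney--Gabai's definition of ``inverse-palindromic'' (it is the group-theoretic inverse of the reversed bar, as opposed to mere letter-order reversal), so it is arguably the cleaner formulation even though it costs one additional reduction. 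Either way, the computations with Corollary~\ref{cor:implant-D4}, Theorem~\ref{thm:selfref}, and the relations $1,\,t^{-1}\in\ker(\prealmap)$ are all carried out correctly.
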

Recall that $\bg(\barbemb_{\by\bx})=\wat(\Theta)=\ps\circ\prealmap(t)$, so this completes the proof of Corollary~\ref{cor-intro:final-S4}.

\begin{proof}
    By Remark~\ref{rem:symmetric} we can obtain $\bg(\barbemb_{\by\bx^i})^{-1}$ by switching the choice of spheres in the barbell: we twirl a meridian $c_2$ around $S_1$, following the same bar. We now read the bar word backwards and with $\bx$ and $\by$ exchanged, so $\bg(\barbemb_{\by\bx^i})^{-1}=\bg(\barbemb_{\by^i\bx})$.
    By Corollary~\ref{cor:implant-D4} we then have 
\begin{align*}
    \bg(\barbemb_{\by\bx^i})^{-1}
    =\bg(\barbemb_{\by^i\bx})&=\ps\circ\sref^\circlearrowright(it)
    =\ps\circ\prealmap(it+it^{-1})=(\ps\circ\prealmap(t+t^{-1}))^i=(\bg(\barbemb_{\by\bx}))^i.
\end{align*}
For the penultimate equality we use that $\ps\circ\prealmap$ is a group homomorphism.
\end{proof}


   



\printbibliography[heading=bibintoc]

\vspace{10pt}
\hrule

\end{document}